\documentclass[10pt]{amsart}
\usepackage{graphicx} 
\usepackage{xcolor}
\usepackage{amsmath, amssymb, amsthm}
\usepackage{bm}
\usepackage{mathtools}
\usepackage{nicefrac}
\usepackage{stmaryrd}

\usepackage[shortlabels]{enumitem}
\usepackage{aliascnt}
\usepackage[bookmarks=true,pdfstartview=FitH, pdfborder={0 0 0}, colorlinks=true,citecolor=red, linkcolor=blue]{hyperref}

\theoremstyle{plain}
\newtheorem{thm}{Theorem}[section]
\newaliascnt{cor}{thm}
\newaliascnt{prop}{thm}
\newaliascnt{lem}{thm}
\newtheorem{cor}[cor]{Corollary}
\newtheorem{prop}[prop]{Proposition}
\newtheorem{lem}[lem]{Lemma}
\aliascntresetthe{cor}
\aliascntresetthe{prop}
\aliascntresetthe{lem}

\theoremstyle{definition}
\newaliascnt{defn}{thm}
\newaliascnt{asu}{thm}
\newaliascnt{con}{thm}
\newtheorem{defn}[defn]{Definition}
\newtheorem{asu}[asu]{Assumption}

\aliascntresetthe{defn}
\aliascntresetthe{asu}
\aliascntresetthe{con}

\theoremstyle{remark}
\newaliascnt{rem}{thm}
\newaliascnt{exa}{thm}
\newaliascnt{masu}{thm}
\newaliascnt{nota}{thm}
\newaliascnt{sett}{thm}
\newtheorem{rem}[rem]{Remark}

\aliascntresetthe{rem}
\aliascntresetthe{exa}
\aliascntresetthe{masu}
\aliascntresetthe{nota}
\aliascntresetthe{sett}

\newcounter{stpi}
\newcounter{stpci}
\newcounter{stpiii}

\numberwithin{equation}{section}
\setlist[enumerate]{font=\normalfont}

\newcommand{\eps}{\varepsilon}

\newcommand{\beps}{\bm{\eps}}
\newcommand{\bsigma}{\bm{\sigma}}
\newcommand{\bu}{\bm{u}}
\newcommand{\bd}{\bm{d}}
\newcommand{\bk}{\bm{k}}
\newcommand{\bx}{\bm{x}}
\newcommand{\by}{\bm{y}}
\newcommand{\bff}{\bm{f}}
\newcommand{\bv}{\bm{v}}

\newcommand{\dbeps}{\dot{\beps}}
\newcommand{\oOmega}{\overline{\Omega}}
\newcommand{\oG}{\overline{G}}
\newcommand{\dOmega}{\del\Omega}
\newcommand{\dG}{\del G}
\newcommand{\tri}{\triangle}

\newcommand{\rC}{\mathrm{C}}
\newcommand{\rL}{\mathrm{L}}
\newcommand{\rW}{\mathrm{W}}
\newcommand{\rH}{\mathrm{H}}
\newcommand{\rB}{\mathrm{B}}
\newcommand{\rD}{\mathrm{D}}
\newcommand{\rN}{\mathrm{N}}
\newcommand{\mri}{\mathrm{i}}
\newcommand{\rE}{\mathrm{E}}
\newcommand{\rI}{\mathrm{I}}
\newcommand{\rII}{\mathrm{II}}
\newcommand{\rIII}{\mathrm{III}}
\newcommand{\rX}{\mathrm{X}}
\newcommand{\rY}{\mathrm{Y}}
\newcommand{\LFI}{\mathrm{LFI}}
\newcommand{\rBUC}{\mathrm{BUC}}
\newcommand{\rh}{\mathrm{h}}
\newcommand{\rA}{\mathrm{A}}
\newcommand{\rd}{\mathrm{d}}
\newcommand{\srd}{\, \rd}
\newcommand{\ra}{\mathrm{a}}
\newcommand{\ro}{\mathrm{o}}
\newcommand{\rb}{\mathrm{b}}
\newcommand{\rR}{\mathrm{R}}
\renewcommand{\rm}{\mathrm{m}}
\newcommand{\avg}{\mathrm{avg}}
\newcommand{\rc}{\mathrm{c}}
\newcommand{\rsh}{\mathrm{sh}}

\newcommand{\rt}{\mathrm{t}}
\newcommand{\crit}{\mathrm{crit}}
\newcommand{\ice}{\mathrm{ice}}
\newcommand{\per}{\mathrm{per}}
\newcommand{\mre}{\mathrm{e}}
\renewcommand{\cor}{\mathrm{cor}}

\newcommand{\bR}{\mathbb{R}}
\newcommand{\bS}{\mathbb{S}}
\newcommand{\bA}{\mathbb{A}}
\newcommand{\bC}{\mathbb{C}}
\newcommand{\bN}{\mathbb{N}}
\newcommand{\bE}{\mathbb{E}}

\newcommand{\cA}{\mathcal{A}}
\newcommand{\cL}{\mathcal{L}}
\newcommand{\cB}{\mathcal{B}}
\newcommand{\cR}{\mathcal{R}}
\newcommand{\cH}{\mathcal{H}}

\newcommand{\cE}{\mathcal{E}}

\newcommand{\Hinfty}{\cH^\infty}

\newcommand{\tP}{\Tilde{P}}
\newcommand{\tS}{\Tilde{S}}

\newcommand{\tbv}{\Tilde{\bv}}
\renewcommand{\th}{\Tilde{h}}
\newcommand{\tA}{\Tilde{A}}

\newcommand{\tbA}{\Tilde{\bA}}
\newcommand{\tB}{\Tilde{B}}
\newcommand{\tbu}{\Tilde{\bu}}

\newcommand{\tin}{\enspace \text{in} \enspace}
\newcommand{\ton}{\enspace \text{on} \enspace}
\newcommand{\tfor}{\enspace \text{for} \enspace}

\newcommand{\tand}{\enspace \text{and} \enspace}
\newcommand{\tif}{\enspace \text{if} \enspace}
\newcommand{\twith}{\enspace \text{with} \enspace}

\newcommand{\taswellas}{\enspace \text{as well as} \enspace}
\newcommand{\twhere}{\enspace \text{where} \enspace}

\newcommand{\bAFI}{\bA^{\mathrm{FI}}}
\newcommand{\AFID}{A_{\rD}^{\mathrm{FI}}}
\newcommand{\ALFI}{A^{\mathrm{LFI}}}
\newcommand{\tbAFI}{\tbA^{\mathrm{FI}}}

\DeclareMathOperator{\tr}{tr}
\DeclareMathOperator{\diag}{diag}
\renewcommand{\div}{\mathrm{div} \, }
\DeclareMathOperator{\re}{Re}
\DeclareMathOperator{\im}{Im}

\DeclareMathOperator{\dist}{dist}

\newcommand{\del}{\partial}

\newcommand{\oB}{\overline{B}}

\begin{document}

\title{Analysis and numerical simulations of a landfast ice model}

\author{Felix Brandt}
\address{Department of Mathematics, University of California at Berkeley, Berkeley, 94720, CA, USA.}
\email{fbrandt@berkeley.edu}
\author{Carolin Mehlmann}
\address{Institute of Analysis and Numerics, Otto-von-Guericke University Magdeburg, Universit\"atsplatz 2, 39106 Magdeburg, Germany.}
\email{carolin.mehlmann@ovgu.de}

\subjclass[2020]{35Q86, 86A05, 35K59}
\keywords{Landfast ice, local and global strong well-posedness, stability of equilibria, time-periodic solutions, numerical simulations}

\begin{abstract}
In this manuscript, we consider a common modeling framework for Arctic landfast ice based on the work of Lemieux et al.~\cite{LDBRSF:16}, which is designed for use in large-scale climate models. This approach extends the classical viscous--plastic sea-ice model introduced by Hibler~\cite{Hib:79}, which remains the most used model for simulating large-scale sea-ice dynamics in climate science. In particular, landfast ice refers to sea-ice that is attached to the coastline or grounded and therefore exhibits nearly vanishing motion.
We present a rigorous analytical and numerical study of this landfast ice model. 
The main analytical contributions are the local strong well-posedness, the global strong well-posedness in the absence of external forces and for initial data close to constant equilibrium solutions, and the existence of time-periodic solutions.
Complementing the analysis, we perform numerical simulations that illustrate key qualitative differences between landfast ice and classical viscous--plastic sea-ice models. 
In particular, the simulations reveal the formation of stationary equilibrium states characterized by vanishing ice velocity. 
These observations are consistent with the global-in-time existence result close to equilibria established in \autoref{thm:glob ex close to equil} as well as the time-periodic result in \autoref{thm:time-per sols}. 
The combined analytical and numerical results provide new insight into the structure, stability, and long-term behavior of landfast ice dynamics.
\end{abstract}

\maketitle

\section{Introduction}\label{sec:intro}

Sea-ice in polar regions plays a central role in mediating the exchange of heat and momentum between the ocean and the atmosphere, thereby influencing the climate system as a whole \cite{SN:18}. 
The sea-ice cover typically consists of several regimes, which can be divided into landfast ice, pack ice, and the marginal ice zone.
In contrast to pack ice, which is transported by winds and ocean currents, landfast ice (also referred to as fastice in the sequel) remains stationary along coastlines or over shallow seabeds \cite{Mah:18}. 
In contrast, the marginal ice zone marks the transition from the pack ice regime to the open ocean, ice floes dynamically interact with each other \cite{CR:13}. 

In this manuscript, we consider a common framework based on the work of Lemieux et al.~\cite{LDBRSF:16} to represent Arctic landfast ice in large-scale climate models. 
This approach to model Arctic landfast ice is based on a modification of the viscous–plastic sea-ice model introduced by Hibler \cite{Hib:79}. 
The model of Hibler remains the standard for simulating large-scale sea-ice dynamics in climate models \cite{Blo:20}, where sea-ice is typically represented as a continuous viscous–plastic (VP) material. 
Recent modeling studies~\cite{LDBRSF:16, KBH:10} introduced a basal stress parameterization together with a modification of sea-ice tensile strength within the VP framework to account for anchoring effects in shallow regions. 
This combination has been shown to successfully reproduce the spatial distribution of fastice in several Arctic coastal areas.

In this paper, for the first time, we present a rigorous analysis of a suitable regularization of the landfast ice model introduced by Lemieux et al.~\cite{LDBRSF:16}.
In fact, we show that for suitable initial data that especially lie in $\rC^1$ in space, the regularized landfast ice model admits a unique, local-in-time, strong solution.
Moreover, for initial data close to constant equilibria, and in the absence of external forcing terms, we show the global strong well-posedness, and we discuss the existence of time-periodic strong solutions for small time-periodic forcing terms.
In addition, we provide numerical simulations, revealing the difference of landfast ice with viscous-plastic sea-ice \cite{Hib:79}.

The investigation of sea-ice has attracted a lot of attention in recent years in both, the numerical and mathematical analysis.
For numerical studies in the context of Hibler's viscous-plastic sea-ice model, we refer, e.g., to the works \cite{ZH:91, LT:09, KDL:15, MR:17, SK:18, MK:21, YP:22, SMLS:23}.
An elastic-viscous-plastic (EVP) sea-ice model was introduced in \cite{HD:97} in order to reduce the computational costs.
Concerning numerical studies of {\em landfast ice}, recently, a hybrid particle-continuum method for the simulation of landfast ice by means of subgrid iceberg interaction has been developed in \cite{MK:26}.

The {\em rigorous} mathematical analysis of sea-ice started only very recently.
In \cite{BDHH:22}, local strong well-posedness and global strong well-posedness close to constant equilibria for a fully parabolic regularized version of Hibler's model were obtained.
At the same time, the local strong well-posedness to a different regularization was established in \cite{LTT:22}. 
A Lagrangian approach paved the way for the existence of a local-in-time strong solution to Hibler's parabolic-hyperbolic model in \cite{Bra:25}, emphasizing the hyperbolicity of the balance laws as in \cite{LTT:22} while employing a weaker regularization of the stress tensor as in \cite{BDHH:22}.
Motivated by~\cite{HD:97}, the global well-posedness of the EVP sea-ice model with additional Voigt-regularization was shown in \cite{BLTT:25}.
Well-posedness for different regularizations of Hibler's model was also studied in \cite{CKK:23, CK:25, LTT:26}.
The existence and uniqueness of a weak solution to the momentum equation in Hibler's model with local cut-off was established in~\cite{DD:25}.
In \cite{DGH:25}, by introducing a broader solution concept, the authors showed the existence of energy-driven solutions to the momentum balance in Hibler's sea-ice model, especially addressing the singular limit in the stress tensor.
The analysis from \cite{BDHH:22} was also extended to the situation of time-periodic external forcing terms with applications to time-periodic wind forces \cite{BH:23}, see also \cite[Sec.~7.2]{Bra:24}, and the analysis of a coupled atmosphere-sea-ice-ocean model~\cite{BBH:26}.

To the best knowledge of the authors, the present article is the first one addressing the rigorous analysis of landfast ice.
For the analysis of a related problem on shallow grounded ice sheets, we refer to \cite{PT:23}.

We now describe the approach in the present paper in more detail.
In order to tackle the well-posedness, we first introduce the so-called {\em fastice operator}.
Compared to the analysis in \cite{BDHH:22}, we need to account for the adjusted rheology that differs from the one in Hibler's viscous-plastic model by the tensile strength as specified in \eqref{eq:stress tensor}.
We then employ theory for parabolic boundary value problems \cite{DHP:03} based on ellipticity properties of this differential operator as well as its $\rL^q$-realization subject to Dirichlet boundary conditions.
This leads to the maximal $\rL^p$-regularity of the $\rL^q$-realization of the fastice operator and then also the linearized operator matrix associated with the landfast ice model.
For the well-posedness, we employ quasilinear methods based on maximal $\rL^p$-regularity theory \cite{Ama:95, PS:16}.
In particular, this requires us to establish nonlinear estimates of the basal stress $f_\rb$ that is introduced in \eqref{eq:basal stress}.
Invoking time weights, we are able to lower the regularity of the initial data and exploit instantaneous regularization of the solution.

With regard to the global well-posedness close to equilibria, we make use of the generalized principle of linearized stability \cite{PSZ:09} to deal with the fact that zero lies in the spectrum of the operator matrix due to the Neumann Laplacian operators in the regularized balance laws.
We especially manage to handle the case of tensile strength parameter $k_{\rt} \equiv 1$, which induces a smallness condition on the regularization parameter in the rheology.

For the existence of time-periodic solutions, we capitalize on the Arendt-Bu theorem \cite{AB:02}, providing a characterization of the so-called maximal periodic $\rL^p$-regularity in terms of maximal $\rL^p$-regularity and a spectral condition on the semigroup.
At this stage, to handle the lack of invertibility, we introduce a new ground space, where for the $h$- and $A$-component, we consider the space of functions with spatial average zero.
In this case, we also discuss the situation of $k_{\rt} \equiv 1$.

This article is organized as follows.
In \autoref{sec:landfast ice model}, we introduce the governing equations of the landfast ice model from \cite{LDBRSF:16}.
\autoref{sec:well-posedness} is dedicated to the local-in-time strong well-posedness of the regularized landfast ice model as asserted in \autoref{thm:loc strong well-posedness}, and to the required underlying linear theory and nonlinear estimates.
In addition, we discuss the situation of hyperbolic balance laws, i.e., the case without parabolic regularization in the equations for the mean ice thickness and the ice concentration.
\autoref{sec:glob wp close to equil} is concerned with the global strong well-posedness close to equilibrium solutions in the absence of external forcing terms, while in \autoref{sec:time-per sols}, we prove the existence of a time-periodic strong solution.
Finally, in \autoref{sec:numerical simulations}, we present the numerical simulations, revealing key differences between landfast ice and viscous-plastic sea-ice, and reflecting the global existence result close to equilibria in \autoref{thm:glob ex close to equil} and the existence of a time-periodic solution for sufficiently small time-periodic wind forces as shown in \autoref{thm:time-per sols}.
The required analytical tools such as theory for parabolic boundary value problems, quasilinear existence theory, the generalized principle of linearized theory, and the Arendt-Bu theorem are collected in \autoref{sec:det theory}.

\section{Governing equations of the landfast ice model}\label{sec:landfast ice model}

Let $G \subset \bR^2$ be a bounded domain with boundary $\del G$ of class $\rC^2$ and $(0,T)$, with $0 < T \le \infty$, the time interval on which we consider the sea-ice evolution.
By $x$, $y$, $t$, we denote the horizontal spatial coordinates and the time, respectively. 
The sea-ice  motion is described by the following three variables: the sea-ice concentration $A = A(x,y, t) \colon G \times (0,T) \to [0,1]$ (the fraction of a grid cell covered with ice), the mean ice thickness $h = h(x,y, t) \colon G \times (0,T) \to [0, \infty)$, and the sea-ice velocity $\bv = \bv(x,y,t) \colon G \times (0,T) \to \bR^2$.
The sea-ice concentration and sea-ice thickness evolve over time via transport equations, where we include artificial diffusion terms to simplify the analysis in a first step.
We will also elaborate on the possibility of omitting these diffusion terms in a second step, see \autoref{ssec:rems on parabolic-hyperbolic setting}.
For $d_\rh$, $d_\rA > 0$, the regularized balance laws are given by
\begin{equation}\label{eq:transport}
    \begin{aligned}
        \partial_t h + \div (\bv h) &= d_\rh \Delta h, &&\tin G \times (0,T), \\
        \partial_t A + \div (\bv A) &= d_\rA \Delta A, &&\tin G\times (0,T),
    \end{aligned}
\end{equation}
while the velocity is determined from the momentum equation
\begin{equation}\label{eq:mom}
    \rho h \bigl(\partial_t \bv + (\bv \cdot \nabla) \bv\bigr)=f_{\rc}+f_{\rsh}+f_{\ro}+f_{\ra}+f_{\rb}+f_\sigma, \tin G \times (0,T).
\end{equation}
where $\rho > 0$ is the (constant) sea-ice density. 
The forcing terms on the right-hand side of \eqref{eq:mom} are introduced in the sequel.
Note that the convective term $(\bv \cdot \nabla)\bv$ will be neglected in the numerical simulations in \autoref{sec:numerical simulations}.
In the above, the term
\begin{equation*}
    f_\sigma = \div \bm{\sigma}
\end{equation*}
represents the sea-ice rheology, which models the relation of the internal sea-ice stresses $\bm{\sigma}(\bv, A, h)$ and the strain rates
\begin{equation*}
    \dot{\bm{\varepsilon}} = \frac{1}{2} (\nabla \bv + \nabla \bv^\top) = \dot{\bm{\varepsilon}}' + \frac{1}{2} \tr(\dot{\bm{\varepsilon}}) I \in \mathbb{R}^{2 \times 2},
\end{equation*}
where $I \in \mathbb{R}^{2 \times 2}$ denotes the identity matrix.
Following Lemieux et al.~\cite{LDBRSF:16}, we consider a modified version of the viscous-plastic rheology to model landfast ice.
To make the notation more compact, we introduce the rheology in terms of the trace and deviatoric parts of the strain rate tensor, i.e.,
\begin{equation}\label{eq:stress tensor}
    \bm{\sigma} \coloneqq \frac{1}{2} \zeta \dot{\bm{\varepsilon}}'(\bv) + \zeta (\tr(\dot{\bm{\varepsilon}}(\bv))) I - \frac{P'}{2} I,
\end{equation}
The viscosity $\zeta$ and the pressure term $P'$ are given by
\begin{equation*}
    \begin{aligned}
        \zeta
        &\coloneqq \frac{P + T}{2 \max(\Delta_P(\bv), \Delta_{\min})}, \twhere \Delta_P(\bv) \coloneqq \frac{1}{2} \dot{\bm{\varepsilon}}' : \dot{\bm{\varepsilon}}' \taswellas \Delta_{\min} = 2 \times 10^{-9}, \tand\\
        P' 
        &\coloneqq P - T.
    \end{aligned}
\end{equation*}
In the above, the ``$:$'' in $\Delta_P(\bv)$ denotes the Frobenius inner product between tensors, and $e \ge 1$ is the ratio of the main axes of the elliptic yield curve. 
Following \cite{KBH:10}, the ice strength $P$ and the tensile strength~$T$ are modeled as
\begin{equation}\label{eq:ice strength & tensile strength}
    P = h \, P^* \exp(-c^*(1 - A)) - T, \qquad T = k_{\rt} h \, P^* \exp(-c^*(1 - A))=k_{\rt}P,
\end{equation}
where $P^* > 0$ is an ice strength parameter, $c^* > 0$ is an ice concentration parameter, and $k_{\rt} \in [0,1]$ is the isotropic tensile strength parameter. 
As we will see below, the case $k_{\rt} \equiv 1$ poses additional analytical difficulties for the long-term dynamics as well as the existence of time-periodic solutions.

To ensure a smooth transition between the viscous closure, $\Delta_{\min}$, and plastic regimes, $\Delta_P(\bv)$, we follow~\cite{KHLFG:00, MR:17} and define the regularized strain rate invariant: by
\begin{equation}\label{eq:reg strain rate}
    \Delta(\dot{\bm{\varepsilon}}) \coloneqq \sqrt{\Delta_P(\bv) + \Delta_{\min}^2}. 
\end{equation}

The atmospheric $f_{\ra}$ and oceanic drag $f_{\ro}$  are modeled as
\begin{equation*}
    f_{\ra} = C_{\ra} \rho_{\ra} \| \bv_{\ra} \|_2 \bv_{\ra} \tand f_{\ro} = C_{\ro} \rho_{\ro} \| \bv - \bv_{\ro} \|_2 (\bv - \bv_{\ro}),
\end{equation*}
where $C_{\ra}$, $C_{\ro} > 0$ are the oceanic and atmospheric drag coefficients, $\rho_{\ra}$, $\rho_{\ro} > 0$ are the oceanic and atmospheric densities, and $\bv_{\ra}$, $\bv_{\ro}$ represent the velocity fields of near surface atmospheric and oceanic flows, respectively.

The forces due to the Coriolis force $f_{\rc}$ and the changing sea surface height, $f_{\rsh}$, are given in their common form by
\begin{equation*}
    f_{\rc} = -\rho h f \mathbf{k} \times \bv \tand f_{\rsh} = -\rho h g \nabla H_{\rd},
\end{equation*}
with Coriolis parameter $C_{\cor} > 0$, the unit upward pointing unit vector $\mathbf{k}$, the gravity constant $g$, and the surface height $H_{\rd}$. 
We follow a common approach and approximate the changing sea-surface height by 
\begin{equation*}
    f_{\rsh} \approx C_{\cor} \bk \times \bv_\ro.
\end{equation*}

The basal stress $f_{\rb}$ is given by
\begin{equation}\label{eq:basal stress}
    f_{\rb} = 
    \begin{cases}
        0, & \text{if } h \leq h_{\crit}, \\
        k_2 \left( \frac{-\bv}{|\bv| + v_0} \right)(h - h_{\crit}) \exp\left(-\alpha_{\rb}(1 - A)\right), & \text{if } h > h_{\crit},
    \end{cases}
\end{equation}
where $h$ is the mean thickness in a grid cell (or volume per unit area), $h_{\crit}$ the critical mean thickness,  $k_2$ a free parameter that determines the maximum basal stress, $A$ the ice concentration, $v_0$ a small velocity parameter, and $\alpha_{\rb}$ the basal stress ice concentration parameter. 
The inclusion of the basal drag, $f_{\rb}$ in~\eqref{eq:mom} indicates that the basal stress is zero when $h \leq h_{\crit}$, or, in other words, that the parameterized ridge is not deep enough to reach the seafloor. 
All parameters of the problem are collected in \autoref{tab:1}.
\begin{table}[h!]
\centering
\begin{tabular}{lll}
\textbf{Symbol} & \textbf{Definition} & \textbf{Value} \\
\hline
$e$      & Ellipse aspect ratio & $ 2.0$ \\
$k_{\rt}$    & Isotropic tensile strength parameter & $0.15$ \\
$h_{\crit}$    & Critical thickness & $2 \, \mathrm{m}$ \\
$k_2$    & Maximum basal stress parameter & $5\,\mathrm{N\,m^{-3}}$ \\
$\alpha_{\rb}$ & Basal stress ice concentration parameter & $20$ \\
$v_0$    & Basal stress velocity parameter & $5 \times 10^{-8}\,\mathrm{m\,s^{-1}}$ \\
$\rho$   & Sea-ice density & $900\,\mathrm{kg\,m^{-3}}$ \\
$\rho_{\ra}$ & Air density & $1.3\,\mathrm{kg\,m^{-3}}$ \\
$\rho_{\ro}$ & Water density & $1026\,\mathrm{kg\,m^{-3}}$ \\
$C_{\ra}$ & Air drag coefficient & $1.2 \times 10^{-3}$ \\
$C_{\ro}$ & Water drag coefficient & $5.5 \times 10^{-3}$ \\
$C_{\cor}$      & Coriolis parameter & $1.46 \times 10^{-4}\,\mathrm{s^{-1}}$ \\
$P^\ast$ & Ice strength parameter & $27.5 \times 10^{3}\,\mathrm{N\,m^{-2}}$ \\
$c^*$      & Ice concentration parameter & $20$ \\
\end{tabular}
\caption{Model parameters used in the simulations in \autoref{sec:numerical simulations}.\label{tab:1}}
\end{table}

The sea-ice system is completed by prescribing the initial and boundary conditions as
\begin{equation*}
    \bv(0) = \bv_0, \enspace h(0) = h_0 \enspace \text{and } A(0) = A_0, \ton G
\end{equation*}
as well as
\begin{equation*}
    \bv = 0 \tand \del_\nu h = \del_\nu A = 0, \enspace \text{on} \enspace \del G \times (0,T).
\end{equation*}

\section{Linear theory and local well-posedness}\label{sec:well-posedness}

In this section, we address the local-in-time strong well-posedness of the landfast ice model as introduced in \autoref{sec:landfast ice model}.
After stating the corresponding result, \autoref{thm:loc strong well-posedness}, in \autoref{ssec:lin theory}, we address the linear theory by showing that the differential operator associated with the (regularized) updated strain rate stress tensor from \eqref{eq:stress tensor}, see also \eqref{eq:reg strain rate} for the regularization, admits maximal $\rL^p$-$\rL^q$-regularity, paving the way for the maximal $\rL^p$-regularity of the complete linearized system.
In \autoref{ssec:nonlin ests & proof of local wp}, we then establish suitable nonlinear estimates, where we especially focus on the basal stress $f_{\rb}$, and we then use quasilinear methods, see \autoref{lem:loc wp quasilin ACP}, to deduce the local well-posedness result.
We finally discuss the local strong well-posedness of the system without parabolic regularization in the balance laws in \autoref{ssec:rems on parabolic-hyperbolic setting}.

We start by making precise the functional analytic setting.
In fact, the ground space $\rX_0$ and regularity space $\rX_1$ are given by
\begin{equation}\label{eq:ground & regularity space}
    \rX_0 \coloneqq \rL^q(G)^2 \times \rL^q(G) \times \rL^q(G) \tand \rX_1 \coloneqq \rW^{2,q}(G)^2 \cap \rW_0^{1,q}(G)^2 \times \rW_\rN^{2,q}(G) \times \rW_\rN^{2,q}(G),
\end{equation}
where $\rW_\rN^{2,q}(G) \coloneqq \{f \in \rW^{2,q}(G) : \del_\nu f = 0 \text{ on } \del G\}$.
For $p$, $q \in (1,\infty)$ and $\mu \in (\frac{1}{p},1]$, we also introduce the time trace space $\rX_{\gamma,\mu} = (\rX_0,\rX_1)_{\mu - \nicefrac{1}{p},p}$.
If
\begin{equation}\label{eq:cond on p, q and mu}
    \frac{1}{2} + \frac{1}{p} + \frac{1}{q} < \mu,
\end{equation}
this space takes the form 
\begin{equation*}
    \rX_{\gamma,\mu} = \rB_{qp,\rD}^{2(\mu - \frac{1}{p})}(G)^2 \times \rB_{qp,\rN}^{2(\mu - \frac{1}{p})}(G) \times \rB_{qp,\rN}^{2(\mu - \frac{1}{p})}(G),
\end{equation*}
where the subscripts $_\rD$ and $_\rN$ indicate Dirichlet and Neumann boundary conditions, respectively.
Note that \eqref{eq:cond on p, q and mu} ensures that the trace and the normal derivative are well-defined.
Indeed, if $p$, $q$ and $\mu$ satisfy~\eqref{eq:cond on p, q and mu}, it follows from classical theory, see, e.g., \cite[Thm.4.6.1]{Tri:78}, that there exists $\alpha > 0$ sufficiently small such that
\begin{equation*}
    \rX_{\gamma,\mu} \hookrightarrow \rC^{1,\alpha}(\overline{G})^4.
\end{equation*}
As we will also see in \autoref{ssec:lin theory}, the reason for making this assumption on $p$, $q$ and $\mu$ is to guarantee that the coefficients of the differential operator are H\"older-continuous, enabling us to prove the boundedness of the $\cH^\infty$-calculus and thus also the maximal $\rL^p$-regularity of the fastice operator.

To make sure that the $h$ and $A$ take values in their physically reasonable ranges, and that the mean ice thickness stays uniformly bounded from below, we define the open subset $V_\mu \subset \rX_{\gamma,\mu}$ by
\begin{equation}\label{eq:open set V_mu}
    V_\mu \coloneqq \{\bu = (\bv,h,A) \in \rX_{\gamma,\mu} : h > \kappa \tand A \in (0,1+\delta)\}.
\end{equation}
Here $\delta > 0$ was introduced to allow for $A = 1$, which corresponds to the case of thick ice in the complete control area.
In the sequel, we also use $\bu = (\bv,h,A)$ to denote the principal variable.

We provide the complete system of equations below.
As we assume that $h \ge \kappa$ for a sufficiently small parameter $\kappa > 0$, we may divide the momentum equation by the ice mass $\rho h$, leading to
\begin{equation}\label{eq:landfast ice model}
    \left\{
    \begin{aligned}
        \del_t \bv + (\bv \cdot \nabla) \bv
        &= \frac{1}{\rho h} \div(\bsigma) - C_{\cor} \bk \times (\bv - \bv_\ro)\\
        &\quad + \frac{1}{\rho h}\bigl(f_\ra + f_\ro(\bv) + f_\rb(\bv,h,A)\bigr), &&\tin G \times (0,T),\\
        \del_t h + \div(\bv h)
        &= d_\rh \Delta h, &&\tin G \times (0,T),\\
        \del_t A + \div(\bv A)
        &= d_\rA \Delta A, &&\tin G \times (0,T),\\
        \bv = 0, \enspace \del_\nu h&= \del_\nu A = 0, &&\ton \del G \times (0,T),\\
        \bv(0) = \bv_0, \enspace h(0)
        &= h_0, \enspace A(0) = A_0, &&\tin G.
    \end{aligned}
    \right.
\end{equation}

Before stating the first main result of this paper, we invoke the concept of time weights.
In fact, for a time interval $(0,T)$, where $0 < T \le \infty$, and for $\rX_i$, $i = 0,1$, we define 
\begin{equation*}
    \rL_\mu^p(0,T;\rX_i) \coloneqq \left\{f \colon (0,T) \to \rX_i : t^{1-\mu} f \in \rL^p(0,T;\rX_i)\right\}, \twith \| f \|_{\rL_\mu^p(0,T;\rX_i)} \coloneqq \left(\int_0^T \| t^{1-\mu} f \|_{\rX_i}^p \srd t\right)^{\frac{1}{p}}.
\end{equation*}
Likewise, we define
\begin{equation*}
    \begin{aligned}
        \rW_\mu^{1,p}(0,T;\rX_0) 
        &\coloneqq \left\{f \in \rL_\mu^p(0,T;\rX_0) \cap \rW_{\mathrm{loc}}^{1,1}(0,T;\rX_0) : \del_t f \in \rL_\mu^p(0,T;\rX_0)\right\}, \twith\\
        \| f \|_{\rW_\mu^{1,p}(0,T;\rX_0)}
        &\coloneqq \left(\| f \|_{\rL_\mu^p(0,T;\rX_0)}^p + \| \del_t f \|_{\rL_\mu^p(0,T;\rX_0)}^p\right)^{\frac{1}{p}}.
    \end{aligned}
\end{equation*}
We are now in position to state the first main result of this paper on the local strong well-posedness of the landfast ice model \eqref{eq:landfast ice model}.

\begin{thm}\label{thm:loc strong well-posedness}
Let $p$, $q \in (1,\infty)$ and $\mu \in (\frac{1}{p},1]$ such that \eqref{eq:cond on p, q and mu} is satisfied.
Moreover, for $V_\mu$ as made precise in \eqref{eq:open set V_mu}, consider $\bu_0 = (\bv_0,h_0,A_0) \in V_\mu$, and for some given $T > 0$, assume that $\bv_a$ and $\bv_{\ro}$ satisfy $\bv_a$, $\bv_{\ro} \in \rL^{2p}(0,T;\rL^{2q}(G)^2)$.

Then there exist $T' = T'(\bu_0) \in (0,T]$ and $r = r(\bu_0) > 0$ with $\overline{B}_{\rX_{\gamma,\mu}}(\bu_0,r) \subset V_\mu$, where $\overline{B}_{\rX_{\gamma,\mu}}(\bu_0,r)$ denotes the closed ball with center $\bu_0$ and radius $r > 0$ in  $\rX_{\gamma,\mu}$, such that \eqref{eq:landfast ice model} admits a unique solution
\begin{equation*}
    \bu = (\bv,h,A) \in \rW_\mu^{1,p}(0,T';\rX_0) \cap \rL_\mu^p(0,T';\rX_1) \cap \rC([0,T'];V_\mu) \eqqcolon \bE_{1,\mu}(0,T')
\end{equation*}
on $[0,T']$ for any initial value $\bu_1 \in \overline{B}_{\rX_{\gamma,\mu}}(\bu_0,r)$.
The solution also has the following properties:
\begin{enumerate}[(a)]
    \item There is a constant $c = c(\bu_0) > 0$ such that for all $\bu_1$, $\bu_2 \in \overline{B}_{\rX_{\gamma,\mu}}(\bu_0,r)$, we have
    \begin{equation*}
        \| \bu(\cdot,\bu_1) - \bu(\cdot,\bu_2) \|_{\bE_{1,\mu}(0,T')} \le c \cdot \| \bu_1 - \bu_2 \|_{\rX_{\gamma,\mu}},
    \end{equation*}
    i.e., the solution $\bu$ depends continuously on the initial data.
    \item For every $\tau \in (0,T')$, we find that 
    \begin{equation*}
        \bu \in \bE_1(\tau,T') \coloneqq \bE_{1,1}(\tau,T') \hookrightarrow \rC([\tau,T'];\rX_\gamma),
    \end{equation*}
    so the solution regularizes instantaneously.
    \item If we assume that $\bv_a$, $\bv_{\ro} \in \rL^{2p}(0,\infty;\rL^{2q}(G)^2)$, then the solution $\bu$ exists on a maximal time interval $J(\bu_0) = [0,t_+(\bu_0))$ that is characterized by the following alternatives:
    \begin{enumerate}[(i)]
        \item global existence, so $t_+(\bu_0) = \infty$,
        \item $\lim_{t \to t_+(\bu_0)} \dist_{\rX_{\gamma,\mu}}(\bu(t),\del V_\mu) = 0$, or
        \item the limit $\lim_{t \to t_+(\bu_0)} \bu(t)$ does not exist in $\rX_{\gamma,\mu}$.
    \end{enumerate}
\end{enumerate}
\end{thm}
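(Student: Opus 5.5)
We plan to rewrite \eqref{eq:landfast ice model} as a quasilinear abstract Cauchy problem
\begin{equation*}
    \bu'(t) + \bA(\bu(t))\bu(t) = F(t,\bu(t)), \quad t \in (0,T), \quad \bu(0) = \bu_0,
\end{equation*}
on $\rX_0$, with domain $\rX_1$. The proof then follows from the quasilinear existence result \autoref{lem:loc wp quasilin ACP}, which is based on \cite{Ama:95, PS:16}. That result requires two hypotheses on the open set $V_\mu$:
\begin{itemize}
    \item[(H1)] $\bA \in \rC^{1-}(V_\mu;\cL(\rX_1,\rX_0))$, and $\bA(\bu_0)$ has maximal $\rL^p$-regularity for every $\bu_0 \in V_\mu$;
    \item[(H2)] $F$ satisfies a Carath\'eodory-type local Lipschitz estimate on bounded subsets of $V_\mu$, with time-dependent Lipschitz modulus $\phi_R \in \rL^p(0,T)$.
\end{itemize}
Properties (a)--(c) are then the standard conclusions of that lemma. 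The instantaneous regularization in (b) comes from the time-weight machinery: solutions in $\bE_{1,\mu}(0,T')$ restricted to $[\tau,T']$ lie in $\bE_1(\tau,T')$. The blow-up alternatives in (c) come from continuation together with compactness of $\rX_{\gamma,\mu}\hookrightarrow\rX_{\gamma,\mu'}$ for $\mu'<\mu$.

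\textbf{Structure of $\bA$.} We take $\bA(\bu)$ to be upper triangular. The $(1,1)$-entry is the fastice operator $-\frac{1}{\rho h}\div \bsigma(\cdot)$, i.e.\ the principal part of $\frac{1}{\rho h}\div\bsigma$ with coefficients frozen at $\nabla\bv$, $h$ and $A$. The coupling entries in the first row come from $\frac{1}{\rho h}\nabla P'$ acting on $(h,A)$; these are first-order terms. The diagonal entries for $h$ and $A$ are $-d_\rh\Delta_\rN$ and $-d_\rA\Delta_\rN$. Since the coupling terms are first order, we use a perturbation argument. Maximal regularity of the operator matrix reduces to maximal $\rL^p$-regularity of the diagonal entries. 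This holds by a triangular (Schur-type) argument, possibly after a shift, since lower-order perturbations preserve maximal regularity on finite intervals.

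\textbf{Maximal regularity of the fastice operator.} For $\bu_0\in V_\mu$ we have $\rX_{\gamma,\mu}\hookrightarrow\rC^{1,\alpha}$. Hence the coefficients $\zeta(\nabla\bv_0,h_0,A_0)$ and the derivative of $\Delta$ are H\"older continuous. Ellipticity follows by computing the symbol of the linearization of $\xi\mapsto \zeta(\xi)\bigl(\tfrac12\xi'+\tr(\xi) I\bigr)$. We expect positive definiteness because $P+T=(1+k_\rt)P>0$, using $h>\kappa$ and the fact that $\Delta$ is a strictly convex regularization of $\sqrt{\Delta_P+\Delta_{\min}^2}$, exactly as in \cite{BDHH:22}. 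We also need the Lopatinskii--Shapiro condition for the Dirichlet condition. With these, \cite{DHP:03} gives a bounded $\Hinfty$-calculus after a shift, hence maximal regularity. Local Lipschitz continuity of $\bA$ on $V_\mu$ follows from smoothness of $\zeta$ in $(\nabla\bv,h,A)$, given that the regularization keeps the denominator $\ge\Delta_{\min}$, together with the $\rC^1$ embedding.

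\textbf{The right-hand side.} We collect the remaining terms in $F$:
\begin{itemize}
    \item the convection terms $(\bv\cdot\nabla)\bv$, $\bv\cdot\nabla h + h\div\bv$ and the analogous terms for $A$;
    \item the Coriolis term;
    \item $\frac{1}{\rho h}f_\ra$ and $\frac{1}{\rho h}f_\ro(\bv)$;
    \item the basal stress $\frac{1}{\rho h}f_\rb$.
\end{itemize}
The convection terms are bilinear. They are estimated in $\rL^q$ by $\rL^\infty\times\rW^{1,q}$ bounds, using the $\rC^1$ embedding of the trace space. For the forcing terms, $\|\bv_\ra\|\bv_\ra \in \rL^p(0,T;\rL^q)$ by Hölder, since $\bv_\ra\in\rL^{2p}(\rL^{2q})$. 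The ocean drag $\|\bv-\bv_\ro\|(\bv-\bv_\ro)$ is locally Lipschitz in $\bv$, with a modulus bounded by $|\bv|+|\bv-\bv_\ro|$. This modulus lies in $\rL^{2p}(\rL^{2q})$, which gives the required $\rL^p$-in-time Lipschitz function.

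\textbf{Main obstacle: the basal stress.} We expect the basal stress to be the hardest part. Write it as $k_2\,\bm{g}(\bv)\,(h-h_\crit)_+\,e^{-\alpha_\rb(1-A)}$ with $\bm{g}(\bv)=-\bv/(|\bv|+v_0)$. The map $\bm{g}$ is globally Lipschitz on $\bR^2$, with constant of order $2/v_0$, because $v_0>0$. The map $s\mapsto(s-h_\crit)_+$ is Lipschitz, and the exponential is smooth on bounded sets. The Nemytskii operator is therefore locally Lipschitz from $\rL^\infty$-bounded sets into $\rL^q$. This requires no differentiability, which suffices because $f_\rb$ enters only the lower-order part $F$ and not $\bA$. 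It is precisely for this reason that we keep $f_\rb$ out of the quasilinear part.

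With (H1) and (H2) verified, \autoref{lem:loc wp quasilin ACP} yields existence, uniqueness and properties (a)--(c).
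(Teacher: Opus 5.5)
Your proposal is correct and takes essentially the paper's route: the upper-triangular quasilinear reformulation \eqref{eq:fastice as quasilin evol eq}, maximal regularity of the fastice operator from ellipticity plus the Lopatinskii--Shapiro condition, Lipschitz estimates in which the basal stress is the only new term, and then \autoref{lem:loc wp quasilin ACP}. The paper supplies the Lopatinskii--Shapiro condition, which you only name, via strong normal ellipticity in \autoref{lem:strong normal ell of the Hibler op}; your remaining deviations are minor and if anything more careful, namely using the triangular structure instead of $\mathcal{H}^\infty$-perturbation relative to $A_{\mathrm{diag}}^{1/2}$, giving an explicit global Lipschitz bound for $-\bv/(|\bv|+v_0)$ and $(h-h_{\mathrm{crit}})_+$, and using a Carath\'eodory formulation with $\rL^p$-in-time moduli for the time-dependent drag terms (the paper's autonomous Lipschitz statement in \autoref{lem:nonlin ests} glosses over this, so strictly speaking a time-dependent version of the quasilinear lemma is what is needed).
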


In the following two subsections, we carry out the proof of \autoref{thm:loc strong well-posedness}.

\subsection{Linear theory}\label{ssec:lin theory}
\ 

In this subsection, we address the linear theory.
A key step here is to establish suitable ellipticity properties of the operator associated with the internal ice stress in the landfast ice.
Note that due to the tensile strength $T$ as introduced in \eqref{eq:ice strength & tensile strength}, it is different from Hibler's sea-ice model.

For convenience, we briefly recall the regularized stress tensor as introduced in \autoref{sec:landfast ice model}.
By a slight abuse of notation, we still denote the viscosity with the regularized strain rate $\Delta(\dbeps)$ from \eqref{eq:reg strain rate} by $\zeta$, i.e.,
\begin{equation*}
    \zeta = \frac{\tP}{2 \Delta(\dbeps)}, \twhere \tP \coloneqq P + T = (1+k_{\rt}) h P^* \exp(-c^*(1-A)).
\end{equation*}
The corresponding regularized stress tensor then takes the form
\begin{equation*}
    \bsigma = \frac{1}{2} \zeta \dbeps'(\bv) + \zeta (\tr(\dbeps(\bv))) I - \frac{P'}{2} I, \twhere P' \coloneqq P - T = (1-k_{\rt}) h P^* \exp(-c^*(1-A)).
\end{equation*}
In order to deduce the shape of the quasilinear second order operator arising from $\div \bsigma$, we introduce a matrix $\bS \colon \bR^{2 \times 2} \to \bR^{2 \times 2}$ such that
\begin{equation*}
    \bS \dbeps = \begin{pmatrix}
    \left(1 + \frac{1}{e^2}\right) \dbeps_{11} + \left(1 - \frac{1}{e^2}\right) \dbeps_{22} & \frac{1}{e^2} \left(\dbeps_{12} + \dbeps_{21}\right) \\
    \frac{1}{e^2} \left(\dbeps_{12} + \dbeps_{21}\right) & \left(1 - \frac{1}{e^2}\right) \dbeps_{11} + \left(1 + \frac{1}{e^2}\right) \dbeps_{22}
    \end{pmatrix}.
\end{equation*}
Upon identifying $\dbeps \in \bR^{2 \times 2}$ with the vector $(\dbeps_{11},\dbeps_{12},\dbeps_{21},\dbeps_{22})^\top \in \bR^4$, the action of $\bS$ to $\dbeps$ amounts to the multiplication by the matrix
\begin{equation*}
    \bS = \left(\bS_{ij}^{kl}\right) = \begin{pmatrix}
	    1+\frac{1}{e^2} & 0 & 0 & 1-\frac{1}{e^2} \\ 
		0 & \frac{1}{e^2}  & \frac{1}{e^2} & 0   \\
		0 & \frac{1}{e^2} & \frac{1}{e^2} & 0  \\
	     1-\frac{1}{e^2} & 0 & 0 & 1 + \frac{1}{e^2} 
	\end{pmatrix}.
\end{equation*}
With $\tS(\dbeps,\tP) \coloneqq \frac{\tP}{2} \frac{\bS \dbeps}{\Delta(\dbeps)}$, the stress tensor $\bsigma$ admits the representation $\bsigma = \tS(\dbeps,\tP) - \frac{P'}{2}I$.
The \textit{fastice operator} $\bAFI$ is then defined by 
\begin{equation*}
    \bAFI \bv \coloneqq \frac{1}{\rho h} \div \tS(\dbeps,\tP) = \frac{1}{\rho h} \div\left(\frac{\tP}{2} \frac{\bS \dbeps}{\Delta(\dbeps)}\right).
\end{equation*}
Straightforward calculations reveal that
\begin{equation*}
    (\bAFI \bv)_i \coloneqq -\sum \limits_{j,k,l=1}^2 \frac{\tP}{2 \rho h}\frac{1}{\Delta(\dbeps)}\left(\bS_{ij}^{kl} - \frac{1}{\Delta^2 (\dbeps)}(\bS \dbeps)_{ik} (\bS \dbeps)_{jl} \right) \rD_k \rD_l \bv_j + \frac{1}{2 \rho h \Delta(\dbeps)}\sum \limits_{j=1}^2 (\del_j \tP) (\bS \dbeps)_{ij},
\end{equation*}
where $i = 1,2$ and $\rD_m = - \mri \del_m$.
The coefficients of the principal part of $\bAFI$ are given by
\begin{equation}\label{eq:coeffs principal part fastice op}
    a_{ij}^{kl}(\dbeps,\tP) \coloneqq -\frac{\tP}{2 \rho h} \frac{1}{\Delta(\dbeps)} \left(\bS_{ij}^{kl} - \frac{1}{\Delta^2(\dbeps)}(\bS \dbeps)_{ik} (\bS \dbeps)_{jl}\right).
\end{equation}
For $\bu_0 = (\bv_0,h_0,A_0) \in \rC^1(\oG)^2 \times \rC(\oG) \times \rC(\oG)$ with $h_0 \ge \kappa$, the \textit{linearized fastice operator} takes the form
\begin{equation}\label{eq:lin fastice op}
    \begin{aligned}
        [\bAFI(\bu_0) \bv]_i
        &= \sum_{j,k,l=1}^2 a_{ij}^{kl}(\dbeps(\bv_0),\tP(h_0,A_0)) \rD_k \rD_l \bv_j + \frac{1}{2 \rho h_0 \Delta(\dbeps(\bv_0))} \sum_{j=1}^2 (\del_j \tP(h_0,A_0)) (\bS \dbeps(\bv))_{ij}.
    \end{aligned}
\end{equation}

We now analyze the linearized fastice operator $\bAFI(\bu_0)$ from \eqref{eq:lin fastice op}.
In the light of \autoref{lem:max reg boundary value problem} on the boundedness of the $\Hinfty$-calculus of the $\rL^q$-realization and \autoref{lem:strong normal ell implies Lopatinskii-Shapiro} on the Lopatinskii-Shapiro condition, the task is to establish the strong normal ellipticity as made precise in \autoref{def:strong normal ell}.
This is what we will address in the following.
We start with the strong ellipticity and the parameter-ellipticity.

\begin{prop}\label{prop:ell of fast ice op}
Let $\bu_0 \in \rC^1(\oG)^2 \times \rC(\oG) \times \rC(\oG)$ be such that $h_0 \ge \kappa$.
Then for all $x \in \oG$, the principal part of the negative linearized fastice operator $-\bAFI(\bu_0)$ as introduced in \eqref{eq:lin fastice op} is strongly elliptic and parameter-elliptic of angle $\phi_{-\bAFI(\bu_0)} = 0$.
\end{prop}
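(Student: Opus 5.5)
The plan is to compute the principal symbol of $-\bAFI(\bu_0)$ at a fixed $x \in \oG$ and show that it is a symmetric, positive definite $2\times 2$ matrix, uniformly in $|\xi| = 1$. Strong ellipticity and parameter-ellipticity of angle $0$ then follow immediately, since the spectrum of a symmetric positive definite matrix lies in $(0,\infty)$.

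\emph{Step 1: the symbol.} Fix $x$ and write $\dbeps = \dbeps(\bv_0)(x)$ and $c \coloneqq \frac{\tP(h_0,A_0)}{2\rho h_0 \Delta(\dbeps)}$. Because $h_0 \ge \kappa$ and $\tP \ge (1+k_\rt)\kappa P^* e^{-c^*(1+\delta)} > 0$, we have $c>0$, and $c$ is continuous on $\oG$ since $\bv_0 \in \rC^1$. For $\xi \in \bR^2$ the symbol of the principal part of $-\bAFI(\bu_0)$ (with the sign convention $\rD = -\mri\del$) is
\begin{equation*}
    \cA(x,\xi)_{ij} = c \sum_{k,l}\Bigl(\bS_{ij}^{kl} - \Delta^{-2}(\bS\dbeps)_{ik}(\bS\dbeps)_{jl}\Bigr)\xi_k\xi_l .
\end{equation*}
The first step is to pin down the index bookkeeping, namely how $\bS_{ij}^{kl}$ arises from $\div(\bS\dbeps(\bv))$, so that the first term is $\langle \bS(\eta\otimes\xi)^{\rm sym},(\zeta\otimes\xi)^{\rm sym}\rangle$ for test vectors $\eta,\zeta$. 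From this we get
\begin{equation*}
    \langle \cA(x,\xi)\eta,\eta\rangle = c\Bigl(\langle \bS M, M\rangle - \Delta^{-2}\langle \bS\dbeps, M\rangle^2\Bigr), \qquad M \coloneqq (\eta\otimes\xi)^{\rm sym},
\end{equation*}
which in particular shows that $\cA(x,\xi)$ is symmetric.

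\emph{Step 2: positivity.} The matrix $\bS$ is symmetric positive semidefinite on $\bR^4$. Its eigenvalues on symmetric tensors are $2$ (on $I$), $\frac{2}{e^2}$ (on the trace-free diagonal part) and $\frac{2}{e^2}$ (on the off-diagonal symmetric part), so $\bS$ is positive definite on symmetric matrices, with $\langle \bS M,M\rangle \ge \frac{2}{e^2}|M|^2$. Applying Cauchy--Schwarz in the $\bS$-inner product gives
\begin{equation*}
    \langle \bS\dbeps,M\rangle^2 \le \langle \bS\dbeps,\dbeps\rangle\,\langle \bS M,M\rangle .
\end{equation*}
It therefore suffices to verify $\langle \bS\dbeps,\dbeps\rangle < \Delta^2(\dbeps) = \Delta_P + \Delta_{\min}^2$. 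This comparison is the main obstacle: one has to check that $\Delta_P$, as defined in the model, dominates the quadratic form $\langle \bS\dbeps,\dbeps\rangle$, possibly after normalizing it as the standard VP invariant $\Delta_P^2 = \langle \bS\dbeps,\dbeps\rangle$ by absorbing constants into $\bS$. Once this holds, the strict gap $\Delta_{\min}^2>0$ yields
\begin{equation*}
    \langle \cA\eta,\eta\rangle \ge c\,\frac{\Delta_{\min}^2}{\Delta^2}\cdot\frac{2}{e^2}|M|^2 \ge c'\,|\xi|^2|\eta|^2 ,
\end{equation*}
using $|(\eta\otimes\xi)^{\rm sym}|^2 \ge \frac12|\eta|^2|\xi|^2$. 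The constant $c'$ is uniform in $x$ because $\dbeps(\bv_0)$ is bounded on $\oG$.

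\emph{Step 3: conclusion.} The bound from Step 2 is exactly strong ellipticity in the sense of \autoref{def:strong normal ell}. For parameter-ellipticity, note that $\cA(x,\xi)$ is real symmetric with $\sigma(\cA(x,\xi)) \subset [c',\infty)$ for $|\xi|=1$. Hence $\sigma(\cA(x,\xi)) \subset \Sigma_\phi$ for every $\phi>0$, and the parameter-ellipticity angle is $\phi_{-\bAFI(\bu_0)} = 0$.
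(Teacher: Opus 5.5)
Your argument is essentially the paper's. Your Cauchy--Schwarz inequality in the $\bS$-inner product is the second part of \eqref{eq:tri^2}, and the gap $\Delta^2-\langle\bS\dbeps,\dbeps\rangle$ is the factor $\Delta_{\min}^2$ in \eqref{eq:aux est ell}. Your eigenvalue bound $\langle\bS M,M\rangle\ge\frac{2}{e^2}|M|^2$ together with $|M|^2\ge\frac12|\eta|^2|\xi|^2$ replaces the paper's explicit formula $\tri^2(\xi\otimes x)=(\xi\cdot x)^2+\frac{1}{e^2}|x|^2$, and both give the same constant.

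The step you call the main obstacle should not be left as a hedge, and absorbing constants into $\bS$ cannot settle it. The literal $\Delta_P=\frac12\dbeps':\dbeps'$ from Section 2 has no $(\tr\dbeps)^2$ term, so it never dominates $\dbeps^\top\bS\dbeps$. Concretely, at a point where $\dbeps(\bv_0)=\lambda I$ one has $\Delta^2=\Delta_{\min}^2$ and $\bS\dbeps=2\lambda I$. The form at $\xi=\eta=e_1$ is then a positive multiple of $(1+e^{-2})\Delta_{\min}^2-4\lambda^2$, which is negative for large $\lambda$.

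What the paper uses tacitly, following \cite{BDHH:22}, is
\begin{equation*}
\Delta^2(\dbeps)=\dbeps^\top\bS\dbeps+\Delta_{\min}^2=\tri^2(\dbeps)+\Delta_{\min}^2 .
\end{equation*}
This is the only reading under which the rank-one term in \eqref{eq:coeffs principal part fastice op} is the derivative of $\bS\dbeps/\Delta(\dbeps)$, because that requires $\partial\Delta/\partial\dbeps=\bS\dbeps/\Delta$. With this convention the gap is identically $\Delta_{\min}^2$, and you should state it rather than treat it as something to check.

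Two minor points. Strong ellipticity must hold for complex $\eta=x+\mri y$; reduce this to your real computation via $\re(\cA\eta|\eta)=\langle\cA x,x\rangle+\langle\cA y,y\rangle$, which holds because $\cA$ is real symmetric, as the paper does. Your explicit lower bound $\exp(-c^*(1+\delta))$ for $\tP$ assumes a lower bound on $A_0$ that is not in the hypotheses; positivity and continuity of $\tP$ on $\oG$ are enough.
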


\begin{proof}
First, we recall the principal part of $-\bAFI(\bu_0)$ given by $-\bAFI_{\#}(x,\xi) = \sum_{k,l=1}^2 -a_{ij}^{kl}(x) \xi_k \xi_l$, where the coefficients $a_{ij}^{kl}$ were introduced in \eqref{eq:coeffs principal part fastice op}.
It can be verified that the $a_{ij}^{kl}$ satisfy the symmetries 
\begin{equation}\label{eq:symms coeffs fastice op}
    a_{ij}^{kl} = a_{ji}^{lk} = a_{kl}^{ij} = a_{kj}^{il} = a_{il}^{kj} = a_{lk}^{ji}.
\end{equation}
Thus, the symbol of the principal part can be represented as
\begin{equation*}
    \bAFI_{\#}(x,\xi) = \begin{pmatrix}
    a_{11}^{11} \xi_1 ^2 + 2 a_{11}^{12} \xi_1 \xi_2 + a_{11}^{22} \xi_2 ^2 & a_{11}^{12} \xi_1 ^2 + (a_{12}^{12} + a_{11}^{22}) \xi_1 \xi_2 + a_{12}^{22} \xi_2 ^2 \\
    a_{11}^{12} \xi_1 ^2 + (a_{12}^{12} + a_{11}^{22}) \xi_1 \xi_2 + a_{12}^{22} \xi_2 ^2 & a_{11}^{22} \xi_1 ^2 + 2 a_{12}^{22} \xi_1 \xi_2 + a_{22}^{22} \xi_2 ^2
    \end{pmatrix}.
\end{equation*}
To simplify notation, for $\bd \in \bR^{2 \times 2}$, we introduce $\bd_\rI \coloneqq \bd_{11} + \bd_{22}$, $\bd_\rII \coloneqq \bd_{11} - \bd_{22}$ and $\bd_{\rIII} \coloneqq \frac{\bd_{12} + \bd_{21}}{2}$.
As in \cite[(4.4) and (4.5)]{BDHH:22}, we then obtain 
\begin{equation}\label{eq:tri^2}
    \bd^\top \bS \bd = \bd_\rI^2 + \frac{1}{e^2}(\bd_\rII^2 + 4 \bd_\rIII^2) \eqqcolon \tri^2(\bd) \tand (\bd^\top (\bS \dbeps))^2 \le \tri^2(\bd) \tri^2(\dbeps).
\end{equation}
By virtue of $h_0 \ge \kappa > 0$, we find that $\frac{\tP(h_0,A_0)}{2 \rho h_0 \Delta^3(\dbeps)}$ is real-valued, bounded, continuous, and bounded from below by a constant $c > 0$, so the previous estimate leads to
\begin{equation}\label{eq:aux est ell}
    \sum_{i,j,k,l=1}^2 -a_{ij}^{kl} \bd_{ik} \bd_{jl} = \frac{\tP(h_0,A_0)}{2 \rho h_0 \Delta^3(\dbeps)} \bigl(\Delta^2(\dbeps) \bd^\top \bS \bd - (\bd^\top \bS \dbeps)^2\bigr) \ge c \Delta_{\min}^2 \Delta^2(\bd).
\end{equation}
We now capitalize on \eqref{eq:aux est ell} to show the strong ellipticity.
Let $\xi \in \bR^2$ and $\eta \in \bC^2$ such that $|\xi| = |\eta| = 1$, and for $i=1,2$, define $\eta_i \coloneqq x_i + \mri y_i$.
Thanks to the symmetries of the coefficients \eqref{eq:symms coeffs fastice op}, and using the estimate from \eqref{eq:aux est ell} joint with $|\xi| = 1$ and $|\eta| = 1$, which imply $\tri^2(\xi \otimes x) = (\xi \cdot x)^2 + \frac{1}{e^2} |x|^2$ and $\tri^2(\xi \otimes x) + \tri^2(\xi \otimes y) \ge \frac{1}{e^2}$, respectively, we find that
\begin{equation*}
    \begin{aligned}
        \re(-\bAFI_{\#}(x,\xi) \eta | \eta) 
        &= \sum_{i,j,k,l=1}^2 -a_{ij}^{kl} (\xi \otimes x)_{jl} (\xi \otimes x)_{ik} -a_{ij}^{kl} (\xi \otimes y)_{jl} (\xi \otimes y)_{ik}\\
        &\ge c \Delta_{\min}^2 \bigl((\triangle^2(\xi \otimes x) + \triangle^2(\xi \otimes y) \bigr) \ge \frac{c \Delta_{\min}^2}{e^2}.
    \end{aligned}
\end{equation*}
This shows that $-\bAFI(\bu_0)$ is strongly elliptic. 
By the symmetry of $-\bAFI_{\#}$, we get $\sigma(-\bAFI_{\#}(x,\xi)) \subset \bR_+$ for all $x \in \oG$ and $\xi \in \bR^2$ with $|\xi| = 1$.
Thus, $-\bAFI(\bu_0)$ is parameter-elliptic with $\phi_{-\bAFI(\bu_0)} = 0$.
\end{proof}

Next, we discuss the strong normal ellipticity.

\begin{lem}\label{lem:strong normal ell of the Hibler op}
Let $\bu_0 \in \rC^1(\oG)^2 \times \rC(\oG) \times \rC(\oG)$ be such that $h_0 \ge \kappa$, and recall from \eqref{eq:coeffs principal part fastice op} the coefficients $a_{ij}^{kl}(\bu_0)$ of the principal part of the linearized fastice operator~$\bAFI(\bu_0)$.
Then for $x \in \dG$, $\xi$, $\nu \in \bR^2$ such that $|\xi| = |\nu| = 1$ as well as~$(\xi|\nu) = 0$, and $u$, $v \in \bC^2$, it holds that
\begin{equation*}
    \begin{aligned}
        \re\left(\sum_{i,j,k,l=1}^2 -a_{ij}^{kl}(\bu_0)(\xi_l u_j - \nu_l v_j) \overline{(\xi_k u_i - \nu_k v_i)}\right) 
        &\ge 0, \tand\\
        \re\left(\sum_{i,j,k,l=1}^2 -a_{ij}^{kl}(\bu_0)(\xi_l u_j - \nu_l v_j) \overline{(\xi_k u_i - \nu_k v_i)}\right) 
        &> 0, \tif \im(u|v) \neq 0.
    \end{aligned}
\end{equation*}
In particular, the negative fastice operator is strongly normally elliptic in the sense of \autoref{def:strong normal ell}.
\end{lem}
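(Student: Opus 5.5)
Define the complex matrix $\bd \in \bC^{2\times 2}$ by $\bd_{jl} \coloneqq \xi_l u_j - \nu_l v_j$, i.e., $\bd = u \otimes \xi - v \otimes \nu$. The quantity to estimate is then the quadratic form $\sum_{i,j,k,l} -a_{ij}^{kl} \bd_{jl}\overline{\bd_{ik}}$. Write $\bd = \bd^{\re} + \mri \bd^{\im}$ with real matrices $\bd^{\re} = \re u \otimes \xi - \re v \otimes \nu$ and $\bd^{\im} = \im u \otimes \xi - \im v\otimes \nu$. Since the coefficients $-a_{ij}^{kl}$ are real and symmetric under $(i,k)\leftrightarrow(j,l)$ by \eqref{eq:symms coeffs fastice op}, the mixed terms cancel in the real part. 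This gives
\begin{equation*}
    \re\Bigl(\sum_{i,j,k,l} -a_{ij}^{kl}\bd_{jl}\overline{\bd_{ik}}\Bigr) = Q(\bd^{\re}) + Q(\bd^{\im}), \qquad Q(\bd) \coloneqq \sum_{i,j,k,l} -a_{ij}^{kl}\bd_{ik}\bd_{jl}.
\end{equation*}
By the estimate \eqref{eq:aux est ell} from the proof of \autoref{prop:ell of fast ice op}, which rests on \eqref{eq:tri^2} and the Cauchy--Schwarz type inequality there, we have $Q(\bd) \ge c\Delta_{\min}^2 \tri^2(\bd) \ge 0$ for real $\bd$. This yields the first (nonnegativity) assertion at once.

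For the strict inequality, it suffices to show that $\tri^2(\bd^{\re}) + \tri^2(\bd^{\im}) > 0$ whenever $\im(u|v) \ne 0$. By \eqref{eq:tri^2}, $\tri^2$ is a sum of squares which vanishes on a real matrix $\bd$ only if $\bd_\rI = \bd_\rII = \bd_\rIII = 0$, i.e., only if $\bd$ is skew-symmetric, $\bd = \lambda J$ with $J$ the rotation by $\pi/2$. So suppose both $\bd^{\re}$ and $\bd^{\im}$ were skew. Then $\bd = u\otimes\xi - v\otimes \nu = \lambda J$ for some $\lambda \in \bC$. Since $\xi,\nu$ is an orthonormal basis, testing with $\xi$ and $\nu$ gives $u = \lambda J\xi$ and $v = -\lambda J\nu$. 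As $J\nu = \pm\xi$ and $J\xi = \mp\nu$ with real vectors, we obtain
\begin{equation*}
    (u|v) = -|\lambda|^2 (J\xi | J\nu) = -|\lambda|^2 (\xi|\nu) = 0,
\end{equation*}
whichever convention for the inner product is used. In particular $\im(u|v) = 0$, a contradiction. Therefore $Q(\bd^{\re}) + Q(\bd^{\im}) \ge c\Delta_{\min}^2(\tri^2(\bd^{\re}) + \tri^2(\bd^{\im})) > 0$.

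Finally, strong normal ellipticity in the sense of \autoref{def:strong normal ell} combines the strong ellipticity from \autoref{prop:ell of fast ice op} with the boundary condition just verified. The main obstacle is the kernel characterization of $\tri^2$: the coercivity only controls the symmetric part of $\bd$, so I must check carefully that a skew $\bd = u\otimes\xi - v\otimes\nu$ forces $(u|v)$ to be real. This is where the computation with the orthonormal frame $\xi,\nu$ and $J$ enters.
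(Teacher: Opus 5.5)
Your proof is correct and follows the paper's argument. You split $\bd = u\otimes\xi - v\otimes\nu$ into real and imaginary parts, bound each part from below via \eqref{eq:aux est ell} in terms of $\tri^2$, and show that the degenerate case forces $\im(u|v)=0$; the only difference is in that last step, where the paper uses just the vanishing diagonal entries and eliminates coordinates after assuming $\xi_1 \neq 0$, whereas you use the full skew-symmetry $\bd = \lambda J$ and test against the orthonormal frame $\xi,\nu$ to get even $(u|v)=0$ without coordinates.
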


\begin{proof}
We consider $x \in \dG$, $\xi$, $\nu \in \bR^2$ with $|\xi| = |\nu| = 1$ and $(\xi | \nu) = 0$ as well as $u$, $v \in \bC^2$.
In the sequel, we also write $u_i = x_i + \mri y_i$ and $v_i = \tilde{x}_i + \mri \tilde{y}_i$.
The symmetries of the $a_{ij}^{kl}$ from \eqref{eq:symms coeffs fastice op} and \eqref{eq:aux est ell} yield
\begin{equation*}
    \re \Bigl(\sum \limits_{i,j,k,l=1}^2 -a_{ij}^{kl}(\xi_l u_j - \nu_l v_j)\overline{(\xi_k u_i - \nu_k v_i)}\Bigr)
    \ge c \Delta_{\min}^2 \Bigl((\triangle^2(\xi \otimes x - \nu \otimes \tilde{x}) + \triangle^2(\xi \otimes y - \nu \otimes \tilde{y}) \Bigr) \ge 0,
\end{equation*}
so the first part of the assertion is implied.
With regard to the second part, we focus on the case of equality with zero in the preceding estimate and infer that 
\begin{equation}\label{eq:aux rel strong normal ell}
    \im(u \vert v) = \tilde{x}_1 y_1 - x_1 \tilde{y}_1 +  \tilde{x}_2 y_2 - x_2 \tilde{y}_2 = 0.
\end{equation}
For $\bd \in \bR^{2 \times 2}$, it follows from $\tri^2(\bd) = 0$ that $\bd_{11} = \bd_{22} = 0$.
Thus, the equality of the above estimate with zero results in
\begin{equation}\label{eq:second aux rel strong normal ell}
    \xi_1 x_1 - \nu_1 \tilde{x}_1 =  \xi_2 x_2 - \nu_2 \tilde{x}_2=  \xi_1 y_1 - \nu_1 \tilde{y}_1=  \xi_2 y_2 - \nu_2 \tilde{y}_2 = 0.
\end{equation}
By $| \xi | = 1$, we have either $\xi_1 \neq 0$ or $\xi_2 \neq 0$.
Without loss of generality, suppose that $\xi_1 \neq 0$, since the other case can be handled analogously.
Then $(\xi | \nu) = 0$ and $| \nu | = 1$ require that $\nu_2 \neq 0$.
By \eqref{eq:second aux rel strong normal ell}, we get $x_1 = \frac{\nu_1}{\xi_1} \tilde{x}_1$, $\tilde{x}_2 = \frac{\xi_2}{\nu_2} x_2$, $y_1 = \frac{\nu_1}{\xi_1} \tilde{y}_1$ and $\tilde{y}_2 = \frac{\xi_2}{\nu_2} y_2$.
An insertion into \eqref{eq:aux rel strong normal ell} reveals the validity of the second part of the assertion.
\end{proof}

We are now in position to discuss the maximal $\rL^p$-regularity of the $\rL^q$-realization of the fastice operator defined as follows:
For $\bu_0 \in \rC^1(\oG)^2 \times \rC(\oG) \times \rC(\oG)$ with $h_0 \ge \kappa$, the $\rL^q$-realization of the linearized fastice operator $\bAFI(\bu_0)$ subject to Dirichlet boundary conditions on $\dG$ is defined by
\begin{equation}\label{eq:Lq-realization of the fastice op}
        \left[\AFID(\bu_0)\right] \bv \coloneqq \left[\bAFI(\bu_0)\right] \bv, \twith\\
        \rD(\AFID(\bu_0)) \coloneqq \rW^{2,q}(G)^2 \cap \rW_0^{1,q}(G)^2.
\end{equation}

The proposition below asserts the boundedness of the $\cH^\infty$-calculus of $-\AFID(\bu_0)$ up to a shift.

\begin{prop}\label{prop:Hinfty fast ice op}
Let $q \in (1,\infty)$, consider $\bu_0 \in \rC^{1,\alpha}(\oG)^2 \times \rC^\alpha(\oG) \times \rC^\alpha(\oG)$ for some $\alpha > 0$, and with $h_0 \ge \kappa$, and recall $\AFID(\bu_0)$ from \eqref{eq:Lq-realization of the fastice op}.  
Then there is $\omega_1 \in \bR$ so that for all $\omega > \omega_1$, we have $-\AFID(\bu_0) + \omega \in \cH^\infty(\rL^q(G)^2)$ with $\phi_{-\AFID(\bu_0) + \omega}^\infty < \nicefrac{\pi}{2}$.
In particular, $-\AFID(\bu_0) + \omega$ is $\cR$-sectorial on~$\rL^q(G)^2$, with $\cR$-angle $\phi_{-\AFID(\bu_0) + \omega}^\cR \le \phi_{-\AFID(\bu_0) + \omega}^\infty < \nicefrac{\pi}{2}$, so $-\AFID(\bu_0) + \omega$ has maximal $\rL^p$-regularity.
\end{prop}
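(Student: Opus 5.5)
The plan is to verify the hypotheses of \autoref{lem:max reg boundary value problem}, the result on boundedness of the $\Hinfty$-calculus for the $\rL^q$-realization of a strongly normally elliptic second-order system with Dirichlet boundary conditions (the theory of \cite{DHP:03}). Once these are checked, the lemma yields $\omega_1$ with the $\Hinfty$-angle below $\nicefrac{\pi}{2}$ for every shift $\omega > \omega_1$.

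\textbf{Step 1: regularity of the coefficients.} For $\bu_0 \in \rC^{1,\alpha}(\oG)^2 \times \rC^\alpha(\oG) \times \rC^\alpha(\oG)$ we have $\dbeps(\bv_0) \in \rC^\alpha(\oG)^{2\times 2}$. The map $\dbeps \mapsto \Delta(\dbeps) = \sqrt{\Delta_P + \Delta_{\min}^2}$ is smooth with $\Delta \ge \Delta_{\min} > 0$, and $(h,A) \mapsto \tP(h,A)$ is smooth. Since $h_0 \ge \kappa$, the expression $\frac{1}{\rho h_0}$ is smooth in $h_0$ as well. Composing with Hölder functions then shows that the coefficients $a_{ij}^{kl}(\dbeps(\bv_0),\tP(h_0,A_0))$ from \eqref{eq:coeffs principal part fastice op} lie in $\rC^\alpha(\oG)$, in particular they are bounded and uniformly continuous. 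This is the regularity the top-order coefficients need in the cited theory.

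\textbf{Step 2: the lower-order term.} The first-order term in \eqref{eq:lin fastice op} has coefficient $\frac{\del_j \tP(h_0,A_0)}{2\rho h_0 \Delta(\dbeps(\bv_0))}$. As stated, $h_0, A_0$ are only $\rC^\alpha$, so $\del_j \tP$ need not be a function, and I expect this to be the main technical point. I would handle it first as follows. In the application we have $\rX_{\gamma,\mu} \hookrightarrow \rC^{1,\alpha}(\oG)^4$, so $h_0, A_0 \in \rC^{1,\alpha}$, and the coefficient is then in $\rC(\oG)$. The first-order term is then a perturbation of relative bound zero, $\rW^{2,q} \to \rW^{1,q} \hookrightarrow \rL^q$, combined with interpolation. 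Perturbation results for the $\Hinfty$-calculus, or directly the lower-order-term allowance in \autoref{lem:max reg boundary value problem}, then absorb it at the cost of enlarging $\omega_1$. If the lemma as stated already tolerates bounded measurable lower-order coefficients, this step reduces to bookkeeping. Otherwise I would interpret $\del_j \tP$ in the weaker (divergence-form) sense, or strengthen the regularity assumption to $\rC^{1,\alpha}$ on $h_0, A_0$.

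\textbf{Step 3: ellipticity and boundary condition.} \autoref{prop:ell of fast ice op} shows that the principal part is strongly elliptic and parameter-elliptic with angle $0$. \autoref{lem:strong normal ell of the Hibler op} shows that it is strongly normally elliptic, and \autoref{lem:strong normal ell implies Lopatinskii-Shapiro} then gives the Lopatinskii–Shapiro condition for the Dirichlet boundary operator at every $x \in \dG$ and every angle $\phi > 0$. Since $\dG \in \rC^2$, this suffices for second-order systems with Dirichlet data. Applying \autoref{lem:max reg boundary value problem} gives $-\AFID(\bu_0) + \omega \in \Hinfty(\rL^q(G)^2)$ with $\Hinfty$-angle $< \nicefrac{\pi}{2}$; indeed, any angle greater than $\phi_{-\bAFI(\bu_0)} = 0$ is admissible.

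\textbf{Step 4: consequences.} The remaining assertions are standard. A bounded $\Hinfty$-calculus on the UMD space $\rL^q$, $q \in (1,\infty)$, implies $\cR$-sectoriality with $\phi^\cR \le \phi^\infty$ (Kalton–Weis). Weis' theorem then turns $\cR$-angle $< \nicefrac{\pi}{2}$ into maximal $\rL^p$-regularity.
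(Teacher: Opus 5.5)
Your proposal is correct and follows essentially the same route as the paper, which likewise only checks conditions (S) and (E) of \autoref{lem:max reg boundary value problem}. It does so using \autoref{prop:ell of fast ice op}, \autoref{lem:strong normal ell of the Hibler op} and \autoref{lem:strong normal ell implies Lopatinskii-Shapiro}, with H\"older continuity of the top-order coefficients coming from $\bu_0 \in \rC^{1,\alpha}(\oG)^2 \times \rC^\alpha(\oG) \times \rC^\alpha(\oG)$.

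Your caveat in Step~2 is well taken. The paper simply asserts that the lower-order coefficients are bounded, but $\del_j \tP(h_0,A_0)$ is not a bounded function for merely H\"older $h_0, A_0$. One really needs $h_0, A_0 \in \rC^1(\oG)$, which is available in the application via $\rX_{\gamma,\mu} \hookrightarrow \rC^{1,\alpha}(\oG)^4$, so strengthening the hypothesis is the right fix. A divergence-form reinterpretation would not work, because it is not compatible with the domain $\rW^{2,q}(G)^2 \cap \rW_0^{1,q}(G)^2$.
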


\begin{proof}
The proof is a consequence of \autoref{lem:max reg boundary value problem} upon noting that the strong normal ellipticity in the sense of \autoref{def:Lopatinskii Shapiro} has been verified in \autoref{lem:strong normal ell of the Hibler op}, which in turn implies the parameter-ellipticity and the validity of the Lopatinskii-Shapiro condition in the present case by \autoref{lem:strong normal ell implies Lopatinskii-Shapiro}.
Moreover, with regard to the form of the top-order coefficients $a_{ij}^{kl}(\dbeps,\tP)$ as introduced in \eqref{eq:coeffs principal part fastice op}, the assumption that $\bu_0 \in \rC^{1,\alpha}(\oG)^2 \times \rC^\alpha(\oG) \times \rC^\alpha(\oG)$ for some $\alpha > 0$ implies the H\"older-continuity of the top-order coefficients.
Similarly, the boundedness of the lower-order coefficients required for \autoref{lem:max reg boundary value problem} is implied.
\end{proof}

We observe that the assumptions on $\bu_0$ can be relaxed if we are merely interested in the $\cR$-sectoriality, or, equivalently, the maximal $\rL^p$-regularity, of $-\AFID(\bu_0)$, see also \cite[Theorem~8.2]{DHP:03} and the discussion preceding \autoref{lem:max reg boundary value problem}.

\begin{rem}\label{rem:max reg fastice op}
Let $q \in (1,\infty)$, consider $\bu_0 \in \rC^1(\oG)^2 \times \rC(\oG) \times \rC(\oG)$ with $h_0 \ge \kappa$, and recall $\AFID(\bu_0)$ from \eqref{eq:Lq-realization of the fastice op}.  
Then there exists $\omega_1 \in \bR$ such that for all $\omega > \omega_1$, the shifted operator $-\AFID(\bu_0) + \omega$ is $\cR$-sectorial on $\rL^q(G)^2$, with $\cR$-angle $\phi_{-\AFID(\bu_0) + \omega} < \nicefrac{\pi}{2}$.
In other words, $-\AFID(\bu_0) + \omega$ has maximal $\rL^p$-regularity on $\rL^q(G)^2$.
\end{rem}

After addressing the fastice operator, we now tackle the complete linearized problem.
To this end, we invoke the Neumann Laplacian operator that is relevant for the terms associated with the parabolic regularization in the balance laws.
It is defined by
\begin{equation}\label{eq:Neumann Laplacian}
    \Delta_\rN f \coloneqq \Delta f \tfor f \in \rD(\Delta_\rN) \coloneqq \rW_\rN^{2,q}(G).
\end{equation}

For $\bu_0$ as above, the linearized fastice operator $\AFID(\bu_0)$ as in \eqref{eq:lin fastice op}, and the Neumann Laplacian as made precise in \eqref{eq:Neumann Laplacian}, the operator matrix under consideration is given by
\begin{equation}\label{eq:op matrix landfast ice}
    \begin{aligned}
        \ALFI(\bu_0) 
        &= \begin{pmatrix}
            -\AFID(\bu_0) & \frac{\del_h P'(h_0,A_0)}{2 \rho h_0} \nabla & \frac{\del_A P'(h_0,A_0)}{2 \rho h_0} \nabla\\
            0 & -d_\rh \Delta_\rN & 0\\
            0 & 0 & -d_\rA \Delta_\rN
        \end{pmatrix}, \twith \rD(\ALFI(\bu_0)) = \rX_1.
    \end{aligned}
\end{equation}

Below, we discuss the properties of $\ALFI(\bu_0)$.

\begin{prop}\label{prop:bdd Hoo-calculus op matrix landfast ice}
Consider $q \in (1,\infty)$ as well as $\bu_0 \in \rC^{1,\alpha}(\oG)^2 \times \rC^\alpha(\oG) \times \rC^\alpha(\oG)$ for some $\alpha > 0$, and with $h_0 \ge \kappa$.
Then there exists some $\omega_0 \in \bR$ such that for all $\omega > \omega_0$, we have $\ALFI(\bu_0) + \omega \in \cH^\infty(\rX_0)$.
In particular, $\ALFI(\bu_0) + \omega$ has maximal $\rL^p$-regularity on $\rX_0$ for all $p \in (1,\infty)$.
\end{prop}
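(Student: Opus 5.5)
The operator matrix $\ALFI(\bu_0)$ in \eqref{eq:op matrix landfast ice} is upper triangular. Its diagonal blocks are $-\AFID(\bu_0)$ and $-d_\rh\Delta_\rN$, $-d_\rA\Delta_\rN$, and the only off-diagonal entries are first-order coupling terms. My plan is to split it as
\begin{equation*}
    \ALFI(\bu_0) = \rA_{\diag} + \rB, \qquad \rA_{\diag} \coloneqq \diag\bigl(-\AFID(\bu_0), -d_\rh \Delta_\rN, -d_\rA \Delta_\rN\bigr),
\end{equation*}
where $\rB$ contains only the two gradient terms in the first row. I will show that a shift of $\rA_{\diag}$ has a bounded $\cH^\infty$-calculus on $\rX_0$, and then treat $\rB$ as a lower-order perturbation.

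For the diagonal part, \autoref{prop:Hinfty fast ice op} already gives $\omega_1$ such that $-\AFID(\bu_0)+\omega \in \cH^\infty(\rL^q(G)^2)$ with angle $<\nicefrac{\pi}{2}$ for all $\omega>\omega_1$. For the Neumann Laplacian on a $\rC^2$ domain, $-\Delta_\rN + \omega \in \cH^\infty(\rL^q(G))$ with angle $0$ for every $\omega>0$. This is classical, and it also follows from \autoref{lem:max reg boundary value problem} with constant coefficients and the Neumann boundary operator satisfying Lopatinskii--Shapiro. Since the ground space $\rX_0$ is a product and $\rA_{\diag}$ acts componentwise with domain $\rX_1$, the functional calculus is also componentwise. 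Hence $\rA_{\diag}+\omega \in \cH^\infty(\rX_0)$ for $\omega>\max(\omega_1,0)$, with angle equal to the maximum of the three angles, which is $<\nicefrac{\pi}{2}$.

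For the perturbation, the coefficients $\frac{\del_h P'(h_0,A_0)}{2\rho h_0}$ and $\frac{\del_A P'(h_0,A_0)}{2\rho h_0}$ are explicit smooth functions of $(h_0,A_0)$ divided by $h_0\ge\kappa$, so they are bounded and continuous on $\oG$. Thus $\rB$ maps $\rW^{1,q}(G)\times\rW^{1,q}(G)$ boundedly into $\rL^q(G)^2$. Since $\rW^{1,q}$ is (up to equivalent norms) the complex interpolation space $[\rL^q, \rW_\rN^{2,q}]_{1/2}$, or in any case embeds into $[\rX_0,\rX_1]_\theta$ for any $\theta>\nicefrac12$, $\rB$ is bounded from $[\rX_0,\rX_1]_\theta$ to $\rX_0$ for some $\theta<1$. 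This is the standard lower-order perturbation situation: if $\rA+\omega\in\cH^\infty$ and $\rB$ is bounded from $\rD((\rA+\omega)^\theta)$ with $\theta<1$, then $\rA+\rB+\omega'\in\cH^\infty$ for $\omega'$ large, as in \cite[Prop.~3.3.9]{PS:16} or the perturbation results in \cite{DHP:03}. This yields the required $\omega_0$.

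Alternatively, and more concretely, I can use the triangular structure. For $\lambda$ in a sector, the resolvent of $\ALFI(\bu_0)+\omega$ is written explicitly using the diagonal resolvents together with the off-diagonal product $(\lambda-\AFID+\omega)^{-1}\rB_{12}(\lambda-d_\rh\Delta_\rN+\omega)^{-1}$. The $\cH^\infty$-estimates then reduce to the diagonal calculi plus a term that decays like $|\lambda|^{-1/2}$ in the gradient, which is integrable in the Dunford integral.

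The main obstacle is verifying this perturbation step cleanly, namely identifying the domains of fractional powers, or the interpolation spaces, so that the gradient coupling is genuinely of relative order $<1$. I would first try the abstract perturbation theorem. Maximal $\rL^p$-regularity for all $p\in(1,\infty)$ then follows because $\rX_0$ is UMD: a bounded $\cH^\infty$-calculus with angle $<\nicefrac{\pi}{2}$ implies $\cR$-sectoriality with the same angle bound, and hence maximal regularity by Weis' theorem.
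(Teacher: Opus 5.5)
Your proposal is correct and follows essentially the same route as the paper. The diagonal part $\diag(-\AFID(\bu_0),-d_\rh\Delta_\rN,-d_\rA\Delta_\rN)+\omega$ has a bounded $\cH^\infty$-calculus of angle $<\nicefrac{\pi}{2}$ by \autoref{prop:Hinfty fast ice op} and \autoref{lem:max reg boundary value problem}. The gradient couplings are bounded on $\rD(A_{\diag}^{\nicefrac{1}{2}}) \simeq [\rX_0,\rX_1]_{\nicefrac{1}{2}} = \rW_0^{1,q}(G)^2\times\rW^{1,q}(G)\times\rW^{1,q}(G)$, and a lower-order perturbation theorem (the paper uses \cite[Prop.~13.1]{KW:04}) then gives the claim for large shifts.

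One correction: the embedding you need runs the other way from what you wrote. It is $[\rX_0,\rX_1]_\theta \hookrightarrow \rW_0^{1,q}(G)^2\times\rW^{1,q}(G)\times\rW^{1,q}(G)$ for $\theta\ge\nicefrac{1}{2}$. The identification of $\rD(A_{\diag}^\theta)$ with $[\rX_0,\rX_1]_\theta$ comes from the bounded imaginary powers implied by the $\cH^\infty$-calculus.
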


We observe that the maximal regularity of $\ALFI(\bu_0) + \omega$ can be obtained under the same assumptions on $\bu_0$ as in \autoref{rem:max reg fastice op}.

\begin{proof}[Proof of \autoref{prop:bdd Hoo-calculus op matrix landfast ice}]
The idea to show the boundedness of the $\cH^\infty$-calculus is to capitalize on the respective properties of the fastice operator $\AFID(\bu_0)$ and the Neumann Laplacian operator $\Delta_\rN$.
Thanks to its ellipticity properties, it follows from \autoref{lem:max reg boundary value problem} that $-\Delta_\rN + \omega \in \cH^\infty(\rL^q(G))$ with $\phi_{-\Delta_\rN + \omega}^\infty < \nicefrac{\pi}{2}$ for all $\omega > 0$.
Thus, from \autoref{prop:Hinfty fast ice op}, we deduce that there is $\omega_0 \ge 0$ so that for all $\omega > \omega_0$, we have 
\begin{equation*}
    A_{\diag} \coloneqq \diag(-\AFID(\bu_0),-d_\rh \Delta_\rN,-d_\rA \Delta_\rN) + \omega \in \cH^\infty(\rX_0), \twith \phi_{A_{\diag}}^\infty < \nicefrac{\pi}{2}.
\end{equation*}
As a consequence, the fractional power domains are isomorphic to the complex interpolation spaces, see, for example, \cite[Sec.~3.3]{PS:16}.
Together with standard interpolation theory, see, e.g., \cite[Ch.~2]{Tri:78}, this yields
\begin{equation*}
    \rD(A_{\diag}^{\nicefrac{1}{2}}) \simeq [\rX_0,\rX_1]_{\nicefrac{1}{2}} = \rW_0^{1,q}(G)^2 \times \rW^{1,q}(G) \times \rW^{1,q}(G).
\end{equation*}
Now, let $\bu \in \rD(A_{\diag}^{\nicefrac{1}{2}})$.
Thanks to the assumptions on $\bu_0$, we find that
\begin{equation*}
    \left\| \frac{\del_h P'(h_0,A_0)}{2 \rho h_0} \nabla h \right\|_{\rL^q(G)} \le C \cdot \| h \|_{\rW^{1,q}(G)} \le C \cdot \| (-d_\rh \Delta_\rN + \omega)^{\nicefrac{1}{2}} h \|_{\rL^q(G)} \le C \cdot \| A_{\diag}^{\nicefrac{1}{2}} \bu \|_{\rX_0}
\end{equation*}
for some (generic) constant $C > 0$.
The other off-diagonal term can be estimated in the exact same way.
Thus, we have shown that the operator $\ALFI(\bu_0)$ is relatively bounded with respect to a fractional power of $A_{\diag}$.
Perturbation theory for the boundedness of the $\cH^\infty$-calculus, see, e.g., \cite[Prop.~13.1]{KW:04}, then yields that $\ALFI(\bu_0) + \omega \in \cH^\infty(\rX_0)$ with $\phi_{\ALFI(\bu_0) + \omega}^\infty < \nicefrac{\pi}{2}$ provided $\omega$ is chosen sufficiently large.
\end{proof}

\subsection{Nonlinear estimates and proof of \autoref{thm:loc strong well-posedness}}\label{ssec:nonlin ests & proof of local wp}
\ 

This subsection is dedicated to proving estimates of the nonlinear terms, and to showing the local strong well-posedness result, \autoref{thm:loc strong well-posedness} by invoking the quasilinear theory as recalled in \autoref{lem:loc wp quasilin ACP}.

For this purpose, we first reformulate \eqref{eq:landfast ice model} as an evolution equation.
In fact, recalling the principal variable $\bu = (\bv,h,A)$ we write
\begin{equation}\label{eq:fastice as quasilin evol eq}
    \bu'(t) + A^\LFI(\bu(t))\bu(t) = F(\bu(t)), \tfor t > 0, \tand \bu(0) = \bu_0,
\end{equation}
where
\begin{equation*}
    \begin{aligned}
        A^\LFI(\bu) \bu 
        &= \begin{pmatrix}
        -\AFID(\bu) \bv + \frac{\del_h P'(h,A)}{2 \rho h} \nabla h + \frac{\del_A P'(h,A)}{2 \rho h} \nabla A\\
        -d_\rh \Delta_\rN h\\
        -d_\rA \Delta_\rN A
        \end{pmatrix} \tand\\
        F(\bu)
        &= \begin{pmatrix}
            -(\bv \cdot \nabla) \bv - C_{\cor} \bk \times (\bv - \bv_\ro) + \frac{1}{\rho h}\bigl(f_\ra + f_\ro(\bv) + f_\rb(\bv,h,A)\bigr)\\
            -\div(\bv h)\\
            -\div(\bv A)
        \end{pmatrix}.
    \end{aligned}
\end{equation*}

First, we collect the estimates of the nonlinear terms in the lemma below.

\begin{lem}\label{lem:nonlin ests}
Let $p$, $q \in (1,\infty)$ and $\mu \in (\frac{1}{p},1]$ such that \eqref{eq:cond on p, q and mu} holds true, and recall $V_\mu$ from \eqref{eq:open set V_mu} as well as $\rX_0$ and $\rX_1$ from \eqref{eq:ground & regularity space}.
Then for the nonlinear terms $A^\LFI$ and $F$, it follows that
\begin{equation*}
    (A^\LFI,F) \in \rC^{0,1}(V_\mu;\cL(\rX_1,\rX_0) \times \rX_0),
\end{equation*}
i.e., $A^\LFI$ and $F$ are Lipschitz continuous as maps from $V_\mu$ to the space of bounded linear operators~$\cL(\rX_1,\rX_0)$ from~$\rX_1$ to $\rX_0$, and to the ground space $\rX_0$, respectively.
\end{lem}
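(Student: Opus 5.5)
The plan is to use the embedding $\rX_{\gamma,\mu} \hookrightarrow \rC^{1,\alpha}(\oG)^4$ from \eqref{eq:cond on p, q and mu}, together with the fact that every coefficient is a smooth function of $(\dbeps(\bv),h,A)$ on a region where $\Delta(\dbeps) \ge \Delta_{\min} > 0$ and $h > \kappa$.
Throughout we argue locally: we fix $\bu_0 \in V_\mu$ and a ball $\oB_{\rX_{\gamma,\mu}}(\bu_0,r) \subset V_\mu$. On this ball, $\|\bv\|_{\rC^1}$, $\|h\|_{\rC^1}$ and $\|A\|_{\rC^1}$ are uniformly bounded and $h \ge \kappa$. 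Hence all arguments of the coefficient functions lie in a compact set on which these functions, and their derivatives, are bounded and Lipschitz. (Lipschitz continuity in $\rC^{0,1}(V_\mu;\cdot)$ is understood in this local sense, as required by \autoref{lem:loc wp quasilin ACP}.)

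For $A^\LFI$, fix $\bu_1,\bu_2$ in the ball and $\bw=(\bw_v,\bw_h,\bw_A) \in \rX_1$, and split $(A^\LFI(\bu_1)-A^\LFI(\bu_2))\bw$ into three groups of terms.

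\begin{enumerate}[(i)]
    \item The principal part is $\sum (a_{ij}^{kl}(\bu_1)-a_{ij}^{kl}(\bu_2)) \rD_k\rD_l \bw_{v,j}$. By \eqref{eq:coeffs principal part fastice op}, $a_{ij}^{kl}$ is a $\rC^1$ function of $(\dbeps,\tP,h)$; it is smooth in $\dbeps$ since $\Delta \ge \Delta_{\min}$, and $\tP$ depends smoothly on $(h,A)$. This gives
    \[
        \| a(\bu_1)-a(\bu_2)\|_{\rL^\infty} \le C(\|\nabla\bv_1-\nabla\bv_2\|_\infty+\|h_1-h_2\|_\infty+\|A_1-A_2\|_\infty) \le C\|\bu_1-\bu_2\|_{\rX_{\gamma,\mu}}.
    \]
    Multiplying by $\|\bw_v\|_{\rW^{2,q}}$ yields the desired bound.
    \item The lower-order term $\frac{1}{2\rho h\Delta}\nabla\tP\,\bS\dbeps(\bw_v)$ involves $\nabla h$ and $\nabla A$. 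These are controlled in $\rL^\infty$ and in Lipschitz fashion by the $\rC^1$ embedding, while $\dbeps(\bw_v)\in\rL^q$.
    \item The pressure terms $\frac{\del_h P'}{2\rho h}\nabla \bw_h$ and $\frac{\del_A P'}{2\rho h}\nabla \bw_A$ are handled in the same way, and the Laplacian rows do not depend on $\bu$.
\end{enumerate}

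For $F$, I would treat the terms one by one. The terms $(\bv\cdot\nabla)\bv$, $\div(\bv h)$, $\div(\bv A)$ and the Coriolis term are polynomial in $(\bu,\nabla\bu)$, so they are Lipschitz from $\rC^1$ into $\rL^\infty \hookrightarrow \rL^q$. The drag $f_\ro(\bv)/(\rho h)$ is locally Lipschitz in $\bv$ because $z\mapsto |z|z$ is $\rC^1$. The wind term $f_\ra/(\rho h)$ and the ocean-velocity term are Lipschitz in $h$ through $1/h$. Here the time-dependent data $\bv_\ra,\bv_\ro$ enter only as $t$-dependent factors; for fixed $t$ one needs $|\bv_\ra|^2\in\rL^q$, which holds by $\bv_\ra\in\rL^{2q}$. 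The resulting $t$-dependent Lipschitz constant lies in $\rL^p$ in time, which is exactly what the quasilinear theory with $F$ depending on $t$ permits.

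I expect the main obstacle to be the basal stress $f_\rb$ from \eqref{eq:basal stress}, because of the case distinction at $h_{\crit}$. I would first rewrite it as
\[
    f_\rb = -k_2\frac{\bv}{|\bv|+v_0}(h-h_{\crit})_+\exp(-\alpha_\rb(1-A)).
\]
Each factor is globally Lipschitz. The map $s\mapsto s_+$ is Lipschitz, $A\mapsto \exp(-\alpha_\rb(1-A))$ is smooth on bounded sets, and $\bv\mapsto \bv/(|\bv|+v_0)$ is Lipschitz on $\bR^2$ with constant at most $2/v_0$, since $v_0>0$. All factors are bounded on the ball, so the product is Lipschitz in $\rL^\infty$, and hence in $\rL^q$, with respect to $\|\bu_1-\bu_2\|_{\rC^0}$. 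Combined with the factor $1/(\rho h)$ and $h \ge \kappa$, this gives the bound. Note that only Lipschitz continuity, not differentiability, is available at $h=h_{\crit}$ and $\bv=0$, which is why the lemma is stated in $\rC^{0,1}$.
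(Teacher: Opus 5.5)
Your proposal is correct and follows essentially the same route as the paper. Both argue by term-by-term local Lipschitz estimates based on the embedding $\rX_{\gamma,\mu} \hookrightarrow \rC^{1,\alpha}(\oG)^4$ and the bound $h > \kappa$, with the basal stress $f_\rb$ as the only term needing new work. The paper defers the remaining terms to \cite[Lemma~6.2]{BDHH:22}, which you spell out, and your rewriting via $(h-h_{\crit})_+$ makes explicit the Lipschitz continuity across $h = h_{\crit}$ that the paper uses tacitly.

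As a minor aside, $\bv \mapsto \bv/(|\bv|+v_0)$ is in fact $\rC^1$ at $\bv = 0$, with derivative $v_0^{-1} I$. So the only genuine loss of differentiability comes from the threshold $h = h_{\crit}$.
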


\begin{proof}
The proof is analogous to the one of \cite[Lemma~6.2]{BDHH:22} for most terms, so we only discuss the term~$f_\rb$ related to the basal stress in more detail.

Thus, let $\bu_i = (\bv_i,h_i,A_i) \in V_\mu$, $i=1,2$.
H\"older's inequality in conjunction with the mean value theorem as well as $h_i > \kappa$ yields that
\begin{equation*}
    \begin{aligned}
        &\quad \left\| \frac{1}{\rho h_1}f_\rb(\bv_1,h_1,A_1) - \frac{1}{\rho h_2}f_\rb(\bv_2,h_2,A_2)\right\|_{\rL^q(G)}\\
        &\le C\left(\| h_1 - h_2 \|_{\rL^\infty(G)} \cdot \| f_\rb(\bv_1,h_1,A_1) \|_{\rL^q(G)} + \| f_\rb(\bv_1,h_1,A_1) - f_\rb(\bv_2,h_2,A_2) \|_{\rL^q(G)}\right).
    \end{aligned}
\end{equation*}
Recalling $f_\rb$ from \eqref{eq:basal stress}, and making use of $\bu_1 \in V_\mu$, we find that
\begin{equation}\label{eq:est of basal stress}
    \| f_\rb(\bv_1,h_1,A_1) \|_{\rL^q(G)} \le C \cdot \| h_1 \|_{\rL^q(G)} \le C \cdot \| \bu_1 \|_{\rX_{\gamma,\mu}}.
\end{equation}
On the other hand, additionally invoking that the conditions on $p$, $q$ and $\mu$ as specified in \eqref{eq:cond on p, q and mu} imply that $\rB_{qp}^{2(\mu - \nicefrac{1}{p})}(G) \hookrightarrow \rL^\infty(G)$, we infer that
\begin{equation}\label{eq:diff of basal stress}
    \begin{aligned}
        &\quad \| f_\rb(\bv_1,h_1,A_1) - f_\rb(\bv_2,h_2,A_2) \|_{\rL^q(G)}\\
        &\le C\left(\| \bv_1 - \bv_2 \|_{\rL^\infty(G)}(\| h_1 \|_{\rL^q(G)} + 1) + \| h_1 - h_2 \|_{\rL^\infty(G)} + (\| h_2 \|_{\rL^q(G)} + 1)\| A_1 - A_2 \|_{\rL^\infty(G)}\right)\\
        &\le C\left(\| \bu_1 \|_{\rX_{\gamma,\mu}} + \| \bu_2 \|_{\rX_{\gamma,\mu}} + 1\right)\| \bu_1 - \bu_2 \|_{\rX_{\gamma,\mu}}
    \end{aligned}
\end{equation}
Plugging these estimates into the above one, we find that
\begin{equation*}
    \left\| \frac{1}{\rho h_1}f_\rb(\bv_1,h_1,A_1) - \frac{1}{\rho h_2}f_\rb(\bv_2,h_2,A_2)\right\|_{\rL^q(G)} \le C\left(\| \bu_1 \|_{\rX_{\gamma,\mu}} + \| \bu_2 \|_{\rX_{\gamma,\mu}} + 1\right)\| \bu_1 - \bu_2 \|_{\rX_{\gamma,\mu}},
\end{equation*}
showing the desired Lipschitz continuity of $f_\rb$ and thus completing the proof.
\end{proof}

\begin{proof}[Proof of \autoref{thm:loc strong well-posedness}]
Thanks to the maximal $\rL^p$-regularity of $A^\LFI(\bu_0) + \omega$ as revealed in \autoref{prop:bdd Hoo-calculus op matrix landfast ice} and the estimates of the nonlinear terms $A^\LFI(\bu)$ and $F(\bu)$ as shown in \eqref{lem:nonlin ests}, the assertion of \autoref{thm:loc strong well-posedness} is a consequence of quasilinear theory as recalled in \autoref{lem:loc wp quasilin ACP} upon taking into account the shift for the right-hand side $F(\bu)$ as well.
Note that this does not affect the present local-in-time well-posedness result.
This first yields the result for the quasilinear evolution equation \eqref{eq:fastice as quasilin evol eq} and carries over to the equivalent PDE system \eqref{eq:landfast ice model}.
\end{proof}

\subsection{Remarks on the parabolic-hyperbolic setting}\label{ssec:rems on parabolic-hyperbolic setting}
\

In this section, we briefly elaborate on the situation of the parabolic-hyperbolic setting as also addressed in \cite{Bra:25}, see also \cite{LTT:22} for a different regularization of the stress tensor.

The model without parabolic regularization in the balance laws is given by
\begin{equation}\label{eq:landfast ice model para-hyper}
    \left\{
    \begin{aligned}
        \del_t \bv + (\bv \cdot \nabla) \bv
        &= \frac{1}{\rho h} \div(\bsigma) - C_{\cor} \bk \times (\bv - \bv_\ro)\\
        &\quad + \frac{1}{\rho h}\bigl(f_\ra + f_\ro(\bv) + f_\rb(\bv,h,A)\bigr), &&\tin G \times (0,T),\\
        \del_t h + \div(\bv h)
        &= 0, &&\tin G \times (0,T),\\
        \del_t A + \div(\bv A)
        &= 0, &&\tin G \times (0,T),\\
        \bv 
        &= 0, &&\ton \del G \times (0,T),\\
        \bv(0) = \bv_0, \enspace h(0)
        &= h_0, \enspace A(0) = A_0, &&\tin G.
    \end{aligned}
    \right.
\end{equation}
In contrast to the model in \eqref{eq:landfast ice model}, no boundary conditions on~$h$ and $A$ are imposed.
The functional analytic setting suited for this problem is as follows:
We define the ground space $\rY_0 \coloneqq \rL^q(G)^2 \times \rW^{1,q}(G) \times \rW^{1,q}(G)$ and the regularity space $\rY_1 \coloneqq \rW^{2,q}(G)^2 \cap \rW_0^{1,q}(G)^2 \times \rW^{1,q}(G) \times \rW^{1,q}(G)$.
In this context, we also introduce the trace space $\rY_\gamma = (\rY_0,\rY_1)_{1-\nicefrac{1}{p},p} = \rB_{qp,\rD}^{2-\nicefrac{2}{p}}(G)^2 \times \rW^{1,q}(G) \times \rW^{1,q}(G)$, where the subscript $_\rD$ represents Dirichlet boundary conditions in the situation $2 - \frac{2}{p} > \frac{1}{q}$.
We remark that this is the threshold of the regularity parameter in the Besov space $\rB_{qp}^{2-\nicefrac{2}{p}}(G)$ such that the trace is defined in a strong sense, see, e.g., \cite[Sec.~2.9]{Tri:78}.
Similarly as for the fully parabolic problem, for $\delta > 0$ sufficiently small, we define the open set $V \subset \rY_\gamma$ by
\begin{equation}\label{eq:open set V para-hyper}
    V \coloneqq \{\bu = (\bv,h,A) \in \rX_\gamma : h > \kappa \tand A \in (0,1+\delta)\}.
\end{equation}

We now state the local strong well-posedness result for \eqref{eq:landfast ice model para-hyper}.

\begin{thm}\label{thm:loc strong wp para-hyper}
Consider $p$, $q \in (1,\infty)$ such that $\frac{1}{p} + \frac{1}{q} < \frac{1}{2}$, and let $\bu_0 = (\bv_0,h_0,A_0) \in V$, where $V \subset \rY_\gamma$ has been defined in \eqref{eq:open set V para-hyper}.
Moreover, assume that the wind and ocean velocities $\bv_\ra$ and $\bv_\ro$ fulfill $\bv_\ra$, $\bv_\ro \in \rL^\infty(0,T;\rL^{2q}(G)^2)$ for all $T > 0$.
Then there exists $T' > 0$ such that \eqref{eq:landfast ice model para-hyper} has a unique solution $\bu = (\bv,h,A) \in \rW^{1,p}(0,T';\rY_0) \cap \rL^p(0,T';\rY_1)$.
\end{thm}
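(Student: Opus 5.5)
The plan is to mimic the parabolic-hyperbolic approach of \cite{Bra:25}. We decouple the hyperbolic transport of $(h,A)$ from the parabolic momentum equation and close the argument by a contraction-mapping argument in a suitable ball.

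\textbf{Step 1: transport equations for given velocity.} Fix $T'>0$ and a velocity $\bar\bv$ in the ball
\begin{equation*}
\rW^{1,p}(0,T';\rL^q(G)^2)\cap\rL^p(0,T';\rW^{2,q}(G)^2\cap\rW^{1,q}_0(G)^2)
\end{equation*}
around a reference extension of $\bv_0$. Since $\frac1p+\frac1q<\frac12$, we have $\rW^{2,q}\hookrightarrow\rC^{1,\alpha}$, so $\bar\bv\in\rL^p(0,T';\rC^1(\oG))$ and hence $\nabla\bar\bv\in\rL^1(0,T';\rL^\infty)$. Because $\bar\bv=0$ on $\del G$, the characteristics stay in $\oG$ and no boundary data for $h,A$ are needed. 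We solve $\del_t h+\div(\bar\bv h)=0$ and the analogous equation for $A$ by the method of characteristics, or equivalently via the Lagrangian flow. Standard $\rW^{1,q}$ energy estimates, obtained by differentiating, multiplying by $|\nabla h|^{q-2}\nabla h$ and integrating, give
\begin{equation*}
\|h\|_{\rL^\infty(0,T';\rW^{1,q})}\le \|h_0\|_{\rW^{1,q}}\exp\Bigl(C\int_0^{T'}\|\bar\bv\|_{\rW^{2,q}}\Bigr).
\end{equation*}
Here we use that $\nabla^2\bar\bv\,h$ is controlled by $\|\bar\bv\|_{\rW^{2,q}}\|h\|_{\rL^\infty}$, with $\rW^{1,q}\hookrightarrow\rL^\infty$ since $q>2$. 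The ODE along characteristics preserves $h>\kappa'$ and $A\in(0,1+\delta')$ for $T'$ small, since $\|\div\bar\bv\|_{\rL^1_t\rL^\infty_x}$ is small. We also obtain $h,A\in\rW^{1,p}(0,T';\rL^q)$ from the equation. Lipschitz dependence on $\bar\bv$ holds in the weaker norm $\rL^\infty(0,T';\rL^q)$, which loses one derivative.

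\textbf{Step 2: momentum equation for given $(h,A)$.} We treat the momentum equation as a quasilinear parabolic problem with coefficients depending on $(h,A)\in\rC([0,T'];\rW^{1,q})\subset\rC([0,T'];\rC^{\alpha})$. By Remark~\ref{rem:max reg fastice op}, which needs only $\rC^1\times\rC\times\rC$ coefficients, $-\AFID(\bv_0,h_0,A_0)+\omega$ has maximal $\rL^p$-regularity on $\rL^q(G)^2$. We freeze coefficients at $t=0$ and treat $\AFID(\bv,h,A)-\AFID(\bv_0,h_0,A_0)$ as a small perturbation for short times. Smallness comes from the continuity of $\bv\mapsto\dbeps(\bv)\in\rC([0,T'];\rC(\oG))$, which holds through the trace embedding $\rB^{2-2/p}_{qp}\hookrightarrow\rC^1$, and from the continuity in time of $h,A$ in $\rC(\oG)$.

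The pressure gradient $\frac{1}{2\rho h}\nabla P'(h,A)$ now lies only in $\rL^q$, because $h,A\in\rW^{1,q}$. It therefore moves to the right-hand side as a forcing term, which is admissible since $\rY_0$ contains $\rW^{1,q}$ for $h,A$. The lower-order term $\nabla\tP\,\bS\dbeps(\bv)$ is estimated in $\rL^q$ using $\dbeps\in\rL^\infty$. The terms $f_\rb$, $f_\ra$, $f_\ro$, the convection term and the Coriolis term are handled as in Lemma~\ref{lem:nonlin ests}, using $\bv_\ra,\bv_\ro\in\rL^\infty(0,T;\rL^{2q})$. Lemma~\ref{lem:loc wp quasilin ACP}, or a direct fixed point, then yields $\bv$ in the maximal-regularity class.

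\textbf{Step 3: closing the argument.} We define $\Phi(\bar\bv)=\bv$ and show that $\Phi$ maps the ball into itself for $T'$ small; the $\rL^p$-in-time norms of $\bar\bv$ vanish as $T'\to0$, which keeps all factors of the form $\exp(C\int\|\bar\bv\|)$ bounded. I expect the \textbf{main obstacle} to be the contraction estimate, because of the derivative loss in Step 1: differences of $(h,A)$ are controlled only in $\rL^q$, not $\rW^{1,q}$. Differences of $\nabla P'$ therefore cannot be estimated in $\rL^q$, and the differences of the coefficients of $\AFID$ likewise need $\rL^\infty$ control.

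The first thing I would try is to prove contraction in a weaker norm, namely $\bv$ in
\begin{equation*}
\rL^p(0,T';\rW^{1,q})\cap\rW^{1,p}(0,T';\rW^{-1,q})
\end{equation*}
and $(h,A)$ in $\rL^\infty(0,T';\rL^q)$. In that norm $\nabla(P'_1-P'_2)$ lies in $\rW^{-1,q}$, and Step 2 is run in the weak $\rW^{-1,q}$ setting. Boundedness in the strong norm, together with a closed ball that is complete under the weak metric, then gives a fixed point. If this fails, the fallback is to pass to Lagrangian coordinates as in \cite{Bra:25}. There $h,A$ are explicit functions of $h_0,A_0$ and $\det\nabla X$, so the derivative loss disappears and the whole system becomes a quasilinear parabolic equation for $\bv$ alone, to which Lemma~\ref{lem:loc wp quasilin ACP} applies directly. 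Uniqueness follows from the same contraction estimate.
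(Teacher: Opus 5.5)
Your main route is genuinely different from the paper's. The paper, sketching \cite{Bra:25}, never estimates the Eulerian transport equations. It passes to Lagrangian coordinates, where the balance laws become $\del_t \th = -\th\sum_{j,k}(\del_j Y_k)\del_k \tbv_j$ (and likewise for $\tA$). These are ODEs in $\rW^{1,q}(G)$ with no spatial derivative of $\th$ or $\tA$, so no derivative is lost:
\begin{itemize}
\item the linearized coupled problem has maximal $\rL^p$-regularity on the anisotropic space $\rY_0$, because the pressure gradient is a bounded perturbation on $\rY_0$ and the $\th$-, $\tA$-rows are time integrations of $\rW^{1,q}$-valued right-hand sides;
\item the fixed point therefore closes directly in $\rW^{1,p}(0,T';\rY_0)\cap\rL^p(0,T';\rY_1)$, using only the $\rL^q$-theory for $\AFID$ from \autoref{prop:Hinfty fast ice op};
\item the price is a transformed fastice operator and nonlinearities with $\nabla Y$-dependent coefficients, which must be controlled for short times, plus the transformation back.
\end{itemize}
Your Eulerian decoupling keeps $\AFID$ in its original form and handles transport by classical $\rW^{1,q}$-estimates; Steps~1 and~2 are the standard ingredients. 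As you correctly diagnose, though, it loses one derivative. You therefore need a two-norm argument: bounds in the strong ball, contraction in a weak metric, and completeness of the strong ball in that metric by reflexivity. That is sound in principle, but it rests on a second linear theory that neither you nor the paper supplies. You would need maximal regularity in $\rW^{-1,q}(G)^2$ for a non-autonomous operator of the non-divergence form $\frac{1}{\rho h}\div(\cdots)$, whose coefficients depend on $\bv_1,\bv_2,h_1,h_2$. A cheaper way to close this step is an $\rL^2$-energy estimate for the differences:
\begin{itemize}
\item multiply the momentum equations by $\rho h_i$;
\item write $\tS(\dbeps(\bv_1),\tP_1)-\tS(\dbeps(\bv_2),\tP_1)$ as an averaged derivative of $\tS$ applied to $\dbeps(\bv_1-\bv_2)$; on the strong ball $\dbeps(\bv_i)$ is bounded, so this inherits the coercivity \eqref{eq:aux est ell};
\item use Korn's inequality;
\item estimate $h_1-h_2$ and $A_1-A_2$ in $\rL^\infty(0,T';\rL^2(G))$ from the transport equations.
\end{itemize}
All coupling terms then carry a positive power of $T'$.

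Two further remarks. First, your Step~1 gives $h$, $A\in\rC([0,T'];\rW^{1,q}(G))\cap\rW^{1,p}(0,T';\rL^q(G))$, not $\rW^{1,p}(0,T';\rW^{1,q}(G))$ as in the statement. In Eulerian variables this cannot be improved: $\del_t h = -\bv\cdot\nabla h - h\div\bv$, and for $h\in\rW^{1,q}$ the term $\bv\cdot\nabla h$ is in general only in $\rL^q$. The class $\rW^{1,p}(0,T';\rW^{1,q})$ is that of the Lagrangian unknowns $(\th,\tA)$ in \autoref{prop:local strong wp in the Lagrangian setting}. Transforming back, as the paper does, yields the same Eulerian regularity as your scheme, so you should state explicitly which class you prove.

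Second, your fallback overstates what \autoref{lem:loc wp quasilin ACP} gives. With $\th = h_0/\det\nabla X$ and $X(t,\by) = \by + \int_0^t\tbv(s,\by)\srd s$, the coefficients depend on the whole history of $\tbv$, not only on $\tbv(t)$. The reduced equation is therefore not of the autonomous form $u'+A(u)u=F(u)$. This is why the paper keeps $(\th,\tA)$ as unknowns in $\rY_0$ and argues via maximal regularity of the linearized coupled problem plus a fixed-point argument, as in \cite[Thm.~4.4, Cor.~4.5, Sec.~5.2]{Bra:25}.
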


Instead of providing the complete proof of \autoref{thm:loc strong wp para-hyper}, we sketch the main steps, which are similar to those in \cite{Bra:25} upon invoking the maximal regularity of $\AFID(\bu_0)$ as established in \autoref{prop:ell of fast ice op}.

The first step is to transform the problem to Lagrangian coordinates to circumvent the hyperbolic effects in the balance laws.
Thus, we invoke the characteristics $X$ solving
\begin{equation*}
    \del_t X(t,\by) = \bv(t,X(t,\by)), \tfor t > 0, \enspace X(0,\by) = \by, \tfor \by \in \bR^2.
\end{equation*}
For $t \in (0,T)$, we denote by $Y(t,\cdot) = [X(t,\cdot)]^{-1}$ the inverse of $X(t,\cdot)$.
We then introduce the variables in Lagrangian coordinates $\tbv(t,\by) \coloneqq \bv(t,X(t,\by))$, $\th(t,\by) \coloneqq h(t,X(t,\by))$, and $\tA(t,\by) \coloneqq A(t,X(t,\by))$.
For $\tbu = (\tbv,\th,\tA)$, and with suitable terms $\tbAFI(\tbu)$ and $\tB(\tbu) \binom{\th}{\tA} \coloneqq \tB_\rh(\tbu) \th + \tB_\rA(\tbu) \tA$, the transformed problem in Lagrangian coordinates is of the form
\begin{equation}\label{eq:landfast ice Lagrangian coords}
    \left\{
    \begin{aligned}
        \del_t \tbv
        &= \tbAFI(\tbu) \tbv - \tB(\tbu) \binom{\th}{\tA} - C_{\cor} \bk \times (\tbv - \bv_\ro)\\
        &\quad + \frac{1}{\rho \th}\bigl(f_\ra + f_\ro(\tbv) + f_\rb(\tbv,\th,\tA)\bigr), &&\tin G \times (0,T),\\
        \del_t \th
        &= -\th \sum_{j,k=1}^2 (\del_j Y_k) \del_k \tbv_j, &&\tin G \times (0,T),\\
        \del_t \tA
        &= -\tA \sum_{j,k=1}^2 (\del_j Y_k) \del_k \tbv_j, &&\tin G \times (0,T),\\
        \tbv
        &= 0, &&\ton \del G \times (0,T),\\
        \tbv(0) 
        &= \bv_0, \enspace \th(0) = h_0, \enspace \tA(0) = A_0, &&\tin G,
    \end{aligned}
    \right.
\end{equation}
where we observe that the hyperbolic terms in the balance laws are annihilated.
The well-posedness result can then be reformulated for the system in Lagrangian coordinates.
This result reads as follows.

\begin{prop}\label{prop:local strong wp in the Lagrangian setting}
Suppose that $p$, $q \in (1,\infty)$ satisfy $\frac{1}{p} + \frac{1}{q} < \frac{1}{2}$, and consider $\bu_0 \in V$, where $V \subset \rX_\gamma$ has been defined in \eqref{eq:open set V para-hyper}.
Besides, assume that $\bv_\ra$ and $\bv_\ro$ are as made precise in \autoref{thm:loc strong wp para-hyper}.
Then there is $T' > 0$ so that \eqref{eq:landfast ice Lagrangian coords} has a unique solution $\tbu = (\tbv,\th,\tA) \in \rW^{1,p}(0,T;\rY_0) \cap \rL^p(0,T;\rY_1) \cap \rC([0,T];V)$.
\end{prop}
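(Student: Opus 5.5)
The plan is to follow the strategy of \cite{Bra:25} and view \eqref{eq:landfast ice Lagrangian coords} as a fixed-point problem around a linearized system frozen at the initial datum.

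\textbf{Linear problem.} For given $(f,g_\rh,g_\rA)$, the linear problem reads
\begin{equation*}
\del_t \tbv - \bAFI(\bu_0)\tbv + \tB(\bu_0)\binom{\th}{\tA} = f, \qquad \del_t \th = -h_0 \div \tbv + g_\rh, \qquad \del_t \tA = -A_0 \div \tbv + g_\rA,
\end{equation*}
with Dirichlet data for $\tbv$ and initial values $\bu_0$. At $t=0$ we have $X=\mathrm{id}$, so $\nabla Y = I$, and $\tbAFI(\bu_0) = \bAFI(\bu_0)$. The choice $\frac{1}{p}+\frac{1}{q}<\frac12$ gives $\rY_\gamma \hookrightarrow \rC^1(\oG)^2\times\rC(\oG)\times\rC(\oG)$, so \autoref{rem:max reg fastice op} applies. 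Hence $-\AFID(\bu_0)+\omega$ has maximal $\rL^p$-regularity on $\rL^q(G)^2$.

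The operator matrix is upper block-triangular after a reordering. The $(h,A)$-rows only need $\div\tbv \in \rL^p(0,T;\rW^{1,q})$, which holds for $\tbv\in\rL^p(\rW^{2,q})$. The coupling $\tB(\bu_0)\binom{\th}{\tA}$ is a first-order operator in $(\th,\tA)\in\rW^{1,q}$ and therefore a bounded map into $\rL^q$. I will solve the system by a Neumann series or Volterra argument on a short interval: the $(\th,\tA)$-part is an integral in time, so its contribution to the $\tbv$-equation has norm $O(T^{1-1/p})$. This gives maximal regularity of the full linear operator on $\rY_0$ with constants uniform for $T\le T_0$.

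\textbf{Nonlinear estimates.} I will rewrite \eqref{eq:landfast ice Lagrangian coords} as the linear problem plus right-hand sides built from four groups of terms:
\begin{enumerate}[(i)]
\item $[\tbAFI(\tbu)-\bAFI(\bu_0)]\tbv$ and $[\tB(\tbu)-\tB(\bu_0)]\binom{\th}{\tA}$;
\item $(h_0-\th)\div\tbv - \th\sum_{j,k}(\del_jY_k-\delta_{jk})\del_k\tbv_j$ and the analogous $A$-term;
\item the Coriolis and drag terms;
\item the basal stress $\frac{1}{\rho\th}f_\rb$.
\end{enumerate}
On the ball of radius $R$ in $\bE_1(0,T)$ with $\tbu(0)=\bu_0$, these are estimated as follows.
\begin{itemize}
\item $\nabla X - I = \int_0^t\nabla\tbv$ is small in $\rC([0,T];\rW^{1,q})$ by Hölder in time, which gives a factor $T^{1-1/p}$. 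Since $\rW^{1,q}$ is an algebra for $q>2$, this yields smallness of $\nabla Y - I$.
\item $\th-h_0$ and $\tA-A_0$ are small in $\rC([0,T];\rW^{1,q})$, again by the time-integral structure.
\item $\tbv-\bv_0$ is small in $\rC^1$, via $\rY_\gamma\hookrightarrow\rC^1$ and continuity of $\tbv$ in the trace space.
\end{itemize}
Combined with smooth dependence of the coefficients $a_{ij}^{kl}$ on $(\dbeps,\tP)$ for $h\ge\kappa$, this produces bounds of the form $\|\cdot\|\le C(R)(T^{\beta}+\eta(T))\|\tbu\|$, where $\eta(T)\to0$. The basal stress and drag terms are Lipschitz in $\rL^q$, exactly as in \autoref{lem:nonlin ests}. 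The assumption $\bv_\ra,\bv_\ro\in\rL^\infty(\rL^{2q})$ enters through $|\bv_\ra|\bv_\ra\in\rL^\infty(\rL^q)$, which gives $\rL^p(0,T;\rL^q)$ norms of order $T^{1/p}$.

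\textbf{Contraction and main obstacle.} With these bounds, the solution map of the linear problem is a contraction on a closed ball for $T'$ small, and the fixed point is the unique solution of \eqref{eq:landfast ice Lagrangian coords}. Lower positivity $\th>\kappa$ and the range of $\tA$ persist for small times because of continuity into $\rW^{1,q}\hookrightarrow\rC$, so the solution stays in $V$.

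The main obstacle is the term $\th\sum(\del_jY_k)\del_k\tbv_j$ in the $\rW^{1,q}$-norm. It involves $\nabla^2\tbv$, which lies only in $\rL^p(\rL^q)$, multiplied by $\th$, and $\nabla\th\cdot\nabla\tbv$, which requires $\nabla\tbv\in\rL^p(\rL^\infty)$. For $\th$ and $\nabla Y$ I will use the $\rW^{1,q}$-algebra property and $\rC$-in-time control, accepting $T$-dependent constants that must stay uniform as $T\to0$. This uniformity is the delicate point, and I will handle it by extending data to $(0,T_0)$ with fixed reference functions as in \cite{Bra:25}.
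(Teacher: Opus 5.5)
Your proposal is correct and follows essentially the same route as the paper's sketch. Following \cite{Bra:25}, the paper derives maximal $\rL^p$-regularity of the linearization at $\bu_0$ on the anisotropic space $\rY_0$ from that of $\AFID(\bu_0)$, and then closes a contraction with nonlinear estimates that use the short-time smallness of $\nabla X - I$, $\th - h_0$, $\tA - A_0$ and $\tbv - \bv_0$. One correction: the linear system is not block-triangular after any reordering, since it couples both ways (through $\tB(\bu_0)$ and through $h_0 \div$, $A_0 \div$), but the Volterra/Neumann-series argument you describe handles exactly this coupling, because the feedback of $(\th,\tA)$ into the $\tbv$-equation is small for short times.
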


The assertion of \autoref{thm:loc strong wp para-hyper} can be deduced from that of \autoref{prop:local strong wp in the Lagrangian setting}.
The idea to prove \autoref{prop:local strong wp in the Lagrangian setting} is in turn to investigate the associated linearized problem.
Upon invoking the maximal $\rL^p$-regularity of the fastice operator $\AFID(\bu_0)$ from \autoref{prop:Hinfty fast ice op}, in a similar manner as in \cite[Thm.~4.4 and Cor.~4.5]{Bra:25}, one can derive the maximal $\rL^p$-regularity of the linearized problem on the present anisotropic ground space $\rY_0$.
Together with nonlinear estimates in the wake of \cite[Sec.~5.2]{Bra:25}, this paves the way for a fixed-point argument to establish the existence of a unique strong solution $\tbu = (\tbv,\th,\tA)$ to \eqref{eq:landfast ice Lagrangian coords}, thereby showing \autoref{thm:loc strong wp para-hyper} in a second step.

\section{Global well-posedness close to equilibria}\label{sec:glob wp close to equil}

In this section, we investigate the global strong well-posedness of the landfast ice model close to equilibria, and in the absence of external forcing terms.
The corresponding result can be found in \autoref{thm:glob ex close to equil}.
The idea is to use the generalized principle of linearized stability as recalled in \autoref{lem:gen princ of lin stab}.
With regard to the rheology, we distinguish the cases $k_{\rt} \in [0,1)$ and $k_{\rt} \equiv 1$.
In fact, in the former case, in contrast to \cite[Thm.~2.3]{BDHH:22}, we manage to show the result without assumption on the regularization parameter $\Delta_{\min} > 0$, while in the latter case, such a smallness condition seems indispensable.

More precisely, we focus on the situation when $f_{\ra} = f_{\ro} = f_{\rc} = f_{\rsh} = f_{\rb} = 0$.
The resulting simplified landfast ice problem is given by
\begin{equation}\label{eq:simplified landfast ice model}
    \left\{
    \begin{aligned}
        \del_t \bv + (\bv \cdot \nabla) \bv
        &= \frac{1}{\rho h} \div(\bsigma), &&\tin G \times (0,T),\\
        \del_t h + \div(\bv h)
        &= d_\rh \Delta h, &&\tin G \times (0,T),\\
        \del_t A + \div(\bv A)
        &= d_\rA \Delta A, &&\tin G \times (0,T),\\
        \bv = 0, \enspace \del_\nu h&= \del_\nu A = 0, &&\ton \del G \times (0,T),\\
        \bv(0) = \bv_0, \enspace h(0)
        &= h_0, \enspace A(0) = A_0, &&\tin G.
    \end{aligned}
    \right.
\end{equation}
We focus here on the unweighted case, i.e., for $p$, $q \in (1,\infty)$ such that 
\begin{equation}\label{eq:cond p and q glob small data}
    \frac{1}{p} + \frac{1}{q} < \frac{1}{2},
\end{equation}
we consider the trace space
\begin{equation}\label{eq:trace space glob wp close to equil}
    \rX_\gamma \coloneqq \rX_{\gamma,1} = \rB_{qp,\rD}^{2 - \frac{2}{p}}(G)^2 \times \rB_{qp,\rN}^{2 - \frac{2}{p}}(G) \times \rB_{qp,\rN}^{2 - \frac{2}{p}}(G),
\end{equation}
where we recall that the subscripts $_\rD$ and $_\rN$ indicate Dirichlet and Neumann boundary conditions, respectively.
Analogously, we invoke the open set $V \coloneqq V_1$ as introduced in \eqref{eq:open set V_mu}.

The simplified right-hand side $F_s(\bu)$ capturing the lower-order nonlinearities is given by
\begin{equation*}
    F_s(\bu) = \begin{pmatrix}
        -(\bv \cdot \nabla)\bv\\
        -\div(\bv h)\\
        -\div(\bv A)
    \end{pmatrix}.
\end{equation*}
Recalling the operator matrix $\ALFI$ from \eqref{eq:op matrix landfast ice}, for the principal variable $\bu = (\bv,h,A)$, we reformulate the simplified landfast ice model as a quasilinear evolution equation of the form
\begin{equation}\label{eq:ACP simplified fastice}
    \left\{
    \begin{aligned}
        \bu'(t) + \ALFI(\bu(t))\bu(t)
        &= F_s(\bu(t)), \tfor t \in (0,T),\\
        \bu(0)
        &= \bu_0.
    \end{aligned}
    \right.
\end{equation}
By $\cE$, we denote the set of equilibrium solutions to \eqref{eq:ACP simplified fastice}, or, equivalently, to \eqref{eq:simplified landfast ice model}, i.e., 
\begin{equation*}
    \cE \coloneqq \left\{\bu \in V \cap \rX_1 : \ALFI(\bu) \bu = F_s(\bu)\right\}.
\end{equation*}
We observe that $\bu_* = (0,h_*,A_*)$ for $h_* > \kappa$ and $A_* \in (0,1]$ constant is clearly an equilibrium solution, so it holds that $\bu_* \in \cE$.
The theorem below is the second main result of this paper.
It asserts the stability of equilibria of the form of $\bu_*$.
In contrast to \cite[Thm.~2.3]{BDHH:22}, we do not require a smallness assumption on the regularization parameter $\Delta_{\min}$ in general, but we only need it if $k_{\rt} \equiv 1$.

\begin{thm}\label{thm:glob ex close to equil}
Consider $p$, $q \in (1,\infty)$ such that \eqref{eq:cond p and q glob small data}, let $\bu_* = (0,h_*,A_*)$ with $h_* > \kappa$ and $A_* \in (0,1]$ constant, and recall the trace space $\rX_\gamma$ from \eqref{eq:trace space glob wp close to equil}.
\begin{enumerate}[(i)]
    \item If $k_{\rt} \in [0,1)$, the equilibrium $\bu_*$ is stable in $\rX_\gamma$, and there exists $r > 0$ such that the unique solution $\bu$ to \eqref{eq:ACP simplified fastice}, or, equivalently, to \eqref{eq:simplified landfast ice model}, for initial data $\bu_0 \in \rX_\gamma$ with $\| \bu_0 - \bu_* \|_{\rX_\gamma} < r$ exists for all times $t \in \bR_+$ and converges to some $\bu_\infty \in \cE$ in $\rX_\gamma$ at an exponential rate as $t \to \infty$.
    \item If $k_{\rt} \equiv 1$, there exists $\Delta_* > 0$ such that if $\Delta_{\min} < \Delta_*$, then the equilibrium $\bu_*$ is stable in $\rX_\gamma$, so the assertion of~(i) on the global-in-time existence and exponential convergence remains valid.
\end{enumerate}
\end{thm}

The proof of \autoref{thm:glob ex close to equil} will be based on the generalized principle of linearized stability as recalled in \autoref{lem:gen princ of lin stab}.
In the remainder of the section, we show that we are in the scope of \autoref{lem:gen princ of lin stab}.
In \autoref{ssec:dyn evol towards equil}, we will elaborate on how the analytical result in \autoref{thm:glob ex close to equil} is reflected in the numerical simulations, see in particular \autoref{fig:ex3}.

With regard to \autoref{lem:gen princ of lin stab}, we invoke the total linearization
\begin{equation*}
    \ALFI_0 \bu = \ALFI(\bu_*) \bu + ((\ALFI)'(\bu)) \bu_* - F_s'(\bu_*) \bu.
\end{equation*}
First, since the velocity component of $\bu_*$ especially equals zero, we find that
\begin{equation*}
    F_s'(\bu_*) \bu = \begin{pmatrix}
        0\\ -h_* \div \bv\\ -A_* \div \bv
    \end{pmatrix}.
\end{equation*}
We further introduce
\begin{equation}\label{eq:shorthand not P}
    \begin{aligned}
        \tP_* 
        &\coloneqq (1+k_{\rt}) h_* P^* \exp(-c^*(1-A_*)),\\
        P_*' 
        &\coloneqq (1-k_{\rt}) h_* P^* \exp(-c^*(1-A_*)),\\
        P_{h,*}'
        &\coloneqq \frac{\del_h P'(h_*,A_*)}{2 \rho h_*} = \frac{(1-k_{\rt})P^* \exp(-c^*(1-A_*))}{2 \rho h_*}, \tand\\
        P_{A,*}'
        &\coloneqq \frac{\del_A P'(h_*,A_*)}{2 \rho h_*} = \frac{-C(1-k_{\rt})P^* \exp(-c^*(1-A_*))}{2 \rho}.
    \end{aligned}
\end{equation}
Note that if $k_{\rt} \equiv 1$, then $P_*' = P_{h,*}' = P_{A,*}' = 0$.
We find that
\begin{equation*}
    \ALFI(\bu_*) \bu = \begin{pmatrix}
        -\AFID(\bu_*) \bu + P_{h,*}' \nabla h + P_{A,*}' \nabla A\\
        -d_\rh \Delta_\rN\\
        -d_\rA \Delta_\rN
    \end{pmatrix},
\end{equation*}
while the shape of $\bu_* = (0,h_*,A_*)$ implies $((\ALFI)'(\bu)) \bu_* = 0$.
Thus, we obtain the total linearization
\begin{equation}\label{eq:tot lin}
    \ALFI_0 \bu = \begin{pmatrix}
        -\AFID(\bu_*) \bu + P_{h,*}' \nabla h + P_{A,*}' \nabla A\\
        -d_\rh \Delta_\rN + h_* \div \bv\\
        -d_\rA \Delta_\rN + A_* \div \bv
    \end{pmatrix}.
\end{equation}

In the following, we discuss properties of the linearized fastice operator $\AFID(\bu_*)$, and of the total linearization $\ALFI_0$.
For the second part, we distinguish the cases $k_{\rt} \in [0,1)$ and $k_{\rt} \equiv 1$, since the latter case requires a smallness assumption on the regularization parameter $\Delta_{\min}$.

\begin{lem}\label{lem:spectral props of tot lin}
Let $\bu_* = (0,h_*,A_*)$, where $h_* > \kappa$ and $A_* \in (0,1]$ are constant.
\begin{enumerate}[(a)]
    \item For $\AFID(\bu_*)$ as introduced in \eqref{eq:lin fastice op}, it holds that $0 \in \rho(\AFID(\bu_*))$, $s(\AFID(\bu_*)) < 0$, and $-\AFID(\bu_*)$ has maximal $\rL^p$-regularity.
    \item The total linearization $\ALFI_0$ from \eqref{eq:tot lin} has a compact resolvent in $\rX_0$, the spectrum of $\ALFI_0$ is $q$-independent and consists of eigenvalues.
    \begin{enumerate}[(i)]
        \item For $k_{\rt} \in [0,1)$, we have $\sigma(\ALFI_0) \setminus \{0\} \subset \bC_+$ and $\rN(\ALFI_0) = \{0\} \times \bR \times \bR$. 
        \item If $k_{\rt} \equiv 1$, there is $\Delta_* > 0$ so that if $\Delta_{\min} < \Delta_*$, we get $ \sigma(\ALFI_0) \setminus \{0\} \subset \bC_+$ as well as $\rN(\ALFI_0) = \{0\} \times \bR \times \bR$.
    \end{enumerate}
\end{enumerate}
\end{lem}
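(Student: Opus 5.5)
Part (a) rests on an energy identity. At $\bu_* = (0,h_*,A_*)$ all coefficients are constant: $\dbeps(0) = 0$, so $\Delta(\dbeps) = \Delta_{\min}$ and $\tP_*$ is constant. Hence the lower-order term in \eqref{eq:lin fastice op} vanishes and
\begin{equation*}
    \AFID(\bu_*)\bv = \frac{\tP_*}{2\rho h_* \Delta_{\min}} \div(\bS \dbeps(\bv)).
\end{equation*}
For $\bv \in \rD(\AFID(\bu_*))$ with $q = 2$, integration by parts with $\bv = 0$ on $\dG$ and \eqref{eq:tri^2} give
\begin{equation*}
    -(\AFID(\bu_*)\bv|\bv)_{\rL^2} = c_*\int_G \tri^2(\dbeps(\bv)) \srd x \ge \frac{c_*}{e^2} \| \dbeps(\bv) \|_{\rL^2}^2 \ge c \| \bv \|_{\rH^1}^2,
\end{equation*}
where $c_* > 0$ and the last step uses Korn's inequality on $\rH_0^1$ together with Poincaré. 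So $-\AFID(\bu_*)$ is symmetric, strictly positive and has compact resolvent on $\rL^2$, hence a spectral bound strictly away from zero. The domains $\rW^{2,q}\cap\rW^{1,q}_0$ embed compactly, so the resolvent is compact on every $\rL^q$. Elliptic regularity shows eigenfunctions lie in $\rW^{2,r}$ for all $r$, which makes the spectrum $q$-independent. This yields $0 \in \rho$ and $s(\AFID(\bu_*)) < 0$; maximal regularity follows from \autoref{prop:Hinfty fast ice op}, since the $\cH^\infty$ shift can be removed once the spectral bound is negative.

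For (b), compact resolvent follows from $\rX_1 \hookrightarrow \rX_0$ compactly, and $q$-independence follows as above by bootstrapping eigenfunctions. Let $\ALFI_0 \bu = \lambda \bu$. Write $a = h_*$, $b = A_*$, $\beta = P'_{h,*}$, $\gamma = P'_{A,*}$ and $\mathcal A = -\AFID(\bu_*)$. Test the first equation with $\overline{\bv}$, and the $h$- and $A$-equations with suitable weights times $\overline{h}$ and $\overline{A}$. For $k_{\rt}<1$, the $A$-equation enters through $\gamma\nabla A$, so I would first pass to the new variables $w = \beta h + \gamma A$ and a complementary one. Testing with $\overline w/(\beta a + \gamma b)$ would cancel the coupling $\int \nabla w\cdot\overline\bv$ against $\int \div\bv\,\overline w$, provided $\beta a + \gamma b > 0$. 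Since $P'_{A,*} < 0$ in \eqref{eq:shorthand not P}, this sign needs to be checked (it reads $1 - C h_*$); if it fails, I would instead use the weighted energy $\frac{\beta}{a}|h|^2 + \ldots$ built from the Hessian of a convex free energy. If the cancellation works, the real part gives
\begin{equation*}
    \re\lambda\,(\text{positive form}) = (\mathcal A\bv|\bv) + d\,\|\nabla w\|^2 \ge 0,
\end{equation*}
with equality only if $\bv = 0$ and $\nabla h = \nabla A = 0$. Then $\re\lambda > 0$ unless $\lambda = 0$, and the kernel consists of constants $(0,h,A)$. One also has to exclude $\lambda \in i\bR\setminus\{0\}$: equality forces $\bv = 0$, so $\lambda h = -d_\rh\Delta_\rN h$, which is self-adjoint with nonnegative spectrum, hence $\lambda = 0$.

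For $k_{\rt} = 1$ the pressure coupling vanishes, so $\ALFI_0$ is block upper-triangular. The $\bv$-block $\mathcal A$ stands alone, and the $(h,A)$-block is $-d\Delta_\rN$ with source $h_*\div\bv$. Its spectrum is therefore $\sigma(\mathcal A)\cup\sigma(-d_\rh\Delta_\rN)\cup\sigma(-d_\rA\Delta_\rN) \subset (0,\infty)$. To get a trivial Jordan structure at $0$, I need $(\ALFI_0)^2\bu = 0 \Rightarrow \ALFI_0\bu = 0$, i.e.\ semisimplicity as required in \autoref{lem:gen princ of lin stab}. The equation $-d\Delta_\rN h = c$ forces $c = 0$ after averaging, and $\int\div\bv = 0$ handles the source term, so semisimplicity holds. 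This suggests the smallness of $\Delta_{\min}$ enters only if the paper's linearization keeps a residual term I have overlooked; I would check that first. The main obstacle is the sign and cancellation in the energy estimate for $k_{\rt} < 1$. I also expect to need the semisimplicity of $0$ for the principle of linearized stability.
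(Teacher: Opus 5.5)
Your part (a) is the paper's argument: constant coefficients at $\bu_*$, integration by parts, Korn and Poincar\'e, compact resolvent, and removal of the shift. Your (b)(ii) is correct and in fact sharper than the paper's.

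For $k_{\rt}\equiv 1$ there is no residual term. The operator \eqref{eq:tot lin} is block \emph{lower}-triangular in the ordering $(\bv,h,A)$. An eigenvector with $\bv\neq 0$ forces $\lambda\in\sigma(-\AFID(\bu_*))\subset(0,\infty)$. An eigenvector with $\bv=0$ forces $\lambda\in\sigma(-d_\rh\Delta_\rN)\cup\sigma(-d_\rA\Delta_\rN)\subset[0,\infty)$. Together with $0\in\rho(\AFID(\bu_*))$, this also gives the kernel, and all of it holds for every $\Delta_{\min}>0$. The paper instead tests with $(\bv,h,A)$ and absorbs $h_*\int_G\nabla h\cdot\bv\srd\bx$ by Young's inequality into the coercivity constant of the fastice operator, which scales like $\Delta_{\min}^{-1}$. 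That is the only source of its condition $\Delta_{\min}<\Delta_*$.

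The gap is in (b)(i): the route you actually lay out does not close in general.

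First, your sign worry comes from a misprint in \eqref{eq:shorthand not P}. Since $\del_A P'(h,A)=c^*P'(h,A)$, one has $P_{A,*}'=\frac{c^*(1-k_{\rt})P^*\exp(-c^*(1-A_*))}{2\rho}>0$, and the paper's proof uses $c_2>0$.

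Second, and more seriously, the substitution $w=\beta h+\gamma A$ only decouples if $d_\rh=d_\rA$, which the analysis does not assume. The $w$-equation is $\lambda w=-\beta d_\rh\Delta h-\gamma d_\rA\Delta A+(\beta a+\gamma b)\div\bv$. Testing it with $\overline{w}$ yields, up to the factor $(\beta a+\gamma b)^{-1}$, the real part
$\beta^2 d_\rh\|\nabla h\|_{\rL^2}^2+\gamma^2 d_\rA\|\nabla A\|_{\rL^2}^2+\beta\gamma(d_\rh+d_\rA)\re\int_G\nabla h\cdot\nabla\overline{A}\srd\bx$.
Its coefficient matrix has determinant $-\frac14\beta^2\gamma^2(d_\rh-d_\rA)^2<0$. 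Concretely, $\gamma A=-s\beta h$ with $s$ strictly between $1$ and $d_\rh/d_\rA$ makes it negative, so no sign of $\re\lambda$ follows. Even when $d_\rh=d_\rA$ you only obtain $\nabla w=0$, not $\nabla h=\nabla A=0$, although your $\bv=0$ argument repairs that special case.

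The missing step is your own fallback, carried out with diagonal weights. It works for all $d_\rh,d_\rA>0$. Test the three equations of $\ALFI_0\bu=\lambda\bu$ with $\overline{\bv}$, $c_1\overline{h}$, $c_2\overline{A}$, where $c_1=P_{h,*}'/h_*>0$ and $c_2=P_{A,*}'/A_*>0$. Since $\bv=0$ on $\dG$,
\begin{equation*}
    P_{h,*}'\int_G \nabla h\cdot\overline{\bv}\srd\bx + c_1 h_*\int_G (\div\bv)\,\overline{h}\srd\bx = P_{h,*}'\Bigl(\int_G \nabla h\cdot\overline{\bv}\srd\bx - \overline{\int_G \nabla h\cdot\overline{\bv}\srd\bx}\Bigr) = 2\mri P_{h,*}'\im\int_G \nabla h\cdot\overline{\bv}\srd\bx.
\end{equation*}
The $A$-pair is handled the same way, so both coupling pairs drop out of the real part and
\begin{equation*}
    \re\lambda\,\bigl(\|\bv\|_{\rL^2}^2 + c_1\|h\|_{\rL^2}^2 + c_2\|A\|_{\rL^2}^2\bigr) = -\int_G \AFID(\bu_*)\bv\cdot\overline{\bv}\srd\bx + c_1 d_\rh\|\nabla h\|_{\rL^2}^2 + c_2 d_\rA\|\nabla A\|_{\rL^2}^2 \ge C\bigl(\|\bv\|_{\rH^1}^2 + \|\nabla h\|_{\rL^2}^2 + \|\nabla A\|_{\rL^2}^2\bigr).
\end{equation*}
Hence $\re\lambda\ge 0$. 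If $\re\lambda=0$, then $\bv=0$ and $h$, $A$ are constant. The balance equations then give $\lambda h=\lambda A=0$, so $\lambda=0$, and the kernel is $\{0\}\times\bR\times\bR$. This is exactly the paper's proof of (i).
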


\begin{proof}
First, we observe that $\bu_* \in \rC^{1,\alpha}(\oG)^4$, so \autoref{prop:Hinfty fast ice op} yields that there exists $\omega_1 \in \bR$ such that for all $\omega > \omega_1$, the shifted operator $-\AFID(\bu_*) + \omega$ has maximal $\rL^p$-regularity on $\rL^q(G)^2$.
From the Rellich-Kondrachov theorem, it follows that the embedding $\rD(\AFID(\bu_*)) = \rW^{2,q}(G)^2 \cap \rW_0^{1,q}(G)^2 \hookrightarrow \rL^q(G)^2$ is compact, so the operator $\AFID(\bu_*)$ has compact resolvent, yielding in turn that the spectrum is $q$-independent and consists of eigenvalues.
Thus, in order to locate the spectrum, it suffices to test the eigenvalue problem
\begin{equation*}
    \lambda \bv - \AFID(\bu_*) \bv.
\end{equation*}
With regard to the form of the coefficients as revealed in \eqref{eq:coeffs principal part fastice op}, for the above $\tP_*$, we find that
\begin{equation*}
    - \AFID(\bu_*) \bv = -\frac{\tP_*}{\Delta_{\min}} \sum_{j,k,l=1}^2 \bS_{ij}^{kl} \del_k \del_l \bv_j.
\end{equation*}
Integrating by parts, using that $\bv = 0$ on $\del G$, invoking $\tri^2$ from \eqref{eq:tri^2}, exploiting $\tri^2(\nabla \bv) \ge \frac{1}{e^2} |\eps(\bv)|^2$, and employing Korn's inequality as well as Poincar\'e's inequality, we get
\begin{equation}\label{eq:est of fastice op}
    -\int_G \AFID(\bu_*) \bv \cdot \bv \srd \bx = \frac{\tP_*}{\Delta_{\min}} \sum_{i,j,k,l=1}^2 \int_G \bS_{ij}^{kl} \del_k \bv_i \del_l \bv_j \srd \bx = \frac{\tP_*}{\Delta_{\min}} \int_G \tri^2(\nabla \bv) \srd \bx \ge C \cdot \| \bv \|_{\rH^1(G)}^2
\end{equation}
for some constant $C = C(\Delta_{\min}) > 0$.
Thus, testing the above eigenvalue problem, we find that
\begin{equation*}
    0 = \lambda \cdot \| \bv \|_{\rL^2(G)}^2 - \int_G \AFID(\bu_*) \bv \cdot \bv \srd \bx \ge \lambda \cdot \| \bv \|_{\rL^2(G)}^2 + C(\Delta_{\min}) \cdot \| \bv \|_{\rH^1(G)}^2,
\end{equation*}
implying that $\lambda \in \bR$, $\lambda \le 0$, and $\bv = 0$ if $\lambda = 0$, so $0 \in \rho(\AFID(\bu_*))$ and $s(\AFID(\bu_*)) < 0$.
In particular, this yields that $\AFID(\bu_*)$ has maximal $\rL^p$-regularity on $\rL^q(G)^2$ without shift, completing the proof of~(a).

For~(b), arguing in a similar way as in~(a), we find that $\ALFI_0$ has a compact resolvent, so its spectrum is $q$-independent and consists of eigenvalues.
Again, for locating the spectrum, it is sufficient to test the eigenvalue problem
\begin{equation}\label{eq:eigenvalue probl}
    \lambda \bu + \ALFI_0 \bu = 0.
\end{equation}
Since $k_{\rt} \in [0,1)$ and $k_{\rt} \equiv 1$ require a different treatment, we distinguish the two cases.
Concerning~(i), for $P_{h,*}'$ and $P_{A,*}'$ as specified in \eqref{eq:shorthand not P}, we set $c_1 \coloneqq \frac{P_{h,*}'}{h_*} > 0$, $c_2 \coloneqq \frac{P_{A,*}'}{A_*} > 0$, and $\tbu \coloneqq (\bv,c_1^{\nicefrac{1}{2}} h,c_2^{\nicefrac{1}{2}} A)$, and we note that
\begin{equation}\label{eq:id tested eq}
    \int_G \bu^\top \cdot (\bv,c_1 h,c_2 A)^\top \srd \bx = \| \tbu \|_{\rL^2(G)}^2.
\end{equation}
Integrating by parts, and invoking the Neumann boundary conditions for $h$ and $A$, we find that
\begin{equation}\label{eq:tested bal laws}
    -\int_G d_\rh \Delta_\rN h \cdot c_1 h \srd \bx = d_\rh c_1 \cdot \| \nabla h \|_{\rL^2(G)}^2 \tand -\int_G d_\rA \Delta_\rN A \cdot c_2 A \srd \bx = d_\rA c_2 \cdot \| \nabla A \|_{\rL^2(G)}^2.
\end{equation}
Thus, testing the above eigenvalue problem for $\ALFI_0$, and using \eqref{eq:est of fastice op}, \eqref{eq:id tested eq}, and \eqref{eq:tested bal laws} as well as the concrete choice of $c_1$, $c_2 > 0$, we find that
\begin{equation}\label{eq:tested eigenvalue eq}
    \begin{aligned}
        0
        &= \lambda \| \tbu \|_{\rL^2(G)}^2 - \int_G \AFID(\bu_*) \bv \cdot \bv \srd \bx + d_\rh c_1 \cdot \| \nabla h \|_{\rL^2(G)}^2 + d_\rA c_2 \cdot \| \nabla A \|_{\rL^2(G)}^2\\
        &\quad + \bigl(P_{h,*}' - c_1 h_*\bigr) \int_G \nabla h \cdot \bv \srd \bx + \bigl(P_{A,*}' - c_2 A_*\bigr) \int_G \nabla A \cdot \bv \srd \bx\\
        &\ge \lambda \| \tbu \|_{\rL^2(G)}^2 + C \left(\| \bv \|_{\rH^1(G)}^2 + \| \nabla h \|_{\rL^2(G)}^2 + \| \nabla A \|_{\rL^2(G)}^2\right).
    \end{aligned}
\end{equation}
From the previous estimate, we deduce that $\lambda \in \bR$ and $\lambda \le 0$.
By virtue of the $q$-independence of the spectrum of $\ALFI_0$, we obtain $\sigma(\ALFI_0) \setminus \{0\} \subset \bC_+$.
From $\lambda = 0$, we derive that $\bv = 0$ as well as $h$ and $A$ constant for $\bu = (\bv,h,A) \in \rN(\ALFI_0)$, as asserted in~(i).

Let us now elaborate on the proof of~(ii).
In this case, it is not possible to cancel the terms related with $h_* \div \bv$ and $A_* \div \bv$ with the ice strength $P'$, since the latter vanishes.
Instead, it is necessary to absorb these terms into the terms resulting from the fastice operator and the Neumann Laplacians.
In fact, in this case, we test the eigenvalue problem \eqref{eq:eigenvalue probl} by $\bu = (\bv,h,A)$ and obtain
\begin{equation}\label{eq:eigenvalue probl k_T = 1}
    \begin{aligned}
        0
        &= \lambda \| \bu \|_{\rL^2(G)}^2 - \int_G \AFID(\bu_*) \bv \cdot \bv \srd \bx + d_\rh \cdot \| \nabla h \|_{\rL^2(G)}^2 + d_\rA \cdot \| \nabla A \|_{\rL^2(G)}^2\\
        &\quad - h_* \int_G \nabla h \cdot \bv \srd \bx - A_* \int_G \nabla A \cdot \bv \srd \bx.
    \end{aligned}
\end{equation}
Thus, using Young's inequality, for $\gamma > 0$, we find that 
\begin{equation*}
    - h_* \int_G \nabla h \cdot \bv \srd \bx - A_* \int_G \nabla A \cdot \bv \srd \bx \ge - h_* \frac{\gamma}{2} \cdot \| \nabla h \|_{\rL^2(G)}^2 - A_* \frac{\gamma}{2} \cdot \| \nabla A \|_{\rL^2(G)}^2 - \frac{h_* + A_*}{2 \gamma} \cdot \| \bv \|_{\rL^2(G)}^2.
\end{equation*}
We then choose $0 < \gamma < \min\left\{2 \frac{d_\rh}{h_*},2 \frac{d_\rA}{A_*}\right\} \eqqcolon \gamma_*$ sufficiently small.
With regard to \eqref{eq:est of fastice op}, there exists a constant $C_{\mathrm{K},\mathrm{P}} > 0$ depending on the constants from the Korn and Poincar\'e inequalities such that
\begin{equation*}
    -\int_G \AFID(\bu_*) \bv \cdot \bv \srd \bx \ge \frac{\tP_* C_{\mathrm{K},\mathrm{P}}}{e^2 \Delta_{\min}} \cdot \| \bv \|_{\rH^1(G)}^2.
\end{equation*}
Therefore, if $0 < \Delta_{\min} < \frac{\tP_* C_{\mathrm{K},\mathrm{P}}}{e^2} \cdot \frac{2 \gamma}{h_* + A_*} \eqqcolon \Delta_*$ for $\gamma < \gamma_*$ as specified above, from \eqref{eq:eigenvalue probl k_T = 1}, it follows that there is $\Tilde{C} > 0$ such that
\begin{equation}\label{eq:tested eigenvalue eq k_T = 1}
    0 \ge \lambda \| \bu \|_{\rL^2(G)}^2 + \Tilde{C} \left(\| \bv \|_{\rH^1(G)}^2 + \| \nabla h \|_{\rL^2(G)}^2 + \| \nabla A \|_{\rL^2(G)}^2\right),
\end{equation}
so the assertion of~(ii) can be deduced in the same way as above for~(i).
\end{proof}

In the next lemma, we elaborate on the shape of the set of equilibria $\cE$ close to an equilibrium of the aforementioned form.
Again, the case $k_{\rt} \equiv 1$ requires special treatment.

\begin{lem}\label{lem:shape of equil}
Let $p$, $q \in (1,\infty)$ be such that \eqref{eq:cond p and q glob small data} is valid, and let $\bu_* = (0,h_*,A_*)$ be an equilibrium with $h_* > \kappa$ and $A_* \in (0,1]$ constant.
\begin{enumerate}[(i)]
    \item If $k_{\rt} \in [0,1)$, then near $\bu_*$, the set of equilibria $\cE$ is a $\rC^1$-manifold in $\rX_1$, and the tangent space of $\cE$ at $\bu_*$ is isomorphic to $\rN(\ALFI_0) = \{0\} \times \bR \times \bR$.
    \item In the case $k_{\rt} \equiv 1$, there is $\Delta_* > 0$ such that if $\Delta_{\min} < \Delta_*$, near $\bu_*$, the set of equilibria $\cE$ is a $\rC^1$-manifold in $\rX_1$, and the tangent space of $\cE$ at $\bu_*$ is isomorphic to $\rN(\ALFI_0) = \{0\} \times \bR \times \bR$.
\end{enumerate}
\end{lem}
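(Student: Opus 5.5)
The plan is to identify a two-dimensional family of explicit equilibria and then show, via the implicit function theorem, that near $\bu_*$ it exhausts $\cE$.

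\emph{The explicit family.} For constants $(h,A)$ near $(h_*,A_*)$, with $h > \kappa$ and $A \in (0,1+\delta)$, the triple $(0,h,A)$ is an equilibrium of \eqref{eq:ACP simplified fastice}. Indeed, $\bv = 0$ and constant $h$, $A$ make every term in $F_s$ and in $\ALFI(0,h,A)(0,h,A)$ vanish. So the map
\begin{equation*}
    \Phi \colon (h,A) \mapsto (0,h,A)
\end{equation*}
is a smooth embedding of an open set of $\bR^2$ into $\rX_1$ with image in $\cE$. Its tangent space at $\bu_*$ is $\{0\} \times \bR \times \bR$, which equals $\rN(\ALFI_0)$ by \autoref{lem:spectral props of tot lin}(b)(i), or by (b)(ii) when $k_{\rt} \equiv 1$ and $\Delta_{\min} < \Delta_*$. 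The $\Delta_*$ in part~(ii) is the one from that lemma.

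\emph{Local uniqueness.} It remains to show that every equilibrium in a small $\rX_1$-ball around $\bu_*$ lies in the image of $\Phi$. Set
\begin{equation*}
    G(\bu) \coloneqq \ALFI(\bu)\bu - F_s(\bu), \qquad G \colon V \cap \rX_1 \to \rX_0 .
\end{equation*}
By \autoref{lem:nonlin ests}, and since the coefficients are smooth in $(\nabla \bv, h, A)$ with $\Delta(\dbeps) \ge \Delta_{\min} > 0$, the map $G$ is $\rC^1$ near $\bu_*$ with $G'(\bu_*) = \ALFI_0$. Because $\rX_1 \hookrightarrow \rC^{1,\alpha}$ under \eqref{eq:cond p and q glob small data}, the Nemytskii operators are differentiable. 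Then:
\begin{enumerate}[(1)]
    \item $0$ is a semisimple eigenvalue of $\ALFI_0$. If $\ALFI_0^2 \bu = 0$, testing $\ALFI_0 \bu = (0,a,b)$ with the weighted pairing used in \eqref{eq:tested eigenvalue eq} (respectively \eqref{eq:tested eigenvalue eq k_T = 1}) gives a nonnegative coercive term. The inner product of $(0,a,b)$ with $\bu$ equals the integral of $c_1 a h + c_2 b A$. Integrating the $h$- and $A$-equations over $G$, with $\bv = 0$ on $\del G$ and Neumann data, forces $a = b = 0$. This is the same mean-zero argument that will be used in the time-periodic section.
    \item Hence $\rX_0 = \rN(\ALFI_0) \oplus \rR(\ALFI_0)$. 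The range $\rR(\ALFI_0)$ consists of all $(\bff, g, k)$ with $g$ and $k$ of mean zero. This follows since the spectral projection onto the kernel is given by the spatial averages of the $h$- and $A$-components, as integration of the balance laws shows.
    \item Write $\bu = \bu_* + z + w$ with $z \in \rN(\ALFI_0)$ and $w \in \rX_1 \cap \rR(\ALFI_0)$. Apply the implicit function theorem to $(z,w) \mapsto P_{\rR} G(\bu_* + z + w)$, whose derivative in $w$ at $0$ is the isomorphism $\ALFI_0|_{\rR}$. This yields a unique $\rC^1$ map $w = \psi(z)$ with $\psi(0) = 0$ and $\psi'(0) = 0$.
    \item Every $\bu \in G^{-1}(0)$ near $\bu_*$ satisfies $w = \psi(z)$. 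The family $\Phi$ satisfies this as well, with $w = 0$, because $G(\Phi(h,A)) = 0$. By uniqueness, $\psi \equiv 0$, so the zeros of $G$ near $\bu_*$ are exactly the graph over $\rN(\ALFI_0)$, which is $\{(0,h,A)\}$.
\end{enumerate}

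This shows $\cE$ is a $\rC^1$-manifold of dimension two with the stated tangent space. A subtlety is that $G$ itself does not map into $\rR(\ALFI_0)$. However, integrating the $h$- and $A$-components of $G$ over $G$ gives zero because of the divergence form with $\bv = 0$ and Neumann conditions, so $P_{\rN} G \equiv 0$ automatically and nothing is lost by projecting.

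\emph{Main obstacle.} The main difficulty is step~(1): semisimplicity, and in particular proving $\rR(\ALFI_0) \cap \rN(\ALFI_0) = \{0\}$ in the case $k_{\rt} \equiv 1$. There the symmetrizing weights $c_1$, $c_2$ are unavailable, so one must redo the Young's inequality absorption under $\Delta_{\min} < \Delta_*$ for the equation $\ALFI_0 \bu = (0,a,b)$ rather than for the homogeneous one.
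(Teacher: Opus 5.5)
Your argument is correct, but it takes a different route from the paper.

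\textbf{The paper's route.} The paper never linearizes. It takes an arbitrary equilibrium $\bu \in V \cap \rX_1$ with $\| \bu - \bu_* \|_{\rX_\gamma} < r$ and tests the stationary system directly, with $(\bv, c_1 h, c_2 A)$, or with $(\bv,h,A)$ when $k_{\rt} \equiv 1$. Korn's and Poincar\'e's inequalities and smallness of $r$ absorb the convective term and the mismatch terms such as $\partial_h P'(h,A) - c_1 h$. This yields $\bv = 0$ and $\nabla h = \nabla A = 0$, so $\cE$ is locally the affine plane $\bu_* + \rN(\ALFI_0)$.

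\textbf{Your route.} You do a Lyapunov--Schmidt reduction. The linear information (the kernel from \autoref{lem:spectral props of tot lin}, the mean-zero range, semisimplicity) together with $\rC^1$-smoothness of $G$ on $\rX_1$ shows that zeros near $\bu_*$ form a graph over $\rN(\ALFI_0)$. The explicit two-parameter family of constant states then forces the graph map to vanish.

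\textbf{What each buys.} The paper's energy argument needs no differentiability of the nonlinearity and no spectral projections. It also gives a slightly stronger statement: triviality of equilibria in an $\rX_\gamma$-ball rather than an $\rX_1$-ball. Your argument is softer and avoids all nonlinear absorption estimates. It also delivers the semisimplicity of \autoref{lem:zero semi-simple ev} as a by-product. It localizes only in $\rX_1$, which is exactly what \autoref{lem:gen princ of lin stab} requires.

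\textbf{Two remarks.}
\begin{enumerate}[(1)]
    \item The ``hence'' in your step (2), passing from ascent one to $\rX_0 = \rN(\ALFI_0) \oplus \rR(\ALFI_0)$, uses that $0$ is a pole of the resolvent. Justify this explicitly via the compact resolvent of $\ALFI_0$ from \autoref{lem:spectral props of tot lin}(b).
    \item Your closing paragraph misidentifies the obstacle. Your own integration argument in step (1) gives $\rR(\ALFI_0) \cap \rN(\ALFI_0) = \{0\}$ for every $k_{\rt}$, with no coercivity or Young absorption. In your approach, the restriction $\Delta_{\min} < \Delta_*$ for $k_{\rt} \equiv 1$ enters only through the identification of $\rN(\ALFI_0)$ quoted from \autoref{lem:spectral props of tot lin}(b)(ii).
\end{enumerate}
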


\begin{proof}
Consider an equilibrium $\bu = (\bv,h,A) \in V \cap \rX_1$ such that $\| \bu - \bu_* \|_{\rX_\gamma} < r$ for some given $r > 0$.
Again, we first treat the case $k_{\rt} \in [0,1)$ and then deal with the case $k_{\rt} \equiv 1$, since in the latter case, we have $P'(h,A) = 0$.

From the resulting relation $\ALFI(\bu) \bu = F_s(\bu)$, it follows that
\begin{equation}\label{eq:rel equil}
    0 = \begin{pmatrix}
        2 \rho h \left(-\AFID(\bu) \bv + (\bv \cdot \nabla) \bv\right) + \nabla P'(h,A)\\
        -d_\rh \Delta_\rN h + \div(\bv h)\\
        -d_\rA \Delta_\rN A + \div(\bv A)
    \end{pmatrix}.
\end{equation}
From $\bu \in V$ such that $\| \bu - \bu_* \|_{\rX_\gamma} < r$, it first follows that there is a constant $c_e > 0$ with
\begin{equation*}
    \tP(h,A) \ge (1+k_{\rt}) \kappa P^* \exp(-C) \eqqcolon \tP_{**} \tand \frac{1}{\tri(\dbeps(\bv))} \ge \frac{1}{\sqrt{\Delta_{\min}^2 + c_e r^2}}.
\end{equation*}
Thus, an integration by parts joint with the homogeneous Dirichlet boundary conditions of $\bv$, the identity $\tri^2(\dbeps) = \dbeps^\top \bS \dbeps$, and Poincar\'e's as well as Korn's inequality yield that
\begin{equation*}
    \begin{aligned}
        -\int_G 2 \rho h \AFID(\bu) \bv \cdot \bv \srd \bx 
        &= -\int_G \div\left(\tP(h,A) \frac{\bS \dbeps}{\tri(\dbeps)}\right) \cdot \bv \srd \bx\\
        &\ge \frac{\tP_{**}}{\sqrt{\Delta_{\min}^2 + c_e r^2}} \int_G \dbeps^\top \bS \dbeps \srd \bx \ge \frac{C}{\sqrt{\Delta_{\min}^2 + c_e r^2}} \cdot \| \bv \|_{\rH^1(G)}^2
    \end{aligned}
\end{equation*}
For the convective term, we use H\"older's inequality together with the embedding $\rX_\gamma \hookrightarrow \rC^1(\oG)^4$, the shape of the equilibrium and the assumption that $\| \bu - \bu_* \|_{\rX_\gamma} < r$ to infer that
\begin{equation*}
    2 \rho \int_G h(\bv \cdot \nabla) \bv \cdot \bv \srd \bx \le C \cdot \| h \|_{\rL^\infty(G)} \cdot \| \nabla \bv \|_{\rL^\infty(G)} \cdot \| \bv \|_{\rL^2(G)}^2 \le C'(1+r)r\cdot \| \bv \|_{\rH^1(G)}^2
\end{equation*}
for some constant $C' > 0$.
Now, we set $c_1 \coloneqq 2 \rho P_{h,*}'$, $c_2 \coloneqq 2 \rho P_{A,*}'$, and integrate by parts to get
\begin{equation*}
    \begin{aligned}
        &\quad \int_G \nabla P'(h,A) \cdot \bv \srd \bx + c_1 \int_G \div(\bv h) h \srd \bx + c_2 \int_G \div(\bv A) A \srd \bx\\
        &= \int_G \bigl(\del_h P'(h,A) - c_1 h\bigr) \nabla h \cdot \bv \srd \bx + \int_G \bigl(\del_A P'(h,A) - c_2 A\bigr) \nabla A \cdot \bv \srd \bx.
    \end{aligned}
\end{equation*}
Therefore, testing \eqref{eq:rel equil} by $(\bv,c_1 h,c_2 A)$, we find that
\begin{equation}\label{eq:tested equil rel}
    \begin{aligned}
        0
        &\ge \left(\frac{C}{\sqrt{\Delta_{\min}^2 + c_e r^2}} - C' (1+r)r\right) \cdot \| \bv \|_{\rH^1(G)}^2 + c_1 d_\rh \cdot \| \nabla h \|_{\rL^2(G)}^2 + c_2 d_\rA \cdot \| \nabla A \|_{\rL^2(G)}^2\\
        &\quad + \int_G \bigl(\del_h P'(h,A) - c_1 h\bigr) \nabla h \cdot \bv \srd \bx + \int_G \bigl(\del_A P'(h,A) - c_2 A\bigr) \nabla A \cdot \bv \srd \bx.
    \end{aligned}
\end{equation}
Straightforward calculations reveal that there exist $C_1$, $C_2 > 0$ such that
\begin{equation*}
    \| \del_h P'(h,A) - c_1 h \|_{\rL^\infty(G)} \le C_1 (1+r)r \tand \| \del_A P'(h,A) - c_2 A \|_{\rL^\infty(G)} \le C_2 (1+r)r.
\end{equation*}
Inserting this into \eqref{eq:tested equil rel}, using H\"older's as well as Young's inequality, and choosing $r > 0$ sufficiently small, we argue that
\begin{equation*}
    \begin{aligned}
        0
        &\ge \left(\frac{C}{\sqrt{\Delta_{\min}^2 + c_e r^2}} - C' (1+r)r\right) \cdot \| \bv \|_{\rH^1(G)}^2 + \bigl(c_1 d_\rh - C_1 (1+r)r) \cdot \| \nabla h \|_{\rL^2(G)}^2\\
        &\quad + \bigl(c_2 d_\rA - C_2 (1+r)r\bigr) \cdot \| \nabla A \|_{\rL^2(G)}^2\\
        &\ge \Tilde{C}\left(\| \bv \|_{\rH^1(G)}^2 + \| \nabla h \|_{\rL^2(G)}^2 + \| \nabla A \|_{\rL^2(G)}^2\right).
    \end{aligned}
\end{equation*}
In other words, an equilibrium $\bu \in V \cap \rX_1$ such that $\| \bu - \bu_* \|_{\rX_\gamma} < r$ for sufficiently small $r > 0$ satisfies $\bv = 0$ as well as $h$ and $A$ constant, so
\begin{equation*}
    B_{\rX_\gamma \cap \cE}(\bu_*,r) = \{0\} \times \bR \times \bR = \rN(\ALFI_0)
\end{equation*}
by \autoref{lem:spectral props of tot lin}(b).
In particular, near $\bu_*$, the set $\cE$ is a two-dimensional $\rC^1$-manifold, and the tangent space of $\cE$ near $\bu_*$ is especially isomorphic to $\rN(\ALFI_0)$.
This completes the proof of~(i).

With regard to~(ii), we observe that it is again not possible to use $P'$ to absorb the terms associated with $\div(\bv h)$ and $\div(\bv A)$, since $P'$ vanishes in the present case $k_{\rt} \equiv 1$.
Since we are interested in $r > 0$ small, we may consider $r \le 1$.
It then follows that there exists a constant $C' > 0$ such that
\begin{equation*}
    \| h \|_{\rL^\infty(G)} \le \left(h_* + \| h - h_* \|_{\rL^\infty(G)}\right) \le \frac{C'}{2} \tand \| A \|_{\rL^\infty(G)} \le \left(A_* + \| A - A_* \|_{\rL^\infty(G)}\right) \le \frac{C'}{2}.
\end{equation*}
Thus, for $\gamma > 0$, an integration by parts joint with Young's inequality and the preceding estimate implies
\begin{equation*}
    \int_G \div(\bv h)h \srd \bx + \int_G \div(\bv A)A \srd \bx \ge -\frac{C'\gamma}{4} \cdot \| \nabla h \|_{\rL^2(G)}^2 - \frac{C' \gamma}{4} \cdot \| \nabla A \|_{\rL^2(G)}^2 - \frac{C'}{2 \gamma} \cdot \| \bv \|_{\rL^2(G)}^2.
\end{equation*}
In the present case, we test \eqref{eq:rel equil} by $(\bv,h,A)$, and the above estimates lead to
\begin{equation*}
    0 \ge \left(\frac{C}{\sqrt{\Delta_{\min}^2 + c_e r^2}} - \frac{C'}{2 \gamma}\right) \cdot \| \bv \|_{\rH^1(G)}^2 + \left(d_\rh - \frac{C' \gamma}{4}\right) \cdot \| \nabla h \|_{\rL^2(G)}^2 + \left(d_\rA - \frac{C' \gamma}{4}\right) \cdot \| \nabla A \|_{\rL^2(G)}^2.
\end{equation*}
We choose $0 < \gamma < \min\left\{\frac{4 d_\rh}{C'},\frac{4 d_\rA}{C'}\right\} \eqqcolon \gamma_*$, $0 < r < \frac{\sqrt{2} \gamma C}{\sqrt{c_e} C'}$, and finally $0 < \Delta_{\min} < \frac{\sqrt{2} \gamma C}{C'} \eqqcolon \Delta_*$.
In total, this yields the existence of a constant $\Tilde{C} > 0$ such that
\begin{equation*}
    0 \ge \Tilde{C} \left(\| \bv \|_{\rH^1(G)}^2 + \| \nabla h \|_{\rL^2(G)}^2 + \| \nabla A \|_{\rL^2(G)}^2\right),
\end{equation*}
so the assertion of~(ii) follows in the same way as that of~(i).
\end{proof}

The following lemma reveals that zero is a semi-simple eigenvalue of the total linearization $\ALFI_0$.

\begin{lem}\label{lem:zero semi-simple ev}
Consider $\bu_* = (0,h_*,A_*)$, where $h_* > \kappa$ and $A_* \in (0,1]$ are constant, and recall the associated total linearization $\ALFI_0$ from \eqref{eq:tot lin}.
\begin{enumerate}[(i)]
    \item If $k_{\rt} \in [0,1)$, then $\rN(\ALFI_0) \oplus \rR(\ALFI_0) = \rX_0$, so zero is a semi-simple eigenvalue of $\ALFI_0$.
    \item If $k_{\rt} \equiv 1$, there exists $\Delta_* > 0$ so that if $\Delta_{\min} < \Delta_*$, then $\rN(\ALFI_0) \oplus \rR(\ALFI_0) = \rX_0$.
\end{enumerate}
\end{lem}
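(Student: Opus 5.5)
Since $\ALFI_0$ has compact resolvent by \autoref{lem:spectral props of tot lin}(b), zero is an isolated eigenvalue of finite algebraic multiplicity. For such operators $\rX_0 = \rN(\ALFI_0) \oplus \rR(\ALFI_0)$ holds as soon as $\rN((\ALFI_0)^2) = \rN(\ALFI_0)$: the Riesz projection onto the generalized eigenspace then has range $\rN(\ALFI_0)$ and kernel $\rR(\ALFI_0)$. So the plan is to show that there is no nontrivial Jordan chain, i.e., that $\ALFI_0 \bu = \bw$ with $\bw \in \rN(\ALFI_0) = \{0\} \times \bR \times \bR$ forces $\bw = 0$. By the $q$-independence of the spectral data and elliptic regularity, it suffices to argue in an $\rL^2$-framework, where the testing arguments of \autoref{lem:spectral props of tot lin} are available.

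Let $\bw = (0,a,b)$ with constants $a$, $b$, and suppose $\bu = (\bv,h,A) \in \rX_1$ solves $\ALFI_0 \bu = \bw$. The second and third components read $-d_\rh \Delta_\rN h + h_* \div \bv = a$ and $-d_\rA \Delta_\rN A + A_* \div \bv = b$. Integrating over $G$ and using the Neumann boundary conditions for $h$, $A$ together with $\bv = 0$ on $\dG$ (so that $\int_G \div \bv \srd \bx = 0$), the left-hand sides have vanishing integral, hence $a|G| = b|G| = 0$. Therefore $\bw = 0$, i.e., $\rN((\ALFI_0)^2) = \rN(\ALFI_0)$. Note that this argument works uniformly in $k_{\rt} \in [0,1]$, so (ii) follows from the same computation, provided the input from \autoref{lem:spectral props of tot lin}(b)(ii) is available; that is exactly where $\Delta_{\min} < \Delta_*$ enters.

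To make the decomposition explicit, it is worth identifying $\rR(\ALFI_0)$ as the closed subspace $\{(\bff,g,k) \in \rX_0 : \int_G g \srd \bx = \int_G k \srd \bx = 0\}$. The inclusion $\subset$ is the averaging argument above. For $\supset$, I would solve $\ALFI_0 \bu = (\bff,g,k)$ on the mean-zero subspace via Fredholm theory: $\ALFI_0$ has index zero by compactness of the resolvent, and its kernel restricted to mean-zero $h$, $A$ is trivial. Because $\rN(\ALFI_0)$ consists of constants in the $h$- and $A$-components, it meets this subspace only in $0$, and the two together span $\rX_0$ by codimension count ($2 = \dim \rN(\ALFI_0)$). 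This confirms $\rN(\ALFI_0) \oplus \rR(\ALFI_0) = \rX_0$ and hence that zero is semi-simple.

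The main obstacle is mild: making sure the equality of the range with the mean-zero space is justified via closed range/Fredholm index zero rather than assumed. Alternatively, one can bypass the identification altogether and rely only on the abstract fact that, for an isolated eigenvalue with finite-rank Riesz projection, the absence of Jordan chains is equivalent to semi-simplicity, which the averaging argument already delivers.
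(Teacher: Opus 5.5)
Your proposal is correct, but it reaches the decomposition by a different route than the paper.

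\textbf{The paper's route.} It passes to the mean-zero space $\rX_0^{\rm}=\rL^q(G)^2\times\rL_0^q(G)\times\rL_0^q(G)$. It re-runs the testing estimates of \autoref{lem:spectral props of tot lin} to see that the restriction $\ALFI_{0,\rm}$ is injective, and infers $0\in\rho(\ALFI_{0,\rm})$ from the compact resolvent. This gives $\rX_0^{\rm}\subset\rR(\ALFI_0)$ and hence $\rN(\ALFI_0)+\rR(\ALFI_0)=\rX_0$. The trivial intersection is then shown by splitting a preimage into its mean-zero and average parts.

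\textbf{Your route.} You isolate the conservation structure $\rR(\ALFI_0)\subset\rX_0^{\rm}$ (integrate the $h$- and $A$-rows over $G$) and let abstract spectral theory do the rest. The first variant uses the absence of Jordan chains and the Riesz projection. This is sound, since $\rN((\ALFI_0)^2)=\rN(\ALFI_0)$ forces $\ALFI_0$ to vanish on the finite-dimensional range of that projection. The second variant uses Fredholm index zero: $\dim\rN(\ALFI_0)=2$ matches the codimension of $\rX_0^{\rm}$, which gives the sharper identification $\rR(\ALFI_0)=\rX_0^{\rm}$.

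\textbf{What each buys.} Your argument uses nothing beyond the compact resolvent and $\rN(\ALFI_0)=\{0\}\times\bR\times\bR$. The detour to an $\rL^2$-framework is unnecessary, since the averaging works directly in $\rL^q$. In particular, for $k_{\rt}\equiv1$ one has $P_{h,*}'=P_{A,*}'=0$, so $\ALFI_0$ is block lower triangular. The kernel identification then follows from $0\in\rho(\AFID(\bu_*))$ in \autoref{lem:spectral props of tot lin}(a), together with $\rN(\Delta_\rN)$ consisting of constants. So the smallness of $\Delta_{\min}$ is not actually needed in this lemma. The paper's route, in turn, produces the invertibility $0\in\rho(\ALFI_{0,\rm})$ on $\rX_0^{\rm}$ explicitly, and this is exactly what is reused for the time-periodic problem in \autoref{sec:time-per sols}. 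Your argument recovers it as well once $\rR(\ALFI_0)=\rX_0^{\rm}$ is known, since $\rX_1=\rX_1^{\rm}\oplus\rN(\ALFI_0)$.
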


\begin{proof}
Note that in the present proof, we will not strictly distinguish the cases $k_{\rt} \in [0,1)$ and $k_{\rt} \equiv 1$, but we will express when we use modified results for $k_{\rt} \equiv 1$.

For $\rL_0^q(G) = \left\{f \in \rL^q(G) : \int_G f \srd \bx = 0\right\}$ denoting the space of functions with average zero in $\rL^q(G)$, we introduce the modified ground space
\begin{equation}\label{eq:mod ground space}
    \rX_0^{\rm} \coloneqq \rL^q(G)^2 \times \rL_0^q(G) \times \rL_0^q(G),
\end{equation}
while $\ALFI_{0,\rm}$ represents the restriction of the total linearization $\ALFI_0$ to $\rX_0^{\rm}$.
Testing $\ALFI_0 \bu = 0$ by $\bu \in \rD(\ALFI_{0,\rm})$, as in \eqref{eq:tested eigenvalue eq}, see also \eqref{eq:tested eigenvalue eq k_T = 1} for the case $k_{\rt} \equiv 1$, we find that there exists $C > 0$ such that
\begin{equation*}
    0 \ge C \cdot \left(\| \bv \|_{\rH^1(G)}^2 + \| \nabla h \|_{\rL^2(G)}^2 + \| \nabla A \|_{\rL^2(G)}^2\right).
\end{equation*}
Since $h$, $A \in \rL_0^q(G)$, in addition to $\bv = 0$, it also follows that $h = A = 0$.
Note that the operator~$\ALFI_{0,\rm}$ also has a compact resolvent, so the spectrum of this operator is also $q$-independent and consists of eigenvalues, so $0 \in \rho(\ALFI_{0,\rm})$.
By $\rN(\ALFI_0) = \{0\} \times \bR \times \bR$, to verify that $\rX_0 = \rN(\ALFI_0) + \rR(\ALFI_0)$, it suffices to prove that
\begin{equation*}
    \rL^q(G)^2 \times \rL_0^q(G) \times \rL_0^q(G) \subset \rR(\ALFI_0).
\end{equation*}
To this end, let $\bff \in \rL^q(G)^2 \times \rL_0^q(G) \times \rL_0^q(G)$.
Thanks to $0 \in \rho(\ALFI_0)$, we find that there is $\bu \in \rD(\ALFI_{0,\rm})$ such that $\ALFI_0 \bu = \ALFI_{0,\rm} \bu = \bff$, as desired.

To complete the proof, we need to show that $\rN(\ALFI_0) \cap \rR(\ALFI_0) = \{0\}$, so let $\bu \in \rN(\ALFI_0) \cap \rR(\ALFI_0)$.
The above form of $\rN(\ALFI_0)$ reveals that $\bu = (0,c_\rh,c_\rA)$ for constants $c_\rh$ and $c_\rA$.
By $\bu \in \rR(\ALFI_0)$, we find that there is $\tbu \in \rD(\ALFI_0)$ so that $\ALFI_0 \tbu = \bu$, and we split into the mean value zero part and the average part, so
\begin{equation*}
    \tbu = (\tbv,\th,\tA) = (\tbv,\th_\rm,\tA_\rm) + (0,\th_\avg,\tA_\avg) \eqqcolon \tbu_\rm + \tbu_\avg.
\end{equation*}
By construction, we have $\tbu_\rm \in \rX_0^\rm$, so $\tbu_\rm \in \rD(\ALFI_{0,\rm})$, while $\tbu_\avg \in \rN(\ALFI_0)$.
This implies that
\begin{equation*}
    \bu = \ALFI_0 \tbu = \ALFI_0 \tbu_\rm + \ALFI_0 \tbu_\avg = \ALFI_{0,\rm} \tbu_\rm,
\end{equation*}
i.e., $\bu \in \rR(\ALFI_{0,\rm})$, which in turn yields that $c_\rh$, $c_\rA \in \rL_0^q(G)$.
This requires that $c_\rh = c_\rA = 0$, so $\bu = 0$ and thus also $\rN(\ALFI_0) \cap \rR(\ALFI_0) = \{0\}$, finishing the proof.
\end{proof}

The previous lemmas enable us to prove \autoref{thm:glob ex close to equil}.

\begin{proof}[Proof of \autoref{thm:glob ex close to equil}]
We are in the scope of \autoref{lem:gen princ of lin stab}.
In fact, the shape of the set of equilibria $\cE$ as required in \autoref{lem:gen princ of lin stab}(a) and~(b) has been discussed in \autoref{lem:shape of equil}, in \autoref{lem:zero semi-simple ev}, we have shown that zero is a semi-simple eigenvalue of $\ALFI_0$ as needed in \autoref{lem:gen princ of lin stab}(c), and in \autoref{lem:spectral props of tot lin}, we have determined the spectrum of the total linearization $\ALFI_0$ as necessary for \autoref{lem:gen princ of lin stab}(d).
\end{proof}

\section{Time-periodic solutions to the landfast ice model}\label{sec:time-per sols}

In this section, we discuss the existence of a time-periodic strong solution to the landfast ice model in the presence of time-periodic forcing terms as stated in \autoref{thm:time-per sols}.
A particular application is the presence of time-periodic wind forces as in the simulations shown in \autoref{fig:Ex2}.
The main idea here is to use the Arendt-Bu theorem, see \autoref{lem:Arendt-Bu}.
A key difficulty is that in view of the Neumann Laplacian operators in the balance laws, the linearized operator matrix is not invertible.
We thus consider time-periodic solutions as deviations of equilibrium solutions and introduce a modified functional analytic framework involving spaces of functions with average zero as in the proof of \autoref{lem:zero semi-simple ev}.

For a fixed time period $T > 0$, and for suitable $T$-periodic external forcing terms $f_\ice$, $f_\rh$ as well as $f_\rA$, the landfast ice model in the periodic setting is given by 
\begin{equation}\label{eq:time-per landfast ice model}
    \left\{
    \begin{aligned}
        \del_t \bv + (\bv \cdot \nabla) \bv
        &= \frac{1}{\rho h} \div(\bsigma) - C_{\cor} \bk \times (\bv - \bv_\ro)\\
        &\quad + \frac{1}{\rho h}\bigl(f_\ra + f_\ro(\bv) + f_\rb(\bv,h,A)\bigr) + f_\ice, &&\tin G \times \bR,\\
        \del_t h + \div(\bv h)
        &= d_\rh \Delta h + f_\rh, &&\tin G \times \bR,\\
        \del_t A + \div(\bv A)
        &= d_\rA \Delta A + f_\rA, &&\tin G \times \bR,\\
        \bv = 0, \enspace \del_\nu h&= \del_\nu A = 0, &&\ton \del G \times \bR,\\
        \bv(t) = \bv(t+T), \enspace h(t)
        &= h(t+T), \enspace A(t) = A(t+T), &&\tfor t \in \bR.
    \end{aligned}
    \right.
\end{equation}
Next, we invoke a constant equilibrium solution of the form $(0,h_*,A_*)$ to the unforced problem as investigated in \autoref{sec:glob wp close to equil}.
If $\bu = (\bv,h,A)$ is a solution to \eqref{eq:time-per landfast ice model}, then $\tbu \coloneqq \bu - \bu_* = (\tbv,\th,\tA)$ solves
\begin{equation}\label{eq:time-per landfast ice model around equil}
    \left\{
    \begin{aligned}
        \del_t \tbv - \frac{1}{\rho (\th + h_*)} \div(\bsigma)
        &= -(\tbv \cdot \nabla) \tbv - C_{\cor} \bk \times (\tbv - \bv_\ro) + \frac{1}{\rho (\th + h_*)}\bigl(f_\ra\\
        &\qquad + f_\ro(\tbv) + f_\rb(\tbv,\th + h_*,\tA + A_*)\bigr) + f_\ice, &&\tin G \times \bR,\\
        \del_t \th - d_\rh \Delta \th
        &= - h_* \div \tbv - \div(\tbv \th) + f_\rh, &&\tin G \times \bR,\\
        \del_t \tA - d_\rA \Delta \tA
        &= - A_* \div \tbv - \div(\tbv \tA) + f_\rA, &&\tin G \times \bR,\\
        \tbv = 0, \enspace \del_\nu \th&= \del_\nu \tA = 0, &&\ton \del G \times \bR,\\
        \tbv(t) = \tbv(t+T), \enspace \th(t)
        &= \th(t+T), \enspace \tA(t) = \tA(t+T), &&\tfor t \in \bR.
    \end{aligned}
    \right.
\end{equation}

We now reformulate \eqref{eq:time-per landfast ice model around equil} as a time-periodic quasilinear evolution equation.
In fact, due to the lack of invertibility of the Neumann Laplacian operator, we invoke here the modified ground space $\rX_0^\rm$ from~\eqref{eq:mod ground space}, where we recall that in the $h$- and $A$-component, we consider the space of $\rL^q$-functions with average zero, denoted by $\rL_0^q(G)$.
Accordingly, we introduce the modified regularity space
\begin{equation*}
    \rX_1^\rm \coloneqq \rW^{2,q}(G)^2 \cap \rW_0^{1,q}(G)^2 \times \rW_\rN^{2,q}(G) \cap \rL_0^q(G) \times \rW_\rN^{2,q}(G) \cap \rL_0^q(G).
\end{equation*}
When considering $p$, $q \in (1,\infty)$ such that \eqref{eq:cond p and q glob small data}, and upon invoking \cite[Sec.~5]{PSW:18} for the interpolation of the average zero condition, we find that the trace space $\rX_\gamma^\rm = (\rX_0^\rm,\rX_1^\rm)_{1-\nicefrac{1}{p},p}$ is given by
\begin{equation*}
    \rX_\gamma^\rm = \rB_{qp,\rD}^{2-\nicefrac{2}{p}}(G)^2 \times \rB_{qp,\rN}^{2-\nicefrac{2}{p}}(G) \cap \rL_0^q(G) \times \rB_{qp,\rN}^{2-\nicefrac{2}{p}}(G) \cap \rL_0^q(G).
\end{equation*}
As observed in the proof of \autoref{lem:shape of equil}, it holds that $\rX_\gamma^\rm \hookrightarrow \rC^1(\oG)^4$.
To make sure that $h$ and~$A$ attain values in their physically reasonable ranges, we introduce an open subset $W \subset \rB_{qp}^{2-\nicefrac{2}{p}}(G)^4$, so we set
\begin{equation}\label{eq:open set W}
    W \coloneqq \left\{\bu = (\bv,h,A) \in \rB_{qp}^{2-\nicefrac{2}{p}}(G)^4 : h > \kappa \tand A \in (0,1+\delta)\right\}.
\end{equation}

Below, we show that indeed, $\bu = \tbu + \bu_* \in W$ if $\bu_* \in W$, and the remaining part $\tbu = (\tbv,\th,\tA)$ is small enough in the modified trace space $\rX_\gamma^\rm$, or in the maximal regularity space
\begin{equation*}
    \bE_1^\rm \coloneqq \rW^{1,p}(0,T;\rX_0^\rm) \cap \rL^p(0,T;\rX_1^\rm).
\end{equation*}
Thus, let $\bu_* \in W$.
Since $W$ is open, there exists $r_0' > 0$ such that
\begin{equation*}
    \oB_{\rB_{qp}^{2-\nicefrac{2}{p}}(G)^4}(\bu_*,r_0') \subset  W.
\end{equation*}
Moreover, we invoke the embedding $\bE_1^\rm \hookrightarrow \rBUC([0,T];\rX_\gamma^\rm)$, see, e.g., \cite[Thm.~III.4.5.10]{Ama:95}, yielding that
\begin{equation*}
    \sup_{t \in [0,T]} \| \bu(t) - \bu_* \|_{\rB_{qp}^{2-\nicefrac{2}{p}}(G)} \le C \cdot \sup_{t \in [0,T]} \| \tbu(t) \|_{\rX_\gamma^\rm} \le C \cdot \| \tbu \|_{\bE_1^\rm}.
\end{equation*}
Thus, setting 
\begin{equation}\label{eq:r_0}
    r_0 \coloneqq \frac{r_0'}{C},
\end{equation}
and considering $\| \tbu \|_{\bE_1^\rm} \le r_0$, we find that $\bu(t) = \tbu(t) + \bu_* \in W$ for all $t \in [0,T]$.
For completeness, we also introduce the data space $\bE_0^\rm \coloneqq \rL^p(0,T;\rX_0^\rm)$ tailored to the present time-periodic problem.

We now make precise the assumptions on the forcing terms.
Note that we impose a smallness assumption on the maximal basal stress parameter $k_2$ provided the $h$-component $h_*$ of the equilibrium exceeds the critical ice thickness $h_{\crit}$ for the basal stress.
Otherwise, with regard to \eqref{eq:basal stress}, the basal stress is absent.

\begin{asu}\label{ass:atm & ocn vels}
For $p$, $q \in (1,\infty)$, suppose that the surface wind and ocean velocities $\bv_\ra$ and $\bv_\ro$ fulfill $\bv_\ra$, $\bv_\ro \in \rL^{2p}(0,T;\rL^{2q}(G)^2)$ as well as $\bv_\ra(0) = \bv_\ra(T)$ and $\bv_\ro(0) = \bv_\ro(T)$.
Furthermore, assume that there exists $\delta_1 > 0$ such that
\begin{equation*}
    \| \bv_\ra \|_{\rL^{2p}(0,T;\rL^{2q}(G))} + \| \bv_\ro \|_{\rL^{2p}(0,T;\rL^{2q}(G))} < \delta_1.
\end{equation*}
If $h_* \ge h_{\crit}$, assume that there exists $\delta_1 > 0$ such that
\begin{equation*}
    \| \bv_\ra \|_{\rL^{2p}(0,T;\rL^{2q}(G))} + \| \bv_\ro \|_{\rL^{2p}(0,T;\rL^{2q}(G))} + k_2 < \delta_1,
\end{equation*}
where we recall the maximal basal stress parameter $k_2$ from \autoref{sec:landfast ice model}.
\end{asu}

We now state the third main result of this paper on the existence of a time-periodic strong solution.
Similarly as for the global well-posedness close to constant equilibria as asserted in \autoref{thm:glob ex close to equil}, we deal with the case of $k_{\rt} \equiv 1$ separately.
Again, this induces a smallness condition on $\Delta_{\min} > 0$.

\begin{thm}\label{thm:time-per sols}
Let $p$, $q \in (1,\infty)$ satisfy \eqref{eq:cond p and q glob small data}, let $\bu_* = (0,h_*,A_*)$, with $h_* > \kappa$ and $A_* \in (0,1]$ constant, and invoke $r_0 > 0$ from \eqref{eq:r_0}.
Besides, assume that $f_\per = (f_\ice,f_\rh,f_\rA) \colon \bR \to \rX_0^\rm$ is $T$-periodic with $\left.f_\per\right|_{(0,T)} \in \bE_0^\rm$.
\begin{enumerate}[(i)]
    \item Let $k_{\rt} \in [0,1)$.
    Then there are $r_1 > 0$ and $\delta_1 > 0$ so that for all $r \in (0,r_1)$, and if $\bv_\ra$, $\bv_\ro$, and $k_2$ satisfy \autoref{ass:atm & ocn vels} for $\delta_1 > 0$, there exists $\delta_2 > 0$ such that if $\| \left. f_\per \right|_{(0,T)} \|_{\bE_0^\rm} < \delta_2$, there is a strong $T$-periodic solution $\tbu \colon \bR \to \rX_0^\rm$ to \eqref{eq:time-per landfast ice model around equil}, the solution $\tbu$ satisfies $\left. \tbu \right|_{(0,T)} \in \oB_{\bE_1^\rm}(0,r)$, and it is unique in $\oB_{\bE_1^\rm}(0,r)$.
    \item Consider $k_{\rt} \equiv 1$.
    Then there is $\Delta_* > 0$ so that if $\Delta_{\min} < \Delta_*$, the assertion of~(i) remains valid.
\end{enumerate}
\end{thm}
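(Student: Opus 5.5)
The plan is to apply the Arendt--Bu theorem (\autoref{lem:Arendt-Bu}) to a linear operator on the modified spaces $\rX_0^\rm$, $\rX_1^\rm$, and then close the argument by a contraction on $\oB_{\bE_1^\rm}(0,r)$. For the linear part I take the total linearization $\ALFI_0$ from \eqref{eq:tot lin}, enlarged by the Coriolis term $C_{\cor}\bk\times\tbv$, and restrict it to $\rX_0^\rm$; call the result $\ALFI_{0,\rm}$. This restriction is well defined because the $h$- and $A$-components of its range have zero average. Indeed, $\int_G \Delta_\rN \th \srd \bx = 0$ by the Neumann condition, and $\int_G h_*\div\tbv \srd \bx = 0$ by $\tbv|_{\del G}=0$.

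\textbf{Step 1: maximal periodic regularity of $\ALFI_{0,\rm}$.}
\begin{itemize}
\item Maximal $\rL^p$-regularity of $\ALFI_{0,\rm}+\omega$ for large $\omega$ follows from \autoref{prop:bdd Hoo-calculus op matrix landfast ice}. The Coriolis term is a bounded perturbation, and the averaging projection commutes with the diagonal part.
\item The resolvent is compact and the spectrum is $q$-independent, as in \autoref{lem:spectral props of tot lin}. Testing $\lambda\bu+\ALFI_{0,\rm}\bu=0$ as in \eqref{eq:tested eigenvalue eq} for $k_{\rt}\in[0,1)$, or as in \eqref{eq:tested eigenvalue eq k_T = 1} for $k_{\rt}\equiv1$ with $\Delta_{\min}<\Delta_*$, gives $\re\lambda\le0$ for eigenvalues of $-\ALFI_{0,\rm}$. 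The Coriolis term contributes nothing to the real part, since $\re\int_G(\bk\times\bv)\cdot\overline{\bv}\srd\bx=0$.
\item If $\re\lambda=0$, the estimate forces $\bv=0$ and $\nabla h=\nabla A=0$. The average-zero constraint then gives $h=A=0$. The same testing handles complex eigenfunctions once it is applied to real and imaginary parts.
\item Hence $\sigma(\ALFI_{0,\rm})\subset\bC_+$. In particular $\frac{2\pi\mri k}{T}\in\rho(-\ALFI_{0,\rm})$ for all $k\in\mathbb Z$, and the semigroup is exponentially stable.
\item \autoref{lem:Arendt-Bu} then shows that for every $T$-periodic $g$ with $g|_{(0,T)}\in\bE_0^\rm$ there is a unique periodic $\bu\in\bE_1^\rm$ with $\bu'+\ALFI_{0,\rm}\bu=g$. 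Moreover $\|\bu\|_{\bE_1^\rm}\le M\|g\|_{\bE_0^\rm}$.
\end{itemize}

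\textbf{Step 2: the nonlinear problem.} I write \eqref{eq:time-per landfast ice model around equil} as $\tbu'+\ALFI_{0,\rm}\tbu=G(\tbu)+f_\per$. Here $G$ collects four groups of terms.
\begin{itemize}
\item The quasilinear remainder $(\ALFI(\bu_*+\tbu)-\ALFI(\bu_*))\tbu$, together with the $P'$-gradient remainder.
\item The convective terms $-(\tbv\cdot\nabla)\tbv$, $-\div(\tbv\th)$ and $-\div(\tbv\tA)$. The last two have zero average, again by $\tbv|_{\del G}=0$, so $G$ maps into $\bE_0^\rm$.
\item The wind and ocean terms $f_\ra$, $f_\ro$ and $C_{\cor}\bk\times\bv_\ro$.
\item The basal stress $f_\rb$.
\end{itemize}
For $\|\tbu\|_{\bE_1^\rm}\le r\le r_0$ I have $\bu_*+\tbu(t)\in W$. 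Using $\bE_1^\rm\hookrightarrow\rBUC([0,T];\rX_\gamma^\rm)\hookrightarrow \rC([0,T];\rC^1(\oG)^4)$ and arguing as in \autoref{lem:nonlin ests} and \cite[Lemma~6.2]{BDHH:22}, I aim for two estimates:
\begin{equation*}
\|G(\tbu)\|_{\bE_0^\rm}\le C(r^2+\delta_1(1+r)), \qquad \|G(\tbu_1)-G(\tbu_2)\|_{\bE_0^\rm}\le C(r+\delta_1)\|\tbu_1-\tbu_2\|_{\bE_1^\rm}.
\end{equation*}
The wind and ocean contributions are controlled through the $\rL^{2p}(0,T;\rL^{2q})$-norms of $\bv_\ra$ and $\bv_\ro$ via H\"older's inequality; $f_\ro$ is Lipschitz in $\bv$ with constant proportional to $|\bv|+|\bv_\ro|$. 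For $f_\rb$ I split into two cases.
\begin{itemize}
\item If $h_*<h_\crit$, I shrink $r_1$ so that $h<h_\crit$ uniformly. Then $f_\rb\equiv0$.
\item If $h_*\ge h_\crit$, the map $(\bv,h,A)\mapsto \frac{-\bv}{|\bv|+v_0}(h-h_\crit)_+\mre^{-\alpha_\rb(1-A)}$ is Lipschitz. This holds because $v_0>0$ and because $(h-h_\crit)_+$ is Lipschitz. Its size and Lipschitz constant are proportional to $k_2<\delta_1$, as in \eqref{eq:diff of basal stress}.
\end{itemize}

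\textbf{Step 3: fixed point.} Let $\Phi(\tbu)$ be the periodic solution of Step 1 with right-hand side $G(\tbu)+f_\per$. The order of choices is as follows. First choose $r_1$ and $\delta_1$ with $MC(r_1+\delta_1)\le\frac12$. For $r<r_1$, choose $\delta_2$ so that $M(C(r^2+\delta_1(1+r))+\delta_2)\le r$; if necessary, shrink $\delta_1$ relative to $r$ as well. Then $\Phi$ is a contraction on $\oB_{\bE_1^\rm}(0,r)$, and Banach's fixed point theorem gives existence and uniqueness in this ball. For $k_{\rt}\equiv1$, the only change is that Step 1 uses $\Delta_{\min}<\Delta_*$.

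I expect the main obstacle to be Step 1. One part is checking the hypotheses of the Arendt--Bu theorem on the average-zero space, including that the $\rL^2$ energy testing transfers to the $\rL^q$-spectrum via $q$-independence. The other part is handling the quasilinear remainder, whose coefficients depend on $\dbeps(\tbv)$ through $\Delta(\dbeps)$. For that remainder I would rely on smoothness of $\dbeps\mapsto \bS\dbeps/\Delta(\dbeps)$, which holds since $\Delta\ge\Delta_{\min}>0$, together with the $\rC^1$ embedding of the trace space.
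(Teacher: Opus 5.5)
Your proposal is correct and is essentially the paper's proof: Arendt--Bu for the linearization at $\bu_*$ on the average-zero space $\rX_0^{\rm}$ (\autoref{lem:max reg & inv of lin op matrix}), combined with the contraction argument that the paper packages as \autoref{lem:quasilin time per result} and feeds with the estimates of \autoref{lem:ests of op matrix per} and \autoref{lem:ests RHS per}. Your deviations are sound refinements of that argument. These are: putting $C_{\cor}\bk\times\tbv$ into the linear operator (its Lipschitz constant is of order $C_{\cor}$, not small, so leaving it in $F_{\per}$ as the paper does does not give $C_F<\delta_1'$); checking $\frac{2\pi\mri}{T}\mathbb{Z}\subset\rho(-\ALFI_{0,\rm})$ rather than only $0$; shrinking $r_1$ when $h_*<h_{\crit}$ so that $f_{\rb}\equiv0$; and letting $\delta_1$ depend on $r$. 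The last one is unavoidable, since your $G(0)$ is of size $\delta_1$, and the paper's proof does the same. The only gloss is that the lower-left entries $h_*\div$, $A_*\div$ are not covered by \autoref{prop:bdd Hoo-calculus op matrix landfast ice} itself and need the same relative-boundedness perturbation as in \autoref{lem:max reg & inv of lin op matrix}.
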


A few remarks on the previous theorem are in order now.

\begin{rem}
\begin{enumerate}[(a)]
    \item For the solution $\tbu \in \bE_1^\rm$ from \autoref{thm:time-per sols}, it holds that $\bu \coloneqq \tbu + \bu_*$, with $\bu_* = (0,h_*,A_*)$ solves the problem \eqref{eq:time-per landfast ice model}.
    \item The discussion around \eqref{eq:r_0}, and the definition of the open set $W$ in \eqref{eq:open set W} reveal that the $h$- and $A$-component of the solution $\bu$ to original problem take values in their physically relevant ranges.
\end{enumerate}
\end{rem}

In the remainder of this section, we show \autoref{thm:time-per sols} by verifying that we are in the scope of \autoref{lem:quasilin time per result}.
This especially involves establishing that the linearization at zero satisfies the properties required by the Arendt-Bu theorem, \autoref{lem:Arendt-Bu}.
In \autoref{ssec:dyn evol to stat landfast ice config}, we will discuss the relation with the numerical simulations as presented in \autoref{fig:Ex2}.

For $\bu_* = (0,h_*,A_*) \in V$, and with $\tbu \in V \coloneqq B_{\rX_\gamma^\rm}(0,r_0)$, the discussion around \eqref{eq:r_0} reveals that $\th(t) + h_* > \kappa$ for all $t \in [0,T]$ by the choice of $r_0$.
In particular, additionally invoking the embedding $\rB_{qp}^{2-\nicefrac{2}{p}}(G) \hookrightarrow \rC^1(\oG)$, we find that the landfast ice operator $\AFID(\tbu + \bu_*)$ as made precise in \eqref{eq:Lq-realization of the fastice op} is in particular well-defined.
Furthermore, we introduce the $\rL_0^q(G)$-realization of the Neumann Laplacian operator.
It is defined by
\begin{equation*}
    \Delta_{\rN,\rm} f \coloneqq \Delta f, \tfor f \in \rD(\Delta_{\rN,\rm}) = \rW_{\rN}^{2,q}(G) \cap \rL_0^q(G).
\end{equation*}
The operator matrix $\ALFI_\per \colon V \to \cL(\rX_1^\rm,\rX_0^\rm)$ associated to the present problem is then given by
\begin{equation*}
    \ALFI_\per(\tbu) \coloneqq \begin{pmatrix}
        -\AFID(\tbu + \bu_*) & \frac{\del_h P'(\th+h_*,\tA+A_*)}{2 \rho(\th + h_*)} \nabla & \frac{\del_A P'(\th+h_*,\tA+A_*)}{2 \rho(\th + h_*)} \nabla\\
        h_* \div & -d_\rh \Delta_{\rN,\rm} & 0\\
        A_* \div & 0 & -d_\rA \Delta_{\rN,\rm}
    \end{pmatrix}.
\end{equation*}
We also make precise the right-hand side $F_\per \colon \bR \times V \to \rX_0^\rm$ capturing the terms of lower order in \eqref{eq:time-per landfast ice model around equil}.
For $f_\per = (f_\ice,f_\rh,f_\rA)^\top$, it takes the form
\begin{equation*}
    F_\per(t,\tbu) \coloneqq \begin{pmatrix}
        -(\tbv \cdot \nabla) \tbv - C_{\cor} \bk \times (\tbv - \bv_\ro) + \frac{1}{\rho (\th + h_*)}\bigl(f_\ra + f_\ro(\tbv) + f_\rb(\tbv,\th + h_*,\tA + A_*)\bigr)\\
        -\div(\tbv \th)\\
        -\div(\tbv \tA)
    \end{pmatrix} + f_\per(t).
\end{equation*}
The above pieces of notation allow us to reformulate the time-periodic problem \eqref{eq:time-per landfast ice model around equil} as a quasilinear evolution equation
\begin{equation*}
    \left\{
    \begin{aligned}
        \tbu'(t) + \ALFI_\per(\tbu(t)) \tbu(t)
        &= F_\per(t,\tbu(t)), &&\tfor t \in \bR,\\
        \tbu(t)
        &= \tbu(t+T), &&\tfor t \in \bR.
    \end{aligned}
    \right.
\end{equation*}

For the analysis of the time-periodic problem, we first investigate the linearization of the operator matrix $\ALFI_\per(\tbu)$ at zero.
In fact, invoking the pieces of notation $P_{h,*}'$ and $P_{A,*}'$ from \eqref{eq:shorthand not P}, we define
\begin{equation}\label{lin:op matrix time-per}
    \ALFI_{\per,0} \coloneqq \ALFI_\per(0) = \begin{pmatrix}
        -\AFID(\bu_*) & P_{h,*}' \nabla & P_{A,*}' \nabla\\
        h_* \div & -d_\rh \Delta_{\rN,\rm} & 0\\
        A_* \div & 0 & -d_\rA \Delta_{\rN,\rm}
    \end{pmatrix}.
\end{equation}
Note that $\ALFI_{\per,0}$ coincides with the total linearization $\ALFI_0$ from \eqref{eq:tot lin} when the latter is restricted to the closed subspace $\rX_0^\rm \subset \rX_0$.
We capitalize on this fact in the analysis of $\ALFI_{\per,0}$, see the lemma below.

\begin{lem}\label{lem:max reg & inv of lin op matrix}
Let $\bu_* = (0,h_*,A_*)$, where $h_* > \kappa$ and $A_* \in (0,1]$ are constant.
\begin{enumerate}[(i)]
    \item If $k_{\rt} \in [0,1)$, then the operator matrix $\ALFI_{\per,0}$ from \eqref{lin:op matrix time-per} has maximal $\rL^p$-regularity on $\rX_0^\rm$, and it holds that $0 \in \rho(\ALFI_{\per,0})$.
    \item In the case $k_{\rt} \equiv 1$, there exists $\Delta_* > 0$ such that if $\Delta_{\min} < \Delta_*$, the operator matrix $\ALFI_{\per,0}$ has maximal $\rL^p$-regularity on $\rX_0^\rm$, and $0 \in \rho(\ALFI_{\per,0})$.
\end{enumerate}
\end{lem}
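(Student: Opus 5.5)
The plan is to prove the two properties of $\ALFI_{\per,0}$ separately: first maximal $\rL^p$-regularity up to a shift, then invertibility together with the location of the spectrum. Since $\omega$ can be removed once $0$ lies in the resolvent set, the second step closes the argument.

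\textbf{Step 1: shifted maximal regularity on $\rX_0^\rm$.} The spaces $\rX_0^\rm$ and $\rX_1^\rm$ are invariant under $\ALFI_0$ from \eqref{eq:tot lin}. Indeed, for $\bu \in \rX_1$, the divergence theorem with $\bv = 0$ on $\dG$ gives $\int_G h_* \div \bv \srd \bx = 0$, and the Neumann condition gives $\int_G \Delta_\rN h \srd \bx = 0$; the same holds in the $A$-component. So $\ALFI_{\per,0}$ is the part of $\ALFI_0$ in $\rX_0^\rm$.

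Next, split $\ALFI_{\per,0}$ into the diagonal part $\diag(-\AFID(\bu_*),-d_\rh\Delta_{\rN,\rm},-d_\rA\Delta_{\rN,\rm})$ and the first-order off-diagonal couplings. The diagonal part has a bounded $\cH^\infty$-calculus after a shift:
\begin{itemize}
    \item for the velocity block this is \autoref{prop:Hinfty fast ice op}, using $\bu_* \in \rC^{1,\alpha}$;
    \item for the Neumann Laplacian on $\rL_0^q(G)$, the part of $-\Delta_\rN+\omega$ in the invariant, complemented subspace $\rL_0^q(G)$ inherits the $\cH^\infty$-calculus, because the mean-value projection commutes with its resolvents.
\end{itemize}
The couplings $P_{h,*}'\nabla$, $P_{A,*}'\nabla$, $h_*\div$ and $A_*\div$ are relatively bounded with respect to $A_{\diag}^{1/2}$, as in the proof of \autoref{prop:bdd Hoo-calculus op matrix landfast ice}. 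One needs $\rD(A_{\diag}^{1/2}) \simeq \rW_0^{1,q}(G)^2\times(\rW^{1,q}\cap\rL_0^q)^2$ on the average-zero spaces, which follows from \cite[Sec.~5]{PSW:18}. Perturbation theory \cite[Prop.~13.1]{KW:04} then gives $\ALFI_{\per,0}+\omega\in\cH^\infty(\rX_0^\rm)$ with angle $<\nicefrac{\pi}{2}$ for $\omega$ large, hence maximal $\rL^p$-regularity of the shifted operator.

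\textbf{Step 2: spectrum and invertibility.} The domain $\rX_1^\rm$ embeds compactly into $\rX_0^\rm$, so $\ALFI_{\per,0}$ has compact resolvent, and its spectrum is $q$-independent and consists of eigenvalues. It therefore suffices to work with $q=2$ and test $\lambda\bu+\ALFI_{\per,0}\bu=0$ for $\bu\in\rX_1^\rm$.
\begin{itemize}
    \item For $k_{\rt}\in[0,1)$, use the weighted test function $(\bv,c_1h,c_2A)$ from \eqref{eq:tested eigenvalue eq}.
    \item For $k_{\rt}\equiv1$, test with $\bu$ itself and use the Young argument from \eqref{eq:tested eigenvalue eq k_T = 1}, with $\Delta_{\min}<\Delta_*$.
\end{itemize}
In both cases this yields $\lambda\in\bR$ and
\[
0\ge \re\lambda\,\|\tbu\|^2+\Tilde C\bigl(\|\bv\|_{\rH^1}^2+\|\nabla h\|_{\rL^2}^2+\|\nabla A\|_{\rL^2}^2\bigr).
\]
On average-zero functions the Poincaré–Wirtinger inequality bounds $\|h\|_{\rL^2}$ and $\|A\|_{\rL^2}$ by the gradient norms. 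Hence the bracket dominates $c\|\bu\|_{\rL^2}^2$, so $\lambda\le -c<0$ for every eigenvalue, i.e. $\sigma(\ALFI_{\per,0})\subset[c,\infty)$ in the sign convention of $\ALFI_{\per,0}$. In particular $0\in\rho(\ALFI_{\per,0})$ and $s(-\ALFI_{\per,0})<0$.

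Maximal regularity on the half-line then follows from Step 1: the semigroup generated by $-\ALFI_{\per,0}$ is analytic and exponentially stable, since its spectral bound is negative and its resolvent is compact. Thus the shift $\omega$ can be dropped, giving maximal $\rL^p$-regularity of $\ALFI_{\per,0}$ itself on $\rX_0^\rm$.

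\textbf{Expected obstacle.} The delicate point is Step 1 on the average-zero spaces: the $\cH^\infty$-calculus and the identification of the fractional power domain with the interpolation space must be checked for the restricted Neumann Laplacian. The plan is to reduce this to the full-space statements via the bounded mean-value projection, which commutes with $\Delta_\rN$.
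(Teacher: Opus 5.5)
Your proposal is correct and essentially follows the paper's argument. Invertibility comes from the same energy testing on the average-zero space as in the proofs of \autoref{lem:spectral props of tot lin} and \autoref{lem:zero semi-simple ev}, and maximal regularity comes from relative boundedness of the first-order couplings followed by removal of the shift. The differences are minor: you absorb all four couplings in one $\cH^\infty$-perturbation step, whereas the paper first treats the upper-triangular part and then adds the divergence block via the Kunstmann--Weis perturbation theorem. Your Poincar\'e--Wirtinger spectral gap also makes explicit why the shift may be dropped, which the paper justifies only by $0 \in \rho(\ALFI_{\per,0})$.

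One small correction: in case~(i) the weighted coupling terms contribute a purely imaginary quantity, so the testing yields only $\re\lambda \le -c$, not $\lambda \in \bR$. Hence $\sigma(\ALFI_{\per,0}) \subset \{z \in \bC : \re z \ge c\}$ rather than $[c,\infty)$, which is all you actually use.
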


\begin{proof}
With regard to the above observation, and as a by-product of the proof of \autoref{lem:zero semi-simple ev}, where we especially invoke the case distinction of $k_{\rt} \in [0,1)$ and $k_{\rt} \equiv 1$, we find that $0 \in \rho(\ALFI_{\per,0})$.
In order to prove the maximal $\rL^p$-regularity, we split the operator as follows
\begin{equation*}
    \ALFI_{\per,0} = \ALFI_{\per,1} + \ALFI_{\per,2} \coloneqq \begin{pmatrix}
        -\AFID(\bu_*) & P_{h,*}' \nabla & P_{A,*}' \nabla\\
        0 & -d_\rh \Delta_{\rN,\rm} & 0\\
        0 & 0 & -d_\rA \Delta_{\rN,\rm}
    \end{pmatrix} + \begin{pmatrix}
        0 & 0 & 0\\
        h_* \div & 0 & 0\\
        A_* \div & 0 & 0
    \end{pmatrix}.
\end{equation*}
Upon invoking that $\Delta_{\rN,\rm}$ admits maximal $\rL^p$-regularity on $\rL_0^q(G)$, see, for example, \cite[Lemma~2.3.20]{Bra:24}, similarly as in \autoref{prop:bdd Hoo-calculus op matrix landfast ice}, we obtain the existence of $\omega_0 \in \bR$ such that for all $\omega > \omega_0$, the shifted operator $\ALFI_{\per,1} + \omega$ has maximal $\rL^p$-regularity on $\rX_0^\rm$.

It remains to handle $\ALFI_{\per,2}$ by means of a perturbation argument.
In fact, for $\nu$ denoting the outer unit normal vector to $\dG$, the divergence theorem together with the homogeneous Dirichlet boundary conditions of $\bv$ first yields that
\begin{equation}\label{eq:avg zero off-diag term}
    \int_G h_* \div \tbv \srd \bx = h_* \int_{\del G} \tbv \cdot \nu \srd S = 0,
\end{equation}
implying that $h_* \div \tbv \in \rL_0^q(G)$, and likewise $A_* \div \tbv \in \rL_0^q(G)$.
For every $\alpha > 0$, we deduce from interpolation and Young's inequality that there exists $C(\alpha) > 0$ such that
\begin{equation*}
    \| h_* \div \tbv \|_{\rL_0^q(G)} \le C \cdot \| \tbv \|_{\rW^{1,q}(G)} \le C(\alpha) \cdot \| \tbu \|_{\rX_0^\rm} + \alpha \cdot \| \tbu \|_{\rX_1^\rm}.
\end{equation*}
In total, it follows that $\ALFI_{\per,2}$ is a relatively $\ALFI_{\per,1}$-bounded perturbation with relative bound zero, so perturbation theory for maximal $\rL^p$-regularity, see, for instance, \cite[Cor.~2]{KW:01}, implies that there exists $\omega_0' \in \bR$ such that for all $\omega > \omega_0'$, the operator $\ALFI_{\per,0} + \omega = \ALFI_{\per,1} + \ALFI_{\per,2} + \omega$ has maximal $\rL^p$-regularity on $\rX_0^\rm$.
Thanks to $0 \in \rho(\ALFI_{\per,0})$, this property even follows without shift, completing the proof.
\end{proof}

Next, we discuss the estimates of the nonlinearities.

\begin{lem}\label{lem:ests of op matrix per}
Consider $p$, $q \in (1,\infty)$ satisfying \eqref{eq:cond p and q glob small data} as well as $\bu_* = (0,h_*,A_*)$ with $h_* > \kappa$ and $A_* \in (0,1]$ constant, and let $r_0 > 0$ as chosen in \eqref{eq:r_0}, yielding that $\bu(t) = \tbu(t) + \bu_* \in W$ for $t \in [0,T]$ for all $\tbu(t) \in V$ or $\tbu \in \oB_{\bE_1^\rm}(0,r_0)$.
Then the following assertions are valid.
\begin{enumerate}[(a)]
    \item The operators $\ALFI_\per \colon V \to \cL(\rX_1^\rm,\rX_0^\rm)$ are closed and linear.
    \item For every $r \in (0,r_0)$, there exists a constant $L(r) > 0$ such that
    \begin{equation*}
        \| (\ALFI_\per(\tbu_1) - \ALFI_\per(\tbu_2))\tbu \|_{\bE_0^\rm} \le L(r) \cdot \| \tbu_1 - \tbu_2 \|_{\bE_1^\rm} \cdot \| \tbu \|_{\bE_1^\rm}
    \end{equation*}
    for all $\tbu_1$, $\tbu_2 \in \oB_{\bE_1^\rm}(0,r)$ and $\tbu \in \bE_1^\rm$.
\end{enumerate}
\end{lem}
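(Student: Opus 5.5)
For part~(a), I will note that for fixed $\tbu \in V$, the embedding $\rX_\gamma^\rm \hookrightarrow \rC^1(\oG)^4$ and the choice of $r_0$ from \eqref{eq:r_0} give $\tbu + \bu_* \in \rC^{1,\alpha}(\oG)^2 \times \rC^\alpha(\oG)^2$ with $\th + h_* > \kappa$. Then $\AFID(\tbu + \bu_*)$ is well defined on $\rW^{2,q}(G)^2 \cap \rW_0^{1,q}(G)^2$. By \autoref{prop:Hinfty fast ice op}, a shift of it is invertible, hence closed. The off-diagonal first-order terms $\frac{\del_h P'}{2\rho(\th+h_*)}\nabla$, $\frac{\del_A P'}{2\rho(\th+h_*)}\nabla$ and $h_*\div$, $A_*\div$ are relatively bounded with relative bound zero, exactly as in \autoref{prop:bdd Hoo-calculus op matrix landfast ice} and \autoref{lem:max reg & inv of lin op matrix}. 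The $\div$-terms map into $\rL_0^q(G)$ by \eqref{eq:avg zero off-diag term}. Since $\Delta_{\rN,\rm}$ is closed on $\rL_0^q(G)$, the operator matrix with domain $\rX_1^\rm$ is a closed linear operator, and it lies in $\cL(\rX_1^\rm,\rX_0^\rm)$. Linearity is immediate.

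For part~(b), the second and third rows of $\ALFI_\per(\tbu_1) - \ALFI_\per(\tbu_2)$ vanish, because those rows do not depend on $\tbu$. So it suffices to estimate, pointwise in $t$, the difference of the first rows:
\begin{equation*}
    \bigl(\AFID(\tbu_1 + \bu_*) - \AFID(\tbu_2 + \bu_*)\bigr)\tbv \quad\text{and}\quad \Bigl(\tfrac{\del_h P'(\th_1+h_*,\tA_1+A_*)}{2\rho(\th_1+h_*)} - \tfrac{\del_h P'(\th_2+h_*,\tA_2+A_*)}{2\rho(\th_2+h_*)}\Bigr)\nabla \th,
\end{equation*}
together with the analogous $A$-term. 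The coefficients $a_{ij}^{kl}(\dbeps,\tP)/h$ from \eqref{eq:coeffs principal part fastice op} and the lower-order coefficient $\frac{\del_j\tP}{2\rho h\Delta(\dbeps)}(\bS\,\cdot)$ are smooth functions of $(\nabla\tbv,\th,\tA,\nabla\th,\nabla\tA)$ on the relevant compact range, because $\Delta(\dbeps) \ge \Delta_{\min} > 0$ and $h > \kappa$. The mean value theorem then gives the pointwise bound
\begin{equation*}
    \| (\ALFI_\per(\tbu_1(t)) - \ALFI_\per(\tbu_2(t)))\tbu(t) \|_{\rX_0^\rm} \le C(r) \| \tbu_1(t) - \tbu_2(t) \|_{\rC^1(\oG)} \| \tbu(t) \|_{\rX_1^\rm}.
\end{equation*}
This is the argument of \autoref{lem:nonlin ests} and \cite[Lemma~6.2]{BDHH:22}, where $C(r)$ uses the uniform $\rC^1$-bound of $\tbu_i$ on $\oB_{\bE_1^\rm}(0,r)$.

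Next I will take the $\rL^p(0,T)$-norm. I will estimate $\sup_t \| \tbu_1(t) - \tbu_2(t) \|_{\rC^1}$ by $C\| \tbu_1 - \tbu_2 \|_{\bE_1^\rm}$ using $\rX_\gamma^\rm \hookrightarrow \rC^1(\oG)^4$ and $\bE_1^\rm \hookrightarrow \rBUC([0,T];\rX_\gamma^\rm)$. The remaining factor satisfies $\| \tbu \|_{\rL^p(0,T;\rX_1^\rm)} \le \| \tbu \|_{\bE_1^\rm}$, which yields the claimed bound with $L(r) = C\,C(r)$. The main obstacle is the pointwise Lipschitz step for the fastice coefficients. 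The danger is that $\nabla\th$ appears in the lower-order part, so only $\rC^1$-control of $\th$ is available, and no $\rC^2$-control. This suffices because that term multiplies only $\nabla\tbv$ of the test function, which is bounded in $\rL^q$ by $\|\tbu\|_{\rX_1^\rm}$. I would therefore first check carefully that every coefficient difference involves at most first derivatives of $\tbu_i$.
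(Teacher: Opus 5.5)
Your proposal is correct and follows essentially the same route as the paper. Closedness of $\ALFI_\per(\tbu)$ comes from the same perturbation argument used for $\ALFI_{\per,0}$: the paper phrases it via maximal $\rL^p$-regularity of a shift, and you via relative boundedness with bound zero plus closedness of the diagonal part. The Lipschitz bound comes from a pointwise-in-time estimate by $\| \tbu_1(t) - \tbu_2(t) \|_{\rX_\gamma^\rm} \| \tbu(t) \|_{\rX_1^\rm}$, as in \autoref{lem:nonlin ests}, combined with $\bE_1^\rm \hookrightarrow \rBUC([0,T];\rX_\gamma^\rm)$; your check that the coefficient differences involve only first derivatives of $\tbu_i$, so that $\rX_\gamma^\rm \hookrightarrow \rC^1(\oG)^4$ suffices, is exactly the detail the paper leaves implicit.
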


\begin{proof}
Analogously as in the proof of \autoref{lem:max reg & inv of lin op matrix}, we find that there is $\omega_0 \in \bR$ such that $\ALFI_\per(\tbu) + \omega$ has maximal $\rL^p$-regularity on $\rX_0^\rm$ for all $\omega > \omega_0$ and $\tbu \in V$.

The Lipschitz estimate is similar to the one \autoref{lem:nonlin ests} upon noting that $\tbu(t) + \bu_* \in W$ for all $t \in [0,T]$, i.e., we find the existence of $C_A(r) > 0$ with $\| (\ALFI_\per(\tbu_1) - \ALFI_\per(\tbu_2)) \tbu \|_{\rX_0^\rm} \le C_A(r) \cdot \| \tbu_1 - \tbu_2 \|_{\rX_\gamma^\rm} \cdot \| \tbu \|_{\rX_1^\rm}$.
Together with $\bE_1^\rm \hookrightarrow \rBUC([0,T];\rX_\gamma^\rm)$, it then follows that there exists a constant $L(r) > 0$ such that
\begin{equation*}
    \| (\ALFI_\per(\tbu_1) - \ALFI_\per(\tbu_2))\tbu \|_{\bE_0^\rm} \le C_A(r) \cdot \| \tbu_1 - \tbu_2 \|_{\rBUC([0,T];\rX_\gamma^\rm)} \cdot \| \tbu \|_{\bE_1^\rm} \le L(r) \cdot \| \tbu_1 - \tbu_2 \|_{\bE_1^\rm} \cdot \| \tbu \|_{\bE_1^\rm}. \qedhere
\end{equation*}
\end{proof}

It remains to show estimates for the right-hand side $F_\per$.
At this point, we invoke the assumptions on the data from \autoref{ass:atm & ocn vels}.
The nonlinear estimates of $F_\per$ are addressed in the next lemma.

\begin{lem}\label{lem:ests RHS per}
Let $p$, $q \in (1,\infty)$ be such that \eqref{eq:cond p and q glob small data} is valid, consider $\bu_* = (0,h_*,A_*)$, where $h_* > \kappa$ and $A_* \in (0,1]$, and let $r_0 > 0$ as above such that $\bu(t) = \tbu(t) + \bu_* \in W$ for all $t \in [0,T]$, and for all $\tbu \in \oB_{\bE_1^\rm}(0,r_0)$.
Besides, suppose that the surface wind and ocean velocities $\bv_\ra$ and $\bv_\ro$ satisfy \autoref{ass:atm & ocn vels}, and $\left.f_\per\right|_{(0,T)} \in \bE_0^\rm$.
Then the following is valid.
\begin{enumerate}[(a)]
    \item For all $\tbu \in \oB_{\bE_1^\rm}(0,r_0)$, it holds that $F_\per(\tbu) \in \bE_0^\rm$.
    \item For every $r \in (0,r_0)$ and $\delta_1 > 0$, there exists a constant $C_F(r,\delta_1) > 0$ such that
    \begin{equation*}
        \| F_\per(\tbu_1) - F_\per(\tbu_2) \|_{\bE_0^\rm} \le C_F(r,\delta_1) \cdot \| \tbu_1 - \tbu_2 \|_{\bE_1^\rm}.
    \end{equation*}
\end{enumerate}
\end{lem}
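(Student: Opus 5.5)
The plan is to estimate $F_\per$ term by term in $\bE_0^\rm = \rL^p(0,T;\rX_0^\rm)$. The main tools are the embedding $\bE_1^\rm \hookrightarrow \rBUC([0,T];\rX_\gamma^\rm)$ and $\rX_\gamma^\rm \hookrightarrow \rC^1(\oG)^4$, valid under \eqref{eq:cond p and q glob small data}. Together they give $\sup_t \| \tbu(t) \|_{\rC^1(\oG)} \le C \| \tbu \|_{\bE_1^\rm} \le C r$ for $\tbu \in \oB_{\bE_1^\rm}(0,r)$. By the choice of $r_0$ in \eqref{eq:r_0}, we also have $\th(t) + h_* > \kappa$, so $\frac{1}{\rho(\th + h_*)}$ is bounded by $\frac{1}{\rho\kappa}$ and Lipschitz in $\th$ with constant $\frac{1}{\rho\kappa^2}$.

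For (a), I will first check that $F_\per(t,\tbu(t))$ takes values in $\rX_0^\rm$, i.e., that its $h$- and $A$-components have zero average. Since $\tbv = 0$ on $\dG$, the divergence theorem gives $\int_G \div(\tbv \th) \srd \bx = 0$, as in \eqref{eq:avg zero off-diag term}, and likewise for $\div(\tbv\tA)$. The last two components of $f_\per$ lie in $\rL_0^q(G)$ by assumption. Membership in $\rL^p(0,T;\rL^q)$ is then obtained term by term:
\begin{itemize}
    \item The convective and transport terms satisfy $\|(\tbv\cdot\nabla)\tbv\|_{\rL^q} \le \|\tbv\|_{\rL^\infty}\|\nabla\tbv\|_{\rL^q}$, which is bounded uniformly in time. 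The same bound applies to $\div(\tbv\th) = \th\div\tbv + \tbv\cdot\nabla\th$.
    \item The Coriolis term is linear, and $\bv_\ro \in \rL^{2p}(0,T;\rL^{2q}) \subset \rL^p(0,T;\rL^q)$ because $G$ and $(0,T)$ are bounded.
    \item The drag $f_\ra = C_\ra\rho_\ra|\bv_\ra|\bv_\ra$ is quadratic, and Hölder's inequality gives $\|f_\ra\|_{\rL^p(\rL^q)} \le C\|\bv_\ra\|_{\rL^{2p}(\rL^{2q})}^2 \le C\delta_1^2$.
    \item For $f_\ro(\tbv) = C_\ro\rho_\ro|\tbv - \bv_\ro|(\tbv - \bv_\ro)$, the same argument applies, using that $\tbv$ is bounded in $\rL^\infty$.
    \item The basal stress is handled by \eqref{eq:est of basal stress}. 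Its factor $\frac{-\tbv}{|\tbv| + v_0}$ is bounded by one, and $h \le h_* + Cr$.
\end{itemize}

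For (b), I will take differences. The quadratic terms are handled via $ab - a'b' = (a-a')b + a'(b-b')$ in $\rL^\infty\times\rL^q$, which yields a constant of size $Cr$. For the ocean drag, the map $w\mapsto |w|w$ is locally Lipschitz with $||w_1|w_1 - |w_2|w_2| \le (|w_1|+|w_2|)|w_1-w_2|$. Applying Hölder's inequality in space and time (with $\bv_\ro$ in $\rL^{2p}(\rL^{2q})$ and $\tbv_i$ in $\rL^\infty$), I get a bound by $C(r + \delta_1)\sup_t\|\tbv_1 - \tbv_2\|_{\rL^\infty}$. The term $f_\ra$ does not depend on $\tbu$; it enters only through the factor $\frac{1}{\rho(\th+h_*)}$, whose difference is bounded by $C\|\th_1-\th_2\|_{\rL^\infty}$ and then multiplied by $\|f_\ra\|_{\rL^p(\rL^q)}$. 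The contributions of $f_\per$ and of $\bk \times \bv_\ro$ cancel, and the Coriolis term in $\tbv$ is linear. The basal stress difference follows \eqref{eq:diff of basal stress}.

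I expect the main obstacle to be the basal stress, because of the cutoff at $h = h_\crit$. The map $h\mapsto (h-h_\crit)_+$ is globally Lipschitz with constant one, so the case distinction in \eqref{eq:basal stress} causes no problem. The map $\bv\mapsto \frac{\bv}{|\bv|+v_0}$ is Lipschitz with constant $\frac{2}{v_0}$, and $A\mapsto\exp(-\alpha_\rb(1-A))$ is Lipschitz on bounded sets. Combining these, I obtain
\begin{equation*}
    \|f_\rb(\bu_1) - f_\rb(\bu_2)\|_{\rL^q} \le C k_2 (1 + r)\|\tbu_1 - \tbu_2\|_{\rC(\oG)^4},
\end{equation*}
with a constant that is linear in $k_2$. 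This is harmless here, since at this stage we only need finiteness of $C_F(r,\delta_1)$; $k_2<\delta_1$ only keeps it small when $h_*\ge h_\crit$, and if $h_* < h_\crit$ and $r$ is small then $f_\rb \equiv 0$. Collecting all terms and using the $\rBUC$ embedding once more yields the estimate of (b).
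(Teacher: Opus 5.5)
Your proposal is correct and follows essentially the same route as the paper. You estimate term by term in $\rL^p(0,T;\rL^q)$, using:
\begin{itemize}
\item the embeddings $\bE_1^{\rm} \hookrightarrow \rBUC([0,T];\rX_\gamma^{\rm}) \hookrightarrow \rL^\infty(0,T;\rC^1(\oG)^4)$;
\item the divergence theorem to show that $\div(\tbv\th)$ and $\div(\tbv\tA)$ have average zero;
\item H\"older's inequality for the quadratic drag terms;
\item the splitting of products against the Lipschitz factor $\frac{1}{\rho(\th+h_*)}$.
\end{itemize}
Your explicit handling of the basal stress fills in details that the paper leaves implicit by pointing back to \eqref{eq:diff of basal stress}. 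This includes writing it via $(h-h_\crit)_+$, the $\frac{2}{v_0}$-Lipschitz bound for $\bv \mapsto \frac{\bv}{|\bv|+v_0}$, and the remark that $f_\rb \equiv 0$ near $\bu_*$ when $h_* < h_\crit$.
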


\begin{proof}
Let us start by estimating the bilinear part $F_{\per,1}$ of $F_\per$, i.e.,
\begin{equation*}
    F_{\per,1}(\tbu) = \begin{pmatrix}
        -(\tbv \cdot \nabla) \tbv\\
        -\div(\tbv \th)\\
        -\div(\tbv \tA)
    \end{pmatrix}.
\end{equation*}
First, H\"older's inequality and the embedding $\rB_{qp}^{2-\nicefrac{2}{p}}(G) \hookrightarrow \rW^{1,q}(G) \hookrightarrow \rL^\infty(G)$ yield that
\begin{equation}\label{eq:est convective term}
    \| -(\tbv \cdot \nabla) \tbv \|_{\rL^q(G)} \le \| \tbv \|_{\rL^\infty(G)} \cdot \| \nabla \tbv \|_{\rL^q(G)} \le C \cdot \| \tbv \|_{\rW^{1,q}(G)}^2 \le C \cdot \| \tbu \|_{\rX_\gamma^\rm}^2.
\end{equation}
Similarly as in \eqref{eq:avg zero off-diag term}, we find that $\div(\tbv \th)$, $\div(\tbv \tA) \in \rL_0^q(G)$.
Analogous arguments as in \eqref{eq:est convective term} lead to
\begin{equation*}
    \| \div(\tbv \th) \|_{\rL_0^q(G)} \le C \cdot \| \bu \|_{\rX_\gamma^\rm}^2 \tand \| \div(\tbv \tA) \|_{\rL_0^q(G)} \le C \cdot \| \tbu \|_{\rX_\gamma^\rm}^2.
\end{equation*}
In particular, thanks to the embedding $\bE_1^\rm \hookrightarrow \rBUC([0,T];\rX_\gamma^\rm)$, we find that $\| F_{\per,1}(\tbu) \|_{\bE_0^\rm} \le C \cdot \| \tbu \|_{\bE_1^\rm}^2$, so $F_{\per,1}(\tbu) \in \bE_0^\rm$ for all $\tbu \in \oB_{\bE_1^\rm}(0,r_0)$.
Likewise, for $\tbu_1$, $\tbu_2 \in \oB_{\bE_1^\rm}(0,r)$, we infer that
\begin{equation*}
    \| F_{\per,1}(\tbu_1) - F_{\per,1}(\tbu_2) \|_{\bE_0^\rm} \le C \cdot \bigl(\| \tbu_1 \|_{\bE_1^\rm} + \| \tbu_2 \|_{\bE_1^\rm}\bigr) \cdot \| \tbu_1 - \tbu_2 \|_{\bE_1^\rm} \le C \cdot r \cdot \| \tbu_1 - \tbu_2 \|_{\bE_1^\rm}.
\end{equation*}

We proceed with the estimates of the remaining part $F_{\per,2}$ given by
\begin{equation*}
    F_{\per,2}(\tbu) = \begin{pmatrix}
        - C_{\cor} \bk \times (\tbv - \bv_\ro) + \frac{1}{\rho (\th + h_*)}\bigl(f_\ra + f_\ro(\tbv) + f_\rb(\tbv,\th + h_*,\tA + A_*)\bigr)\\
        0\\
        0
    \end{pmatrix} + f_\per(t).
\end{equation*}
Note that $f_\per$ is independent of $\tbu$ and already satisfies $\left.f_\per\right|_{(0,T)} \in \bE_0^\rm$ by assumption.
Besides, the choice of $r_0$ implies that $\th(t) + h_* > \kappa$ for all $t \in [0,T]$, so we find that $\frac{1}{\rho(\th + h_*)} \in \rL^\infty(0,T;\rL^\infty(G))$.
On the other hand, from H\"older's inequality, Young's inequality, and \autoref{ass:atm & ocn vels}, it follows that
\begin{equation}\label{eq:ests of f_a & f_o}
    \begin{aligned}
        \| f_\ra \|_{\rL^p(0,T;\rL^q(G))}
        &\le C \cdot \| \bv_\ra \|_{\rL^{2p}(0,T;\rL^{2q}(G))}^2 < C \delta_1^2 \tand\\
        \| f_\ro(\tbv) \|_{\rL^p(0,T;\rL^q(G))} 
        &\le C\bigl(\delta_1^2 + \| \tbv \|_{\rL^{2p}(0,T;\rL^{2q}(G))}^2\bigr).
    \end{aligned}
\end{equation}
Invoking that $\bu(t) = \tbu(t) + \bu_* \in W$ for all $t \in [0,T]$ by the choice of $r_0$, as in \eqref{eq:est of basal stress}, we find that
\begin{equation}\label{eq:est of f_b}
    \| f_\rb(\tbv,\th + h_*,\tA + A_*) \|_{\rL^p(0,T;\rL^q(G))} \le C \cdot \bigl(\kappa + \| \th \|_{\rL^p(0,T;\rL^q(G))}\bigr) \le C \cdot\bigl(1 + \| \tbu \|_{\bE_1^\rm}\bigr).
\end{equation}
In summary, we have established that $F_{\per,2}(\tbu) \in \bE_0^\rm$ for all $\tbu \in \oB_{\bE_1^\rm}(0,r_0)$.

For the Lipschitz estimate of $F_{\per,2}$, we consider $\tbu_1$, $\tbu_2 \in \oB_{\bE_1^\rm}(0,r)$, and we invoke that
\begin{equation*}
    \bE_1^\rm \hookrightarrow \rBUC([0,T];\rX_\gamma^\rm) \hookrightarrow \rL^\infty(0,T;\rL^\infty(G)^4).
\end{equation*}
The mean value theorem and the above observation that $\th_i(t) + h_* > \kappa$ for all $t \in [0,T]$ imply that
\begin{equation}\label{eq:est of diff of fractions}
    \left\|\frac{1}{\rho(\th_1 + h_*)} - \frac{1}{\rho(\th_2 + h_*)}\right\|_{\rL^\infty(0,T;\rL^\infty(G))} \le C \cdot \| \th_1 - \th_2 \|_{\rL^\infty(0,T;\rL^\infty(G))} \le C \cdot \| \tbu_1 - \tbu_2 \|_{\bE_1^\rm}.
\end{equation}
Joint with the estimate \eqref{eq:ests of f_a & f_o}$_1$ of $f_\ra$, this yields that
\begin{equation*}
    \left\|\left(\frac{1}{\rho(\th_1 + h_*)} - \frac{1}{\rho(\th_2 + h_*)}\right)f_\ra\right\|_{\rL^p(0,T;\rL^q(G))} \le C \delta_1^2 \cdot \| \tbu_1 - \tbu_2 \|_{\bE_1^\rm}.
\end{equation*}
Regarding the ocean force, we first split the term as follows:
\begin{equation*}
    \begin{aligned}
        &\quad \left\|\frac{1}{\rho(\th_1 + h_*)} f_\ro(\tbv_1) - \frac{1}{\rho(\th_2 + h_*)}f_\ro(\tbv_2)\right\|_{\rL^p(0,T;\rL^q(G))}\\
        &\le \left\|\left(\frac{1}{\rho(\th_1 + h_*)} - \frac{1}{\rho(\th_2 + h_*)}\right)f_\ro(\tbv_1)\right\|_{\rL^p(0,T;\rL^q(G))} + \left\|\frac{1}{\rho(\th_2 + h_*)}(f_\ro(\tbv_1) - f_\ro(\tbv_2))\right\|_{\rL^p(0,T;\rL^q(G))}.
    \end{aligned}
\end{equation*}
Combining \eqref{eq:est of diff of fractions} with the estimate \eqref{eq:ests of f_a & f_o}$_2$ of $f_\ro$, we estimate the first addend by
\begin{equation*}
    \left\|\left(\frac{1}{\rho(\th_1 + h_*)} - \frac{1}{\rho(\th_2 + h_*)}\right)f_\ro(\tbv_1)\right\|_{\rL^p(0,T;\rL^q(G))} \le C(\delta_1^2 + r^2) \cdot \| \tbu_1 - \tbu_2 \|_{\bE_1^\rm}.
\end{equation*}
For the second addend, H\"older's inequality and \autoref{ass:atm & ocn vels} yield that
\begin{equation*}
    \begin{aligned}
        \| f_\ro(\tbv_1) - f_\ro(\tbv_2) \|_{\rL^p(0,T;\rL^q(G))}
        &\le C \cdot \bigl(\| \bv_\ro \|_{\rL^{2p}(0,T;\rL^{2q}(G))} + \| \tbu_1 \|_{\bE_1^\rm} + \| \tbu_2 \|_{\bE_1^\rm}\bigr) \cdot \| \tbu_1 - \tbu_2 \|_{\bE_1^\rm}\\
        &\le C(\delta + r) \cdot \| \tbu_1 - \tbu_2 \|_{\bE_1^\rm}.
    \end{aligned}
\end{equation*}
Concatenating the previous estimates, we find that
\begin{equation*}
    \left\|\frac{1}{\rho(\th_1 + h_*)} f_\ro(\tbv_1) - \frac{1}{\rho(\th_2 + h_*)}f_\ro(\tbv_2)\right\|_{\rL^p(0,T;\rL^q(G))} \le C(\delta_1^2 + r^2 + \delta + r) \cdot \| \tbu_1 - \tbu_2 \|_{\bE_1^\rm}.
\end{equation*}
With regard to the basal stress $f_\rb$, we proceed in a similar way, namely, we first split this term into
\begin{equation*}
    \begin{aligned}
        &\quad \left\|\frac{1}{\rho(\th_1 + h_*)} f_\rb(\tbv_1,\th_1 + h_*,\tA_1 + A_*) - \frac{1}{\rho(\th_2 + h_*)}f_\rb(\tbv_2,\th_2 + h_*,\tA_2 + A_*)\right\|_{\rL^p(0,T;\rL^q(G))}\\
        &\le \left\|\left(\frac{1}{\rho(\th_1 + h_*)} - \frac{1}{\rho(\th_2 + h_*)}\right)f_\rb(\tbv_1,\th_1 + h_*,\tA_1 + A_*)\right\|_{\rL^p(0,T;\rL^q(G))}\\
        &\quad + \left\|\frac{1}{\rho(\th_2 + h_*)}(f_\rb(\tbv_1,\th_1 + h_*,\tA_1 + A_*) - f_\rb(\tbv_2,\th_2 + h_*,\tA_2 + A_*))\right\|_{\rL^p(0,T;\rL^q(G))}.
    \end{aligned}
\end{equation*}
For the first addend, we combine \eqref{eq:est of diff of fractions} with the estimate \eqref{eq:est of f_b} of $f_\rb$ to infer that
\begin{equation*}
    \begin{aligned}
        \left\|\left(\frac{1}{\rho(\th_1 + h_*)} - \frac{1}{\rho(\th_2 + h_*)}\right)f_\rb(\tbv_1,\th_1 + h_*,\tA_1 + A_*)\right\|_{\rL^p(0,T;\rL^q(G))} 
        \le C (1 + r) \cdot \| \tbu_1 - \tbu_2 \|_{\bE_1^\rm}.
    \end{aligned}
\end{equation*}
Concerning the second addend, we proceed similarly as in \eqref{eq:diff of basal stress} to conclude that
\begin{equation*}
    \begin{aligned}
        \| f_\rb(\tbv_1,\th_1 + h_*,\tA_1 + A_*) - f_\rb(\tbv_2,\th_2 + h_*,\tA_2 + A_*) \|_{\rL^p(0,T;\rL^q(G))} \le C(r + 1) \cdot \| \tbu_1 - \tbu_2 \|_{\bE_1^\rm}.
    \end{aligned}
\end{equation*}
Putting together these estimates, we argue that
\begin{equation*}
    \begin{aligned}
        &\quad \left\|\frac{1}{\rho(\th_1 + h_*)} f_\rb(\tbv_1,\th_1 + h_*,\tA_1 + A_*) - \frac{1}{\rho(\th_2 + h_*)}f_\rb(\tbv_2,\th_2 + h_*,\tA_2 + A_*)\right\|_{\rL^p(0,T;\rL^q(G))}\\
        &\le C(r + 1)\| \tbu_1 - \tbu_2 \|_{\bE_1^\rm}.
    \end{aligned}
\end{equation*}
In total, we have proved the estimates as asserted in the lemma.
\end{proof}

We are now in position to prove the main result in this section on the existence of a time-periodic strong solution to the landfast ice model subject to time-periodic forcing terms.

\begin{proof}[Proof of \autoref{thm:time-per sols}]
The proof is based on an application of the abstract result \autoref{lem:quasilin time per result}.
\autoref{lem:max reg & inv of lin op matrix} implies that the linearized operator $\ALFI_{\per,0}$ has maximal $\rL^p$-regularity on $\rX_0^\rm$ and satisfies $0 \in \rho(\ALFI_{\per,0})$, so it lies in the scope of the Arendt-Bu theorem as recalled in \autoref{lem:Arendt-Bu}, see also \autoref{ass:time-per quasilin}(c).
In \autoref{lem:ests of op matrix per}, we showed that $\ALFI_\per$ satisfies \autoref{ass:time-per quasilin}(a), and in \autoref{lem:ests RHS per}, we proved that the right-hand side $F_\per$ fulfills the estimates required by \autoref{ass:time-per quasilin}(b).
In particular, the proof of \autoref{lem:ests RHS per} reveals that for $r > 0$ and $\delta_1 > 0$ sufficiently small, the Lipschitz constant $C_F(r,\delta_1)$ satisfies $C_F < \delta_1'$, where $\delta_1' > 0$ is sufficiently small.

At the same time, we observe that
\begin{equation*}
    F_{\per,2}(0) = \begin{pmatrix}
        C_{\cor} \bk \times \bv_\ro + \frac{1}{\rho h_*}\bigl(f_\ra + f_\ro(0) + f_\rb(0,h_*,A_*)\bigr)\\
        0\\
        0
    \end{pmatrix} + f_\per(t).
\end{equation*}
Thus, choosing $\delta_1 > 0$ smaller, and considering $\delta_2 > 0$ small enough, we find that $\| F_\per(0) \|_{\bE_0^\rm} < \delta_2'$ for $\delta_2' > 0$ small by virtue of \autoref{ass:atm & ocn vels} as well as the assumption on $f_\per$.
The existence of a time-periodic solution on the time period $(0,T)$ then is a consequence of \autoref{ass:time-per quasilin}.
The solution can be extended to the whole real line by employing the periodicity condition.
\end{proof}

\section{Numerical simulations}\label{sec:numerical simulations}

In this section, we examine the influence of tensile strength and basal drag parameterizations on the formation of landfast ice and compare this extended model to the classical viscous–plastic formulation used to describe pack ice.
Moreover, we compare the numerical simulations with the analytical results obtained in the previous sections.
\autoref{sec:Ex1} is concerned with a comparison of the landfast ice model with Hibler's viscous-plastic sea-ice model.
In \autoref{ssec:dyn evol towards equil}, we provide numerical simulations associated with the convergence to equilibrium solutions, while the focus of \autoref{ssec:dyn evol to stat landfast ice config} is the case of constant (and thus time-periodic) wind forcing and its relation to the analytical result.

We consider a square domain $
G = [0,512\,\mathrm{km}] \times [0,512\,\mathrm{km}]$ with zero Dirichlet boundary conditions on~$\partial G $. 
Note that the geometric setup of the domain slightly differs from the one considered in \autoref{sec:well-posedness}, \autoref{sec:glob wp close to equil}, and \autoref{sec:time-per sols}, where a domain with $\rC^2$-boundary is considered.
The case of a domain of the form as $G$, though with periodic boundary conditions, for sea-ice has been considered in \cite[Sec.~5]{BBH:26}.
The domain is initially fully covered by sea-ice with  varying thickness given by 
\begin{equation*}
    A(x,y,0) = 1, \quad h(x,y,0) = 2.5\,\mathrm{m} - \sin\!\left(\frac{x}{512 \,\mathrm{km}}\pi\right).
\end{equation*}
Initially, the sea-ice is at rest, $\bv(x,y,0) = \bm{0} \,\mathrm{m/s}$.
Following \cite{LDBRSF:16}, we select the model parameters
\begin{equation*}
    k_{\rt} = 0.15, 
    \quad
    h_{\crit} = 2.5\,\mathrm{m}, 
    \quad
    k_2 = 5\,\mathrm{N/m^3}, 
    \quad
    \alpha_{\rb} = 20.
\end{equation*}
All other parameters are listed in \autoref{tab:1}. 
The system is driven by a constant wind field of magnitude
\begin{equation*}
    |\bv_a| = 20\,\mathrm{m/s},
\end{equation*}
blowing from left to right across $G$. 
Ocean currents and Coriolis forces are neglected for simplicity.
\begin{figure}
    \centering   \includegraphics[width=0.95\linewidth]{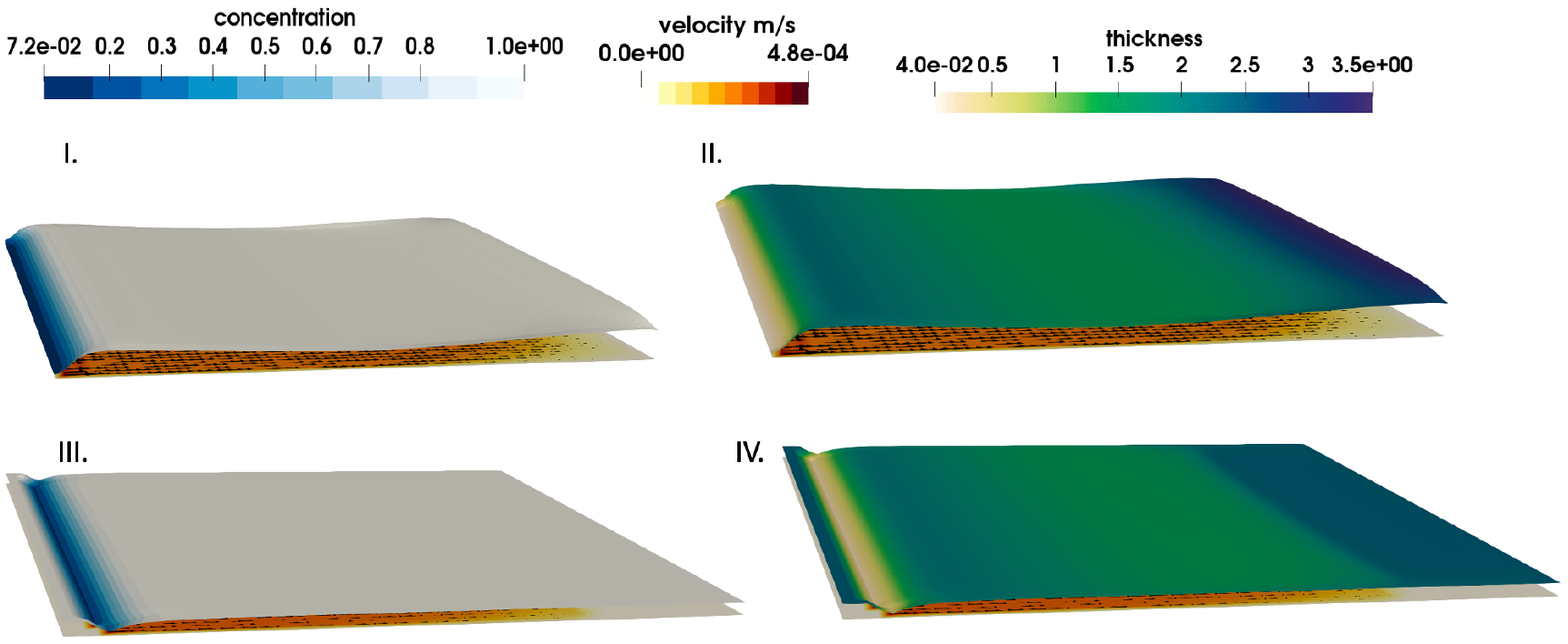}
    \caption{Comparison of sea-ice concentration, thickness and velocity after 2 days of simulation using the standard viscous–plastic model (I, II) and its landfast ice extension (III, IV), both under dominant rightward wind forcing. 
    While in the standard case (I, II), the ice drifts with the wind, the landfast ice extension (III, IV) leads to the formation of stationary sea-ice along the coast and an opening of sea-ice (polynya) to its left.    }
    \label{fig:ex1}
\end{figure}

The domain is discretized using an equidistant quadrilateral mesh with grid size $8\,\mathrm{km}$, and the system is advanced in time with a time step of $\Delta t = 30\,\mathrm{min}$. The sea-ice dynamics are approximated using a time-splitting approach: first, the sea-ice momentum equation~\eqref{eq:mom} is solved, followed by an update of the tracers~\eqref{eq:transport}. The momentum equation is discretized in space using piecewise linear finite elements and in time with an implicit Euler scheme (see \cite{MK:26}). The resulting nonlinear system is solved using a modified Newton–multigrid method \cite{MR:17,MR:17b}. 

The sea-ice concentration and thickness are advected with an upwind scheme, where we set $d_\rh = d_\rA$.
Further implementation details can be found in \cite{MK:26}.

\subsection{A landfast-ice extension of the viscous–plastic sea-ice model}\label{sec:Ex1}
\

In the standard viscous–plastic sea-ice model (without the basal drag term~\eqref{eq:basal stress} and with tensile strength $T=0$ in~\eqref{eq:stress tensor}), the ice is transported in the direction of the wind, leading to a reduction of ice concentration along the left boundary of the domain (see Panel~I in \autoref{fig:ex1}).

In contrast, in the landfast ice configuration, i.e., when the viscous–plastic rheology is augmented by tensile strength~\eqref{eq:stress tensor} and the momentum equation by a basal drag term~\eqref{eq:basal stress}, the ice remains stationary near the left boundary (coast). This behavior results from the grounding condition ($h > h_{\crit}$) and the increased resistance of the ice to tensile stresses.

Consequently, an opening in the ice cover forms in the left part of the domain, where the sea-ice concentration drops to nearly zero (see Panel~III in \autoref{fig:ex1}). Such openings, known as polynyas, typically arise under offshore wind forcing, as considered here. To the right of the polynya, the sea-ice concentration increases again and accumulates due to the imposed Dirichlet boundary conditions.

In the standard viscous–plastic model, the ice thickness is transported with the wind in the same way as the ice concentration (see Panel~II in \autoref{fig:ex1}). As a result, the thickness decreases toward the left boundary and accumulates along the right boundary of the domain. The slight decrease in thickness visible in the center of the domain can be attributed to the sinusoidal variation in the initial condition.

In the landfast ice configuration (Panel~IV in \autoref{fig:ex1}), the ice thickness remains anchored near the left boundary, decreases within the opening (polynya), exhibits variations toward the center due to the imposed initial condition, and accumulates again at the right boundary. Compared to the standard viscous–plastic model, the accumulated thickness at the right boundary is smaller in the landfast ice case (compare Panels~II and~IV in \autoref{fig:ex1}). This behavior can be attributed to the increased stiffness of the ice in the landfast ice configuration.

This behavior of the sea-ice concentration and thickness is also reflected in the sea-ice velocity field: the velocity vanishes in a strip along the left boundary, increases toward the center of the domain, and decreases again toward the right boundary (see Panel~II and Panel~VI in \autoref{fig:ex1}). Owing to the enhanced resistance to tensile stresses and an additional basal drag in the landfast ice formulation, the ice becomes effectively stiffer and accumulates over a broader region along the right boundary compared to the standard viscous–plastic model. This is evident from the wider near-zero velocity region in Panel~IV relative to Panel~II. 

\subsection{Dynamic evolution toward an equilibrium state without forcing}\label{ssec:dyn evol towards equil}
\
 
 \begin{figure}
     \centering
\includegraphics[width=0.9\linewidth]{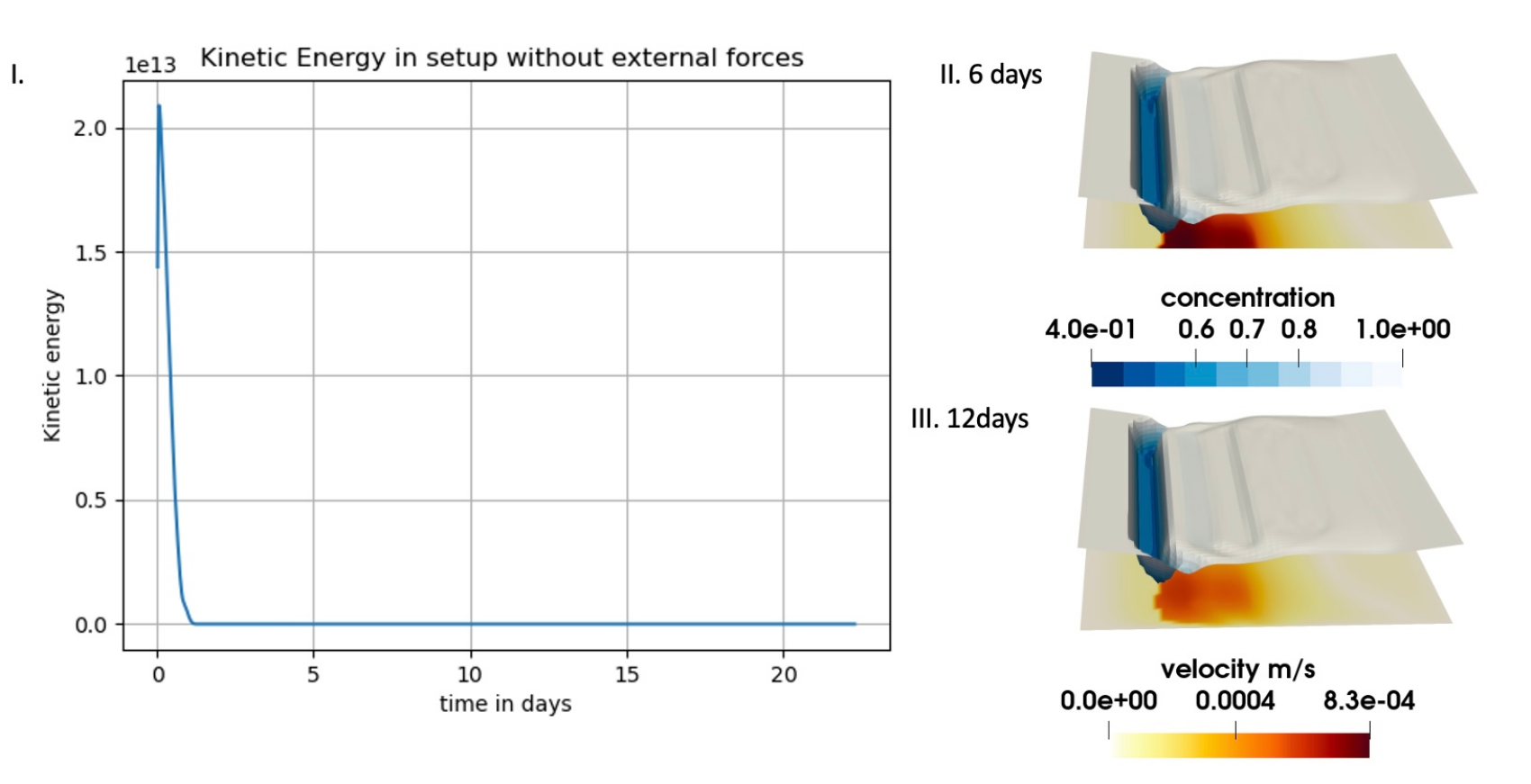}
     \caption{Sea-ice evolution without external forcing. The kinetic energy is presented in Panel I. 
     Panel II and Panel III show the sea-ice concentration after 6 and 12 simulated days respectively. 
     In the absence of external forces the sea-ice velocity approaches zero over time, which results in a stable sea-ice cover over time. }
     \label{fig:ex3}
 \end{figure}
In this section, we study the evolution of landfast ice with initial velocity
\[
\bv(x,y,0)= \sin\!\left(\frac{\pi x}{512\, \mathrm{km}}\right)\sin\!\left(\frac{\pi y}{512\, \mathrm{km}}\right)\cdot 0.05\,\text{m/s}.
\]
Wind and  ocean drag, Coriolis term, and forcing due to sea surface height variations are neglected, i.e., 
\[
f_{\ra} = f_{\ro} = f_{\rc} = f_{\rsh} = 0.
\]
The initial conditions for concentration and thickness are chosen as in the previous example:
\[
A(x,y,0)=1, \quad h(x,y,0)= 2.5\, \mathrm{m} - 0.5 \sin\!\left(\frac{\pi x}{512\, \mathrm{km}}\right).
\]

All remaining parameters are set as in \autoref{sec:Ex1}. To evaluate the results over a time span of $I=[0,22]$ days, we consider the kinetic energy 
\[
    E_\text{kin}(x,y,t)=\int_I\int_G \frac{1}{2}\rho h \bv^2 \srd x \srd y \srd t,
\]
and analyze the fastice concentration at selected time instances.

Since no external forcing is applied, the initial ice velocity (with maximal magnitude of $0.05\,\mathrm{m/s}$) decreases over time and approaches zero. 
This behavior is reflected in the evolution of the kinetic energy, see Panel~I in \autoref{fig:ex1}. In particular, a rapid decay is observed at first two simulated days; after that the sea-ice velocity reaches values of the order of $10^{-4}$  and continues to decrease thereafter, see Panel III and Panel IV in \autoref{fig:ex3}.

As in the previous examples, a characteristic opening in the sea-ice cover forms in the left part of the domain, see Panel II and Panel III in \autoref{fig:ex3}. 
This polynya opening  is  similar to the patterns observed under offshore wind conditions. As indicated by the decay of kinetic energy, the ice concentration distribution changes only marginally after time 2 simulated days, since the ice velocity becomes very small throughout the domain.  The numerical approximation of this stable equilibrium, characterized by a sea-ice velocity close to zero, reflects the existence result of a global solution established in \autoref{thm:glob ex close to equil}.
As in the present numerical setup, no external forcing is assumed in \autoref{thm:glob ex close to equil}.
Note that the initial sea-ice velocity is small, the initial ice concentration is constant, and the initial mean ice thickness is oscillating around a constant, so the situation seems comparable to the assumptions in \autoref{thm:glob ex close to equil}.
The observation that the system approaches a stationary state aligns with the assertion of \autoref{thm:glob ex close to equil} that the solution converges to an equilibrium solution.

\subsection{Dynamic evolution toward a stationary configuration under constant wind forcing}\label{ssec:dyn evol to stat landfast ice config}
\

\begin{figure}
    \centering
    \includegraphics[width=0.9\linewidth]{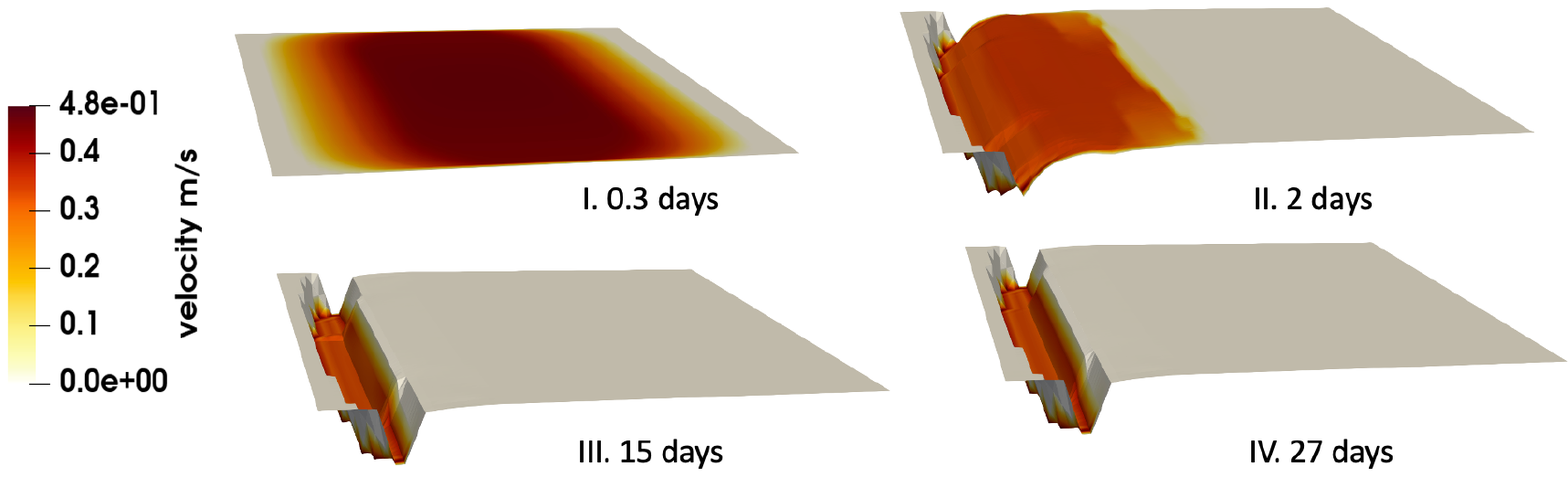}
    \caption{Sea-ice evolution under constant rightward wind forcing over 27 simulated days.
    Panels I–IV show the velocity field scaled by the sea-ice concentration. 
    In Panel~I, the scaling factor is 1, as the entire domain is initially ice-covered. 
    Over time, the system approaches a stationary state, indicated by near-zero velocities in ice-covered regions (gray plateau in Panels III and IV), while non-zero velocities occur only in ice-free areas (opening in Panels III and IV).}
    \label{fig:Ex2}
\end{figure}

We consider the temporal evolution of sea-ice dynamics over 27 simulated days. For illustration purposes, we modify the initial ice thickness to
\[
h(x,y,0) = 2.5 \, \mathrm{m} - 0.5 \sin\!\left(\frac{x}{512 \,\mathrm{km}}\pi\right).
\]
All other settings are identical to those in \autoref{sec:Ex1}. In this example, we analyze the ice dynamics based on the sea-ice velocity field. To relate the velocity to the ice distribution, \autoref{fig:Ex2} shows the sea-ice velocity, $\bv$, scaled by the sea-ice concentration, $A$.

At the initial time, the entire domain is covered by a continuous ice layer, which is reflected by a scaling factor of one throughout the domain (see Panel~I in \autoref{fig:Ex2}). 
Due to the dominant rightward wind forcing, an open-water region (polynya) forms (Panel~II in \autoref{fig:Ex2}), as already observed in the previous example in \autoref{sec:Ex1}. 

As time progresses, the system evolves toward a stationary configuration characterized by vanishing ice velocity to the right of the polynya (Panels~III and~IV).
This state indicates that the basal stress $f_{\rb}$, in combination with internal rheological stresses, has reached an equilibrium, where it effectively counteracts the external wind forcing.

These results provide a numerical verification for the mathematical framework established in \autoref{sec:time-per sols}.
Specifically, since a constant force is a special case of a $T$-periodic force for any $T > 0$, the observed stationary equilibrium represents the unique time-periodic solution predicted by \autoref{thm:time-per sols}.
While the theorem ensures the existence of such a periodic state, the numerical simulation further illustrates its stability:
the system actively converges toward this equilibrium state from its initial configuration.
This provides a direct link between our result on the existence of a time-periodic solution and the physical formation of stable landfast ice.

\medskip

{\bf Acknowledgements.}
This project has received funding from the European Regional Development Fund (grant FEM Power~II, ZS2024/06/18815) under the European Union's Horizon Europe Research and Innovation Program, which is gratefully acknowledged.
Felix Brandt would like to thank the German National Academy of Sciences Leopoldina for support through the Leopoldina Fellowship Program with grant number~LPDS 2024-07. 
Carolin Mehlmann acknowledges funding by the Deutsche Forschungsgemeinschaft (DFG, German Research Foundation) (SPP~1158: project number~463061012).

\appendix

\section{Boundary value problems and quasilinear theory}\label{sec:det theory}

The purpose of this section is to recall several analytical preliminaries for convenience of the reader.
These concepts comprise parabolic boundary value problems, quasilinear existence theory, the generalized principle of linearized stability for the stability analysis of equilibria, and existence theory for time-periodic evolution equations based on the Arendt-Bu theorem.

First, we recall some theory related to elliptic differential operators as well as the implications for the associated $\rL^q$-realizations.
For $x \in \Omega$, where $\Omega \subset \bR^d$ is a bounded domain with sufficiently regular boundary, a Banach space $\rE$ and coefficients $a_\alpha \in \cL(\rE)$, we consider a second order differential operator $\cA(x,\rD)$ with principal part $\cA_{\#}(x,\rD)$ and associated symbol $\cA_{\#}(x,\xi)$ given by
\begin{equation}\label{eq:diff op of second order}
    \cA(x,\rD) = \sum_{|\alpha| \le 2} a_\alpha(x) \rD^\alpha, \enspace \cA_{\#}(x,\rD) = \sum_{|\alpha| = 2} a_\alpha(x) \rD^\alpha \tand \cA_{\#}(x,\xi) = \sum_{|\alpha| = 2} a_\alpha(x) \xi^{\alpha},
\end{equation}
respectively.
In the situation of a differential operator acting on a $\bC^d$-valued function, the principal part and corresponding symbol take the form
\begin{equation}\label{eq:Cd-valued diff op}
    [\cA_{\#}(x,\rD)v(x)]_i = \sum_{j,k,l=1}^d a_{ij}^{kl}(x) \rD_k \rD_l v_j(x) \tand (\cA_{\#}(x,\xi))_{ij} = \sum_{k,l=1}^d a_{ij}^{kl}(x) \xi_k \xi_l.
\end{equation}
The operator $\cA(x,\rD)$ is said to be
\begin{enumerate}[(i)]
    \item \textit{parameter-elliptic} of angle $\phi \in (0,\pi]$ if $\sigma(\cA_{\#}(x,\xi)) \subset \Sigma_\phi$ for all $x \in \oOmega$ and for all $\xi \in \bR^d$ such that $|\xi| = 1$, and the \textit{angle of ellipticity} of $\cA$ is $\phi_{\cA} \coloneqq \inf\left\{\phi \in (0,\pi] : \sigma(\cA_{\#}(x,\xi)) \subset \Sigma_\phi\right\}$, and
    \item \textit{strongly elliptic} if $E$ is a Hilbert space with inner product $(\cdot,\cdot)_{\rE}$, and if there is a constant $c > 0$ so that $\re(\cA_{\#}(x,\xi)w | w)_{\rE} \ge c \cdot \| w \|_{\rE}^2$ for all $x \in \oOmega$, for all $\xi \in \bR^d$ with $|\xi| = 1$, and for all $w \in \rE$.
\end{enumerate}
For $m \in \{0,1\}$, $b_\beta \in \cL(\rE)$ and $x \in \del\Omega$, we consider boundary differential operators $\cB(x,\rD)$ with principal part $\cB_{\#}(x,\rD)$ given by
\begin{equation}\label{eq:boundary diff op}
    \cB(x,\rD) = \sum_{|\beta| \le m} b_\beta(x) \rD^\beta \tand \cB_{\#}(x,\rD) = \sum_{|\beta| = m} b_\beta(x) \rD^\beta.
\end{equation}
In this context, we make precise the so-called \textit{Lopatinskii-Shapiro condition}.

\begin{defn}\label{def:Lopatinskii Shapiro}
Let $\cA(x,\rD)$ as in \eqref{eq:diff op of second order} be a parameter-elliptic differential operator with angle of ellipticity $\phi_{\cA} \in [0,\pi)$, and let $\cB(x,\rD)$ denote a boundary differential operator as in \eqref{eq:boundary diff op}.
Then the \textit{Lopatinskii-Shapiro condition} is satisfied if for every $x_0 \in \dOmega$, the ODE problem in $\bR_+$ given by 
\begin{equation*}
     (\lambda + \cA_{\#}(x_0,\xi',\rD_y)) v(y) = 0, \enspace y > 0, \enspace \cB_{\#}(x_0,\xi',\rD_y)v(0) = g
\end{equation*}
has a unique solution $v \in \rC_0(\bR_+;\rE)$, where the subscript $_0$ indicates that the function is decaying to zero as $x \to \infty$, for all $g \in \rE$, $\lambda \in \overline{\Sigma}_{\phi_{\cA}}$ and $\xi' \in \bR^d$ such that $|\xi'| + |\lambda| \neq 0$.
\end{defn}

In the $\bC^d$-valued situation, there is another type of ellipticity that guarantees the validity of the Lopatinskii-Shapiro condition when considering Dirichlet or Neumann boundary conditions.
This stronger property is called \textit{strong normal ellipticity} and was first introduced by Bothe and Pr\"uss \cite{BP:07}.

\begin{defn}\label{def:strong normal ell}
Let $\cA(x,\rD)$ be a differential operator acting on {$\bC^d$-valued} functions with symbol of the principal part $\cA_{\#}(x,\xi)$ as made precise in \eqref{eq:Cd-valued diff op}.
Then the differential operator $\cA(x,\rD)$ is referred to as \textit{strongly normally elliptic} if $\cA(x,\rD)$ is strongly elliptic, and if it additionally holds that
\begin{equation*}
    \re \sum_{i,j,k,l=1}^d a_{ij}^{kl}(x) (\xi_l u_j - \nu_l v_j) \overline{(\xi_k u_i - \nu_k v_i)} > 0
\end{equation*}
for all $x \in \oOmega$, for all $\xi$, $\nu \in \bR^d$ with $|\xi| = |\nu| = 1$ and $(\xi|\nu) = 0$, and for all $u$, $v \in \bC^d$ with $\im(u|v) \neq 0$.
\end{defn}

For a proof of the lemma below asserting the validity of the Lopatinskii-Shapiro condition for a strongly normally elliptic $\bC^d$-valued differential operator with Dirichlet or Neumann boundary conditions, we refer to \cite[Section~3]{BP:07}, see also the discussion in \cite[Section~6.2.5]{PS:16}.

\begin{lem}\label{lem:strong normal ell implies Lopatinskii-Shapiro}
Let $\cA(x,\rD)$ denote a strongly normally elliptic $\bC^d$-valued operator, and consider Dirichlet or Neumann boundary conditions, meaning that $\cB(x,\rD) = \tr_{\dOmega}$ or $\cB(x,\rD) = \del_\nu$.
Then $(\cA,\cB)$ satisfies the Lopatinskii-Shapiro condition for all $x \in \dOmega$.
\end{lem}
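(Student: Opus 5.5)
The plan is to verify the Lopatinskii--Shapiro condition of \autoref{def:Lopatinskii Shapiro} by freezing the coefficients at $x_0 \in \dOmega$ and reducing everything to an energy identity for the half-line ODE. After a rotation we may take $\nu = -e_d$ to be the inner normal and $\xi' \perp \nu$. Write $\cA_{\#}(x_0,\xi',\rD_y)$ in the form $a_0 + a_1 \rD_y + a_2 \rD_y^2$, where $a_2$, $a_1$, $a_0$ collect the coefficients $a_{ij}^{kl}$ contracted with $\nu\otimes\nu$, $\xi'\otimes\nu$ and $\xi'\otimes\xi'$. For $\lambda$ in the closed sector and $|\xi'| + |\lambda| \neq 0$, uniqueness of decaying solutions with $g = 0$ is the main claim. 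Existence for every $g$ then follows by a dimension count: strong ellipticity makes $a_2$ invertible, so the ODE is a first-order system of dimension $2d$. Parameter-ellipticity rules out characteristic roots on the real axis. A standard homotopy argument in $(\lambda,\xi')$, which keeps the root count fixed since no roots cross the axis, shows that exactly $d$ roots lie in each half plane. Hence the decaying solution space has dimension $d$, and injectivity of the boundary map gives bijectivity for both Dirichlet and Neumann data.

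For uniqueness with $g = 0$, let $v \in \rC_0(\bR_+;\bC^d)$ solve the homogeneous problem; it decays exponentially together with its derivatives. Testing against $\bar v$ and integrating by parts over $\bR_+$ gives
\begin{equation*}
    \lambda \| v \|_{\rL^2}^2 + \int_0^\infty \sum_{i,j,k,l} a_{ij}^{kl}\,(\xi'_l v_j + \mri\nu_l v_j')\overline{(\xi'_k v_i + \mri\nu_k v_i')} \srd y = \text{(boundary term at } y=0).
\end{equation*}
The boundary term vanishes for Dirichlet conditions because $v(0) = 0$. For Neumann conditions it vanishes because the conormal derivative is zero at the boundary.

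The main obstacle is the case $\lambda \in \bC\setminus\bR_+$ with $\xi' \neq 0$. Here the integrand must be shown to take values whose real part controls the sign. Strong ellipticity alone only gives nonnegativity of the real part for real-separable vectors. I would therefore split pointwise $v = u$ and $v' = $ a second vector, so that the integrand is exactly the quadratic form in \autoref{def:strong normal ell} with $u = v(y)$ and the second vector $\mri v'(y)/|\xi'|$, after normalising $|\xi'| = 1$.

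Next I combine the identity with its imaginary part. Note that $\im \int (v|v') \srd y = -\tfrac12 \im |v(0)|^2$-type terms, which vanish under either boundary condition. The plan is then:
\begin{itemize}
\item If $\lambda \notin \overline{\bR_-}$, take real parts after multiplying by a suitable phase. The form is $\ge 0$ and strictly positive wherever $\im(u|v) \neq 0$.
\item Otherwise use the Bothe--Pr\"uss trick: show that $\im(v(y)|v'(y))$ cannot vanish identically unless $v \equiv 0$. If it does vanish identically, $v$ has locally constant phase direction, which contradicts decay combined with the ODE.
\end{itemize}
This step, following \cite[Section~3]{BP:07}, is where I expect the real work to lie. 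In particular it requires ruling out $\lambda = 0$ with $\xi' \neq 0$, which is again handled by strong ellipticity. The case $\xi' = 0$, $\lambda \neq 0$ is immediate from strong ellipticity of $a_2$.
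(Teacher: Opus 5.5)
The paper gives no proof of this lemma; it only cites \cite[Section~3]{BP:07} and \cite[Section~6.2.5]{PS:16}. Your reduction of existence to uniqueness via the root count is fine, but the uniqueness argument has two genuine gaps.

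\textbf{The Neumann case.} The lemma concerns $\cB = \del_\nu$, not the conormal derivative. Write $a^{kl} = (a_{ij}^{kl})_{i,j}$. Integrating by parts in $y$ leaves, up to sign, the boundary term $\mri\bigl(\sum_{k,l}\nu_k a^{kl}(\xi'_l + \nu_l\rD_y)v(0)\,\big|\,v(0)\bigr)$. The condition $\del_y v(0) = 0$ removes only the $\rD_y$-part. What remains is $\mri(Bv(0)|v(0))$ with $B = \sum_{k,l}\nu_k\xi'_l a^{kl}$, whose real part is nonzero in general (already for Lam\'e-type coefficients). Similarly, with your choice $u = v(y)$, $w = \mri v'(y)$, one has $\int_0^\infty \im(u|w)\srd y = \frac12|v(0)|^2$, which does not vanish for Neumann data. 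For the scalar Neumann Laplacian used in the paper the normal and conormal derivatives coincide, but the lemma is stated for systems. The missing idea is to reduce Neumann to Dirichlet. The ODE has constant coefficients and no real characteristic roots, so $z \coloneqq v'$ is again an exponentially decaying solution, now with $z(0) = 0$. Dirichlet uniqueness gives $z \equiv 0$, so $v$ is constant and hence zero.

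\textbf{The Dirichlet step.} Write $Q(y)$ for your integrand. The relevant pointwise quantity is $\im(u|w) = -\re(v|v') = -\frac12\frac{\rd}{\rd y}|v(y)|^2$, not $\im(v|v')$. With it, no ``phase direction'' argument is needed. For $\re\lambda \ge 0$ the identity gives $\re\lambda\,\|v\|_{\rL^2}^2 + \int_0^\infty \re Q\srd y = 0$, with $\re Q \ge 0$ pointwise by density from the strict inequality. Hence $\re Q \equiv 0$, so $\im(u|w) \equiv 0$ by strong normal ellipticity, so $|v|$ is constant and $v \equiv 0$.

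Your extension to $\re\lambda < 0$ by ``multiplying by a suitable phase'' does not work. Strong normal ellipticity controls only $\re Q$, and $\re(e^{\mri\theta}Q)$ has no sign for $\theta \neq 0$ unless the form is Hermitian. Your ``otherwise'' branch $\lambda \in \overline{\bR_-}$ is also misdirected. Negative real $\lambda$ lie in no admissible sector (already for the Laplacian with $\xi' = 0$ there is no decaying solution), and $\lambda = 0$ is already covered by the real-part argument.

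What the energy method actually yields is the Lopatinskii--Shapiro condition for $\re\lambda \ge 0$, $|\lambda| + |\xi'| \neq 0$. By continuity of the Lopatinskii determinant on the compact set $\{|\lambda| + |\xi'| = 1\}$, it persists on $\overline{\Sigma}_{\pi/2+\varepsilon}$ for some $\varepsilon > 0$. That is enough for an angle below $\nicefrac{\pi}{2}$ and hence for maximal regularity. For real coefficients with $a_{ij}^{kl} = a_{ji}^{lk}$, as in the paper, $\int_0^\infty Q\srd y$ is real and positive. A nontrivial solution then forces $\lambda < 0$, so uniqueness holds on all of $\bC \setminus (-\infty,0)$ directly.
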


We now invoke some smoothness and ellipticity conditions.
\begin{enumerate}
    \item[(S)] Let $a_\alpha$ and $b_\beta$ denote the coefficients of the differential operator $\cA$ and the boundary differential operator $\cB$ as introduced in \eqref{eq:diff op of second order} and \eqref{eq:boundary diff op}, respectively.
    We assume that
    \begin{enumerate}[(i)]
        \item $a_\alpha \in \rC^{0,\beta}(\oOmega;\cL(\rE))$ for $|\alpha| = 2$ for some $\beta \in (0,1)$,
        \item $a_\alpha \in \rL^\infty(\Omega;\cL(\rE))$ for $|\alpha| < 2$, and
        \item $b_\beta \in \rC^{2-m}(\dOmega;\cL(\rE))$ for $|\beta| \le m$, where $m \in \{0,1\}$.
    \end{enumerate}
    \item[(E)] We assume the existence of $\phi_{\cA} \in [0,\pi)$ such that
    \begin{enumerate}[(i)]
        \item $\cA$ is parameter-elliptic of angle $\phi_{\cA}$ for all $x \in \oOmega$, and
        \item $(\cA,\cB)$ satisfies the Lopatinskii-Shapiro condition for all $x \in \dOmega$.
    \end{enumerate}
\end{enumerate}

We denote by $A_B$ the \textit{$\rL^q(\Omega;\rE)$-realization of $\cA(x,\rD)$}, so $\rD(A_B) = \left\{u \in \rW^{2,q}(\Omega;\rE) : \cB(x,\rD) u = 0 \right\}$.
The next result, which is due to Denk, Dore, Hieber, Pr\"uss and Venni \cite{DDHPV:04}, establishes the bounded $\cH^\infty$-calculus of $A_B$.
For the corresponding result on the $\cR$-sectoriality, under slightly weaker regularity assumptions on the top-order coefficients, we refer to \cite[Theorem~8.2]{DHP:03}.
The second part of the assertion on the maximal $\rL^p$-regularity is classical, see, e.g., \cite[Ch.~4]{DHP:03}.
We refer here also to the result of Weis~\cite{Wei:01} on the characterization of maximal $\rL^p$-regularity on UMD spaces in terms of the $\cR$-sectoriality.

\begin{lem}[{\cite[Theorem~2.3]{DDHPV:04}}]\label{lem:max reg boundary value problem}
Consider a Hilbert space $\rE$, $d \in \bN$ and $q \in (1,\infty)$, and let $\Omega \subset \bR^d$ be a bounded domain with $\rC^2$-boundary.
Moreover, suppose that the boundary value problem $(\cA,\cB)$ satisfies the smoothness and ellipticity conditions (S) and (E) from above for some $\phi_{\cA} \in [0,\pi)$.

Then for every $\phi > \phi_{\cA}$, there is $\mu_\phi$ such that $A_B + \mu_\phi \in \cH^\infty(\rL^q(\Omega;\rE))$ with $\phi_{A_B + \mu_\phi}^\infty \le \phi_{\cA}$.

In particular, $A_B + \mu_\phi$ is $\cR$-sectorial with $\cR$-angle $\phi_{A_B + \mu_\phi}^\cR \le \phi_{A_B + \mu_\phi}^\infty \le \phi_{\cA}$, and if $\phi_{\cA} < \nicefrac{\pi}{2}$, then $A_B + \mu_\phi$ has the property of maximal regularity in $\rL^p(\bR_+;\rL^q(\Omega;\rE))$ for every $p \in (1,\infty)$.
\end{lem}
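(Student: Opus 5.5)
Since this is a known result, I would follow the classical localization-and-perturbation scheme of \cite{DHP:03, DDHPV:04} rather than build something new. The argument runs in three stages: model problems with constant coefficients, then variable coefficients by localization and perturbation, and finally maximal regularity from the calculus.

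\emph{Whole space.} For the constant-coefficient principal part $\cA_{\#}(x_0,\rD)$ on $\bR^d$, parameter-ellipticity of angle $\phi_{\cA}$ gives the bound $|(\lambda + \cA_{\#}(x_0,\xi))^{-1}| \le C(|\lambda| + |\xi|^2)^{-1}$ for $\lambda \in \Sigma_{\pi - \phi}$ and $\phi > \phi_{\cA}$. For $f \in \Hinfty(\Sigma_{\phi'})$ the operator $f(\cA_{\#}(x_0,\rD))$ is a Fourier multiplier with symbol $f(\cA_{\#}(x_0,\xi))$, written via the Dunford integral. I would verify the Mikhlin condition for this symbol with constants controlled by $\| f \|_\infty$. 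Since $\rE$ is a Hilbert space, $\rL^q(\bR^d;\rE)$ is UMD and the operator-valued Mikhlin theorem applies, so the whole-space calculus is bounded.

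\emph{Half space.} For $(\cA_{\#}(x_0,\rD), \cB_{\#}(x_0,\rD))$ on $\bR^d_+$ I would reduce, by extending $f$ to $\bR^d$ and subtracting the whole-space solution, to the problem with homogeneous interior equation and boundary datum $g$. The Lopatinskii--Shapiro condition (E)(ii) makes the ODE in the normal variable uniquely solvable for every $(\lambda,\xi') \ne 0$. The solution operator then has the form $\int_0^\infty$ of a kernel that is homogeneous in $(\lambda^{1/2},\xi')$. I would show that this correction term is a sum of operators with operator-valued kernels of Poisson type, whose bounded $\Hinfty$-calculus follows from the joint functional calculus of $\rD_y$ and $(\lambda + |\rD'|^2)^{1/2}$ established in \cite{DDHPV:04}. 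This step is the \textbf{main obstacle}. Resolvent estimates alone (giving $\cR$-sectoriality, \cite[Thm.~8.2]{DHP:03}) do not suffice; one needs $\Hinfty$-bounds uniform in $f$. I would first try to represent $f(A)$ through the resolvent integral and estimate the kernel of the boundary correction pointwise by $C\| f \|_\infty \, |K(y,y')|$ with $K$ a kernel of Hardy type in the normal variable.

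\emph{Variable coefficients and bounded domain.} Next I would cover $\oOmega$ by small balls and flatten the boundary using the $\rC^2$-charts. The top-order coefficients are $\rC^{0,\beta}$ by (S)(i) and the $b_\beta$ are $\rC^{2-m}$, so in each chart the operator is a small perturbation of a frozen-coefficient model: the difference is small in $\cL(\rW^{2,q},\rL^q)$. Lower-order terms from (S)(ii) and the cutoff commutators are of relative bound zero. I would glue the local model resolvents with a partition of unity into a parametrix, invert $\lambda + A_B$ for large $|\lambda|$ via a Neumann series, and obtain the $\Hinfty$-bound for $A_B + \mu_\phi$ with angle $\le \phi_{\cA}$. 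For this I would use the perturbation theorem for the $\Hinfty$-calculus \cite[Prop.~13.1]{KW:04}, applied in its localized form with lower-order relative bounds.

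\emph{Maximal regularity.} Finally, a bounded $\Hinfty$-calculus on the UMD space $\rL^q(\Omega;\rE)$ implies $\cR$-sectoriality with $\phi^\cR \le \phi^\infty$ by Kalton--Weis. If $\phi_{\cA} < \nicefrac{\pi}{2}$, Weis' theorem \cite{Wei:01} then yields maximal $\rL^p$-regularity for every $p \in (1,\infty)$.
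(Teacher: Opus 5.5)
The paper gives no proof of this lemma. It quotes \cite[Theorem~2.3]{DDHPV:04} for the calculus and points to \cite[Ch.~4]{DHP:03} and \cite{Wei:01} for the consequences, so your outline is a reconstruction of the cited argument. Your whole-space step is fine: Mikhlin applies, and the $\cR$-bounds reduce to norm bounds because $\rE$ is a Hilbert space. Your final step (Kalton--Weis, then Weis) is also fine. The half-space step is essentially deferred to \cite{DDHPV:04} itself.

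The genuine gap is the passage to variable coefficients. You use (S)(i) only to make the freezing error $B=\sum_{|\alpha|=2}(a_\alpha-a_\alpha(x_0))\rD^\alpha$ small in $\cL(\rW^{2,q},\rL^q)$, for which uniform continuity would already suffice. You then invert $\lambda+A_B$ by a Neumann series. That yields resolvent and $\cR$-bounds, i.e.\ exactly the $\cR$-sectoriality of \cite[Theorem~8.2]{DHP:03}; this already gives the maximal regularity part via \cite{Wei:01}, but it does not give a bounded calculus. The parametrix remainder is in general only of size $\eps|\lambda|^{-1}$ for large $|\lambda|$. Its contribution to $f(A_B)=\frac{1}{2\pi\mri}\int_\Gamma f(\lambda)(\lambda-A_B)^{-1}\,\rd\lambda$ is therefore not absolutely convergent (logarithmic divergence at infinity) and is not controlled by $\|f\|_\infty$.

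The lower-order result \cite[Prop.~13.1]{KW:04} cannot absorb $B$. It needs relative boundedness with respect to $A^\theta$ with $\theta<1$, which covers lower-order terms and cut-off commutators, whereas $B$ is of top order. In general, a same-order perturbation with small relative bound need not preserve a bounded $\cH^\infty$-calculus at all.

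The missing idea is to exploit $\beta>0$. Multiplication by $\rC^{0,\beta}$-functions is bounded on $\rH^{s,q}$ for $|s|<\beta$. Hence $B$ is additionally bounded from $\rD(A_{x_0}^{1+\delta})$ to $\rD(A_{x_0}^{\delta})$ for small $\delta>0$, where $A_{x_0}$ is the frozen model. A perturbation theorem of Kalton--Kunstmann--Weis type, combining this mapping property with smallness in $\cL(\rD(A_{x_0}),\rL^q)$, then yields the calculus. This is why (S)(i) requires Hölder rather than merely continuous top-order coefficients.

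A smaller point concerns the angle. Your argument, like \cite{DDHPV:04}, only gives $\phi_{A_B+\mu_\phi}^\infty\le\phi$, not $\le\phi_{\cA}$ as written in the statement, and the latter is false in general. Take $\cA=-\Delta+\kappa(-x_2\del_1+x_1\del_2)$ with $\kappa\neq0$ on the unit disc with Dirichlet conditions. Then (S) and (E) hold with $\phi_{\cA}=0$. However, $J_1(j_{1,1}r)\mre^{\mri\theta}$ is an eigenfunction with eigenvalue $j_{1,1}^2+\mri\kappa$, so $A_B+\mu$ has positive $\cH^\infty$-angle for every $\mu\ge0$. This does not affect the maximal regularity conclusion, for which one simply picks $\phi\in(\phi_{\cA},\nicefrac{\pi}{2})$.
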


Let us now recall some quasilinear theory.
We follow here \cite[Ch.~5]{PS:16}.
More precisely, we consider Banach spaces $\rX_0$ and $\rX_1$ such that $\rX_1 \hookrightarrow \rX_0$, and an open subset $V_\mu \subset \rX_{\gamma,\mu}$ of the interpolation space
\begin{equation*}
    \rX_{\gamma,\mu} \coloneqq (\rX_0,\rX_1)_{\mu-\nicefrac{1}{p},p}, \tfor \mu \in (\nicefrac{1}{p},1].
\end{equation*}
For $(A,F) \colon V_\mu \to \cL(\rX_1,\rX_0) \times \rX_0$ and $u_0 \in V_\mu$, we study the evolution equation 
\begin{equation}\label{eq:quasilin ACP}
    u'(t) + A(u) u = F(u), \tfor t > 0, \enspace u(0) = u_0.
\end{equation}

The local strong well-posedness result for \eqref{eq:quasilin ACP} now reads as follows.
The assertion of~(a) can be found in \cite[Thm.~5.1.1]{PS:16}, while the assertion of~(b) is in \cite[Cor.~5.1.2]{PS:16}.
We refer here also to \cite{LPW:14}.
For the underlying concept of maximal $\rL^p$-regularity in time-weighted spaces, we refer to the article by Pr\"uss and Simonett \cite{PS:04}.
Time-weighted Lebesgue and Sobolev spaces have been defined in \autoref{sec:well-posedness}.

\begin{lem}\label{lem:loc wp quasilin ACP}
Let $p \in (1,\infty)$ and $u_0 \in V_\mu$, and assume $(A,F) \in \rC^{0,1}(V_\mu;\cL(\rX_1,\rX_0) \times \rX_0)$ for some $\mu \in (\nicefrac{1}{p},1]$.
Moreover, suppose that $A(u_0)$ has maximal $\rL^p$-regularity on $\rX_0$.

\begin{enumerate}[(a)]
    \item Then there are $T' = T'(u_0) > 0$ and $r = r(u_0) > 0$ with $\oB_{\rX_{\gamma,\mu}}(u_0,r) \subset V_\mu$ such that for every initial value $u_1 \in \oB_{\rX_{\gamma,\mu}}(u_0,r)$, the abstract Cauchy problem \eqref{eq:quasilin ACP} admits a unique solution
    \begin{equation*}
        u(\cdot,u_1) \in \bE_{1,\mu}(0,T') \coloneqq \rW_\mu^{1,p}(0,T';\rX_0) \cap \rL_\mu^p(0,T';\rX_1) \cap \rC([0,T'];V_\mu)
    \end{equation*}
    on $(0,T')$, and there is a constant $c = c(u_0) > 0$ so that for all $u_1$, $u_2 \in \oB_{\rX_{\gamma,\mu}}(u_0,r)$, we have
    \begin{equation*}
        \| u(\cdot,u_1) - u(\cdot,u_2) \|_{\bE_{1,\mu}(0,T')} \le c \cdot \| u_1 - u_2 \|_{\rX_{\gamma,\mu}}.
    \end{equation*}
    In addition, for every $\tau \in (0,T')$, it holds that $u \in \bE_1(\tau,T') \coloneqq \bE_{1,1}(\tau,T') \hookrightarrow \rC([\tau,T'];\rX_1)$, so the solution regularizes instantaneously.
    \item If $A(v)$ has maximal $\rL^p$-regularity on $\rX_0$ for all $v \in V_\mu$, then the solution $u(t)$ of the abstract Cauchy problem \eqref{eq:quasilin ACP} has a maximal time interval of existence $J(u_0) = [0,t_+(u_0))$ that is characterized by the following alternatives: 
    \newline
    (i) global existence: $t_+(u_0) = \infty$; (ii) $\liminf_{t \to t_+(u_0)} \dist_{\rX_{\gamma,\mu}}(u(t),\del V_\mu) = 0$; (iii) $\lim_{t \to t_+(u_0)} u(t)$ does not exist in $\rX_{\gamma,\mu}$.
\end{enumerate}
\end{lem}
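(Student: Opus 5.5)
The lemma is the abstract local existence result of Pr\"uss and Simonett, so the plan is to reproduce their contraction argument in the time-weighted maximal regularity class rather than to use anything specific to the landfast ice model.

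\emph{Step 1: weighted linear theory.} By the theory of Pr\"uss and Simonett on time weights, maximal $\rL^p$-regularity of $A(u_0)$ on $\rX_0$ transfers to maximal $\rL^p_\mu$-regularity on every finite interval $(0,T)$. Concretely, for $f \in \rL^p_\mu(0,T;\rX_0)$ and $v_0 \in \rX_{\gamma,\mu}$, the problem $v' + A(u_0) v = f$, $v(0) = v_0$, has a unique solution $v \in \rW^{1,p}_\mu(0,T;\rX_0)\cap \rL^p_\mu(0,T;\rX_1)$. Moreover:
\begin{itemize}
\item the trace embedding $\rW^{1,p}_\mu \cap \rL^p_\mu \hookrightarrow \rBUC([0,T];\rX_{\gamma,\mu})$ holds;
\item on the subspace ${}_0\bE_{1,\mu}(0,T)$ of functions with vanishing trace at $t=0$, both the solution operator and this embedding have norms bounded uniformly for $T \le T_0$.
\end{itemize}

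\emph{Step 2: reference function and fixed-point map.} For $u_1 \in \oB_{\rX_{\gamma,\mu}}(u_0,r)$, let $u_1^*$ solve
\begin{equation*}
    (u_1^*)' + A(u_0)u_1^* = F(u_0), \qquad u_1^*(0) = u_1.
\end{equation*}
On the set
\begin{equation*}
    \cB_{T,\rho} \coloneqq \{ w \in \bE_{1,\mu}(0,T) : w(0)=u_1,\ \| w - u_1^* \|_{\bE_{1,\mu}(0,T)} \le \rho \}
\end{equation*}
I define $\Phi(w) \coloneqq u$, where $u$ solves
\begin{equation*}
    u' + A(u_0) u = F(w) + \bigl(A(u_0) - A(w)\bigr) w, \qquad u(0) = u_1.
\end{equation*}
Then $\Phi(w) - u_1^*$ has zero trace, so the uniform constants of Step 1 apply to it. For $w \in \cB_{T,\rho}$ I control $\sup_t \| w(t) - u_0 \|_{\rX_{\gamma,\mu}}$ by $r$, by $C\rho$, and by $\sup_t \| u_1^*(t) - u_1\|_{\rX_{\gamma,\mu}}$. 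The last quantity tends to $0$ as $T\to 0$ because $u_1^*$ is continuous. Hence $w(t)$ stays in a small ball around $u_0$ inside $V_\mu$.

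\emph{Step 3: self-map and contraction.} I estimate in $\rL^p_\mu(0,T;\rX_0)$ using the Lipschitz continuity of $(A,F)$.
\begin{itemize}
\item The quasilinear term splits as
\begin{equation*}
    (A(u_0)-A(w))w = (A(u_0)-A(w))(w-u_1^*) + (A(u_0)-A(w))u_1^*.
\end{equation*}
The first part is bounded by $L(r + C\rho + o(1))\rho$. The second is bounded by $L\cdot\sup_t\|w(t)-u_0\|_{\rX_{\gamma,\mu}}\cdot \| u_1^* \|_{\rL^p_\mu(0,T;\rX_1)}$, which tends to $0$ as $T \to 0$ by absolute continuity of the integral.
\item $F(w) - F(u_0)$ is bounded by $L$ times the sup-distance to $u_0$ times $T^{\mu - 1/p}$ (up to a constant), since it is bounded in $\rX_0$ on $(0,T)$.
\end{itemize}
Choosing first $\rho$ and $r$ small, then $T'$ small, makes $\Phi$ a strict contraction of $\cB_{T',\rho}$ into itself, and Banach's fixed point theorem gives the solution. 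Uniqueness in the whole class $\bE_{1,\mu}(0,T')$ then follows by a short-time argument near $t=0$ combined with a continuation argument.

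\emph{Step 4: Lipschitz dependence and regularization.} Lipschitz dependence on $u_1$ comes from subtracting the fixed-point identities for $u(\cdot,u_1)$ and $u(\cdot,u_2)$. The contraction absorbs the nonlinear differences, leaving $\|u_1^*-u_2^*\|_{\bE_{1,\mu}} \le C\|u_1 - u_2\|_{\rX_{\gamma,\mu}}$. For instantaneous regularization, on $(\tau,T')$ the weight $t^{1-\mu}$ is bounded above and below. Thus $u \in \bE_1(\tau,T')$, and the unweighted trace gives the stated continuity for $t \ge \tau$.

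\emph{Step 5: part (b), maximal interval.} I define $t_+(u_0)$ as the supremum of existence times. Suppose (i)--(iii) all fail. Then $u(t) \to u_\infty$ in $\rX_{\gamma,\mu}$ and $u_\infty$ has positive distance to $\del V_\mu$, so $u_\infty \in V_\mu$. Since $A(u_\infty)$ has maximal regularity by assumption, part (a) applied at $u_\infty$ yields $T'$ and $r$ valid for all initial values in $\oB(u_\infty,r)$. Restarting from $u(t_0)$ with $t_0 > t_+ - T'$ and $u(t_0) \in \oB(u_\infty,r)$ extends the solution past $t_+$, which is a contradiction.

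\emph{Main obstacle.} The delicate step is Step 3. The $\rX_1$-norm of the reference solution, and the non-uniformity from $u_1 \ne u_0$, must both be made small simultaneously. This is exactly what the zero-trace uniform constants and the $T\to 0$ smallness of $\| u_1^* \|_{\rL^p_\mu(0,T;\rX_1)}$ are needed for.
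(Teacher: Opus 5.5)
The paper gives no proof of this lemma; it cites Pr\"uss--Simonett (Thm.~5.1.1 and Cor.~5.1.2 of their monograph). Your Steps~1--5 follow that same contraction scheme, but one step does not deliver what the lemma claims: uniformity in the initial value. The statement requires a single $T'=T'(u_0)$ for \emph{all} $u_1\in\oB_{\rX_{\gamma,\mu}}(u_0,r)$. This is what makes the Lipschitz estimate live on a common interval, and your Step~5 relies on it when restarting near $u_\infty$.

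You invoke two smallness facts: $\sup_{t\le T}\|u_1^*(t)-u_1\|_{\rX_{\gamma,\mu}}\to 0$ ``because $u_1^*$ is continuous'', and $\|u_1^*\|_{\rL^p_\mu(0,T;\rX_1)}\to 0$ ``by absolute continuity''. Both hold for each fixed $u_1$, but neither is uniform on the non-compact ball. For example, let $A(u_0)$ be positive self-adjoint and let $u_1-u_0$ be an eigenvector with eigenvalue $\lambda$. Then for $\lambda T\gg 1$ one has $\|u_1^*-u_0^*\|_{\rL^p_\mu(0,T;\rX_1)}\gtrsim\|u_1-u_0\|_{\rX_{\gamma,\mu}}$, however small $T$ is. So as written your $T'$ depends on $u_1$. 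Your ``Main obstacle'' paragraph names this issue but does not resolve it. The zero-trace uniform constants do not help, because $u_1^*-u_0^*$ does not have zero trace.

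The standard remedy is to compare with one reference function $u_0^*$, the solution of $(u_0^*)'+A(u_0)u_0^*=F(u_0)$, $u_0^*(0)=u_0$. The difference $u_1^*-u_0^*$ solves the homogeneous problem with initial value $u_1-u_0$. So on a \emph{fixed} interval $(0,T_0)$ you get $\|u_1^*-u_0^*\|_{\bE_{1,\mu}(0,T_0)}+\sup_{t\le T_0}\|u_1^*(t)-u_0^*(t)\|_{\rX_{\gamma,\mu}}\le C_0\|u_1-u_0\|_{\rX_{\gamma,\mu}}$. Restricting to $(0,T)$ with $T\le T_0$ only decreases the left-hand side. Hence
$\sup_{t\le T}\|u_1^*(t)-u_1\|_{\rX_{\gamma,\mu}}\le\eta_0(T)+(1+C_0)r$ and $\|u_1^*\|_{\rL^p_\mu(0,T;\rX_1)}\le\eta_1(T)+C_0r$, where $\eta_0,\eta_1\to0$ depend on $u_0^*$ only.

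The order of choices then becomes: fix $\rho$, then take $T'$ and $r$ small depending on $\rho$. In particular $r$ must be small relative to $\rho$. The reason is that $(A(u_0)-A(w))u_1^*$ is, up to constants, of size $L(r+\rho+\eta_0)(\eta_1+r)$, and it must be a small multiple of $\rho$. With this, every estimate in Steps~2--4 is uniform in $u_1$.

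Two smaller slips:
\begin{enumerate}
\item For bounded $f$, $\|f\|_{\rL^p_\mu(0,T;\rX_0)}\le c\,T^{1-\mu+1/p}\sup_t\|f(t)\|_{\rX_0}$, not $T^{\mu-1/p}$. This is harmless, since both exponents are positive.
\item Your regularization argument gives $u\in\rC([\tau,T'];\rX_{\gamma,1})$, not continuity into $\rX_1$ as literally written in the lemma. The embedding $\bE_1(\tau,T')\hookrightarrow\rC([\tau,T'];\rX_1)$ is false in general and is a misprint; compare part~(b) of \autoref{thm:loc strong well-posedness}. So you should not claim the ``stated'' continuity.
\end{enumerate}
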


Next, we elaborate on the stability of equilibria to the abstract Cauchy problem \eqref{eq:quasilin ACP}.
In fact, we consider here the real interpolation space $\rX_\gamma \coloneqq \rX_{\gamma,1}$, along with an open subset $V \subset \rX_\gamma$.
By $\cE \subset V \cap \rX_1$, we then denote the set of equilibrium solutions to \eqref{eq:quasilin ACP}, i.e., $u \in \cE$ satisfies $A(u) u = F(u)$.
Upon assuming that $A$ and $F$ are Fr\'echet-differentiable, denoted by $(A,F) \in \rC^1(V;\cL(\rX_1,\rX_0) \times \rX_0)$, we invoke the total linearization of $A$ at an equilibrium $u_* \in \cE$.
It is given by 
\begin{equation}\label{eq:abstr tot lin}
    A_0 v \coloneqq A(u_*) v +(A'(u_*)v)u_* - F'(u_*)v \tfor v \in \rX_1.
\end{equation}

The subsequent generalized principle of linearized stability discusses the stability of equilibria.
We refer here to \cite[Thm.~5.3.1]{PS:16}, see also \cite{PSZ:09}.

\begin{lem}\label{lem:gen princ of lin stab}
Consider $p \in (1,\infty)$, and assume that $u_* \in V \cap \rX_1$ is an equilibrium solution to \eqref{eq:quasilin ACP}, and suppose that $(A,F) \in \rC^1(V;\cL(\rX_1,\rX_0) \times \rX_0)$.
Moreover, assume that $A(u_*)$ has maximal $\rL^p$-regularity on $\rX_0$, and recall the total linearization $A_0$ of $A$ around $u_*$ as made precise in \eqref{eq:abstr tot lin}.
The equilibrium is referred to as {\em normally stable} if
\begin{enumerate}[(a)]
    \item near $u_*$, the set of equilibria $\cE$ is a $\rC^1$-manifold in $\rX_1$ of dimension $m \in \bN$,
    \item the tangent space for $\cE$ at $u_*$ is isomorphic to $\rN(A_0)$, 
    \item zero is a semi-simple eigenvalue of $A_0$, that is, $\rN(A_0) \oplus \rR(A_0) = \rX_0$, and
    \item $\sigma(A_0) \setminus \{0\} \subset \bC_+$.
\end{enumerate}
Then the equilibrium $u_*$ is stable in $\rX_\gamma$, and there is $r > 0$ such that the unique solution $u(t)$ of \eqref{eq:quasilin ACP} with initial value $u_0 \in \rX_\gamma$ satisfying $\| u_0 - u_* \|_{\rX_\gamma} < r$ exists on $\bR_+$ and converges at an exponential rate in $\rX_\gamma$ to some $u_\infty \in \cE$ as $t \to \infty$.
\end{lem}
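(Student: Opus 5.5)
This lemma is the generalized principle of linearized stability, recalled from \cite[Thm.~5.3.1]{PS:16}; the plan reduces the quasilinear problem near $u_*$ to a fixed point problem on a suitable splitting of $\rX_0$.

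First I fix the splitting. Since zero is semi-simple by (c), the spectral projection $P^c$ onto $\rN(A_0)$ along $\rR(A_0)$ is well defined. This gives $\rX_0 = \rX_0^c \oplus \rX_0^s$ with $\rX_0^c = \rN(A_0)$ finite-dimensional, because $\cE$ is an $m$-dimensional manifold with tangent space $\rN(A_0)$. The part $A_0^s$ of $A_0$ in $\rX_0^s$ satisfies $\sigma(A_0^s) = \sigma(A_0)\setminus\{0\} \subset \bC_+$ by (d). It inherits maximal $\rL^p$-regularity from $A(u_*)$, since $A_0 - A(u_*)$ is a lower-order perturbation. Hence $A_0^s$ has maximal $\rL^p$-regularity on the half-line $\bR_+$, with an exponentially weighted version for some decay rate $\omega > 0$.

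Next I parametrize the equilibria. By (a) and (b), near $u_*$ the set $\cE$ is the graph of a $\rC^1$-map $\phi\colon B_{\rX_0^c}(0,\rho) \to \rX_1^s$ with $\phi(0) = 0$ and $\phi'(0) = 0$, obtained via the implicit function theorem applied to $P^s\bigl(A(u)u - F(u)\bigr) = 0$. I then write a solution as
\begin{equation*}
u = u_* + x + \phi(x) + z, \qquad x = P^c(u - u_*), \quad z \in \rX^s.
\end{equation*}
This turns the equation into an ODE for $x$ in $\rX_0^c$ coupled with a parabolic equation $z' + A_0^s z = R(x,z)$. The crucial structural point is that the nonlinearity $R$ vanishes when $z = 0$, because those points are equilibria. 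Consequently both $R$ and the right-hand side of the $x$-equation are bounded by $\eta\,|z|_{\rX_1}$, with $\eta$ small near $u_*$, using the $\rC^1$-regularity of $(A,F)$ together with $\phi'(0)=0$.

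With this structure, exponential decay of $z$ comes from maximal regularity of $A_0^s$ in exponentially weighted spaces $e^{-\omega t}\rL^p$ combined with a smallness and continuation argument; the local existence theory of \autoref{lem:loc wp quasilin ACP} supplies continuation. Since $x' = \cO(|z|)$ is integrable, $x(t)$ converges to some $x_\infty$ and stays small, which gives stability. Finally I set $u_\infty = u_* + x_\infty + \phi(x_\infty) \in \cE$. The main obstacle is this a priori estimate: one must control $\|z\|$ in the weighted $\bE_1$-norm uniformly in time while the coefficients $A(u)$ move. I would handle that by freezing at $u_*$ and treating $(A(u)-A(u_*))u$ as a small perturbation, which is possible since $\|u - u_*\|_{\rX_\gamma}$ remains small.
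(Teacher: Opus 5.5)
Your sketch is correct and is essentially the standard proof of the generalized principle of linearized stability from \cite{PSZ:09} and \cite[Thm.~5.3.1]{PS:16}; the paper does not prove this lemma and only cites those sources. The shared steps are the splitting along $\rN(A_0) \oplus \rR(A_0)$, the graph parametrization $u_* + x + \phi(x)$ of $\cE$, a normal form for $(x,z)$ whose nonlinearities vanish at $z = 0$ and are bounded by $\eta \| z \|_{\rX_1}$, exponentially weighted maximal regularity of $A_0^s$ on $\bR_+$, and an exit-time argument. The one step to state more carefully is the construction of $\phi$. The implicit function theorem applied to the $P^s$-component alone only gives $\cE \subset \mathrm{graph}(\phi)$ near $u_*$. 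You need the reverse inclusion, so that $z = 0$ really corresponds to equilibria. It follows from (a) and (b), either via $T_{u_*}\cE \subset \rN(A_0)$ together with the dimension count, or, as in \cite{PSZ:09}, by applying the inverse function theorem to $P^c$ restricted to $\cE$.
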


Below, we recall some theory regarding time-periodic problems.
First, we invoke the notion of maximal periodic $\rL^p$-regularity.
Assume that $\rX_0$ and $\rX_1$ are Banach spaces such that $\rX_1 \hookrightarrow \rX_0$ densely, and consider the generator $A \colon \rX_1 \subset \rX_0 \to \rX_0$ of a $\rC_0$-semigroup on $\rX_0$.
We say that $A$ has maximal periodic $\rL^p$-regularity if for all $f \in \rL^p(0,2\pi;\rX_0) \eqqcolon \bE_0$, there is a unique solution $u \in \rW^{1,p}(0,2 \pi;\rX_0) \cap \rL^p(0,2\pi;\rX_1)$ to the inhomogeneous periodic abstract Cauchy problem
\begin{equation*}
    \left\{
    \begin{aligned}
        u'(t) - A u(t)
        &= f(t), \tfor t \in (0,2\pi),\\
        u(0)
        &= u(2 \pi).
    \end{aligned}
    \right.
\end{equation*}
In this case, the closed graph theorem yields that there is a constant $C > 0$ with 
\begin{equation*}
    \| u \|_{\bE_1} \le C \cdot \| f \|_{\bE_0}.
\end{equation*}
The following result, which is due to Arendt and Bu \cite{AB:02}, provides a characterization of maximal $\rL^p$-regularity in terms of maximal $\rL^p$-regularity for initial value problems and a spectral condition on $A$.

\begin{lem}\label{lem:Arendt-Bu}
Let $\rX_0$ be a Banach space, and assume that $A \colon \rX_1 \to \rX_0$ generates a $\rC_0$-semigroup $(\mre^{t A})$.
Then~$A$ has maximal periodic $\rL^p$-regularity if and only if $A$ has maximal $\rL^p$-regularity, and $1 \in \rho(\mre^{2 \pi A})$.
\end{lem}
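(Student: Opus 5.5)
This is the Arendt--Bu theorem, and the paper recalls it from \cite{AB:02} without proof. If I were to prove it, I would use Fourier series on the period $2\pi$.

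\emph{Sufficiency.} Assume that $A$ has maximal $\rL^p$-regularity and that $1 \in \rho(\mre^{2\pi A})$. I would build the periodic solution from the Cauchy problem. Given $f \in \bE_0$, let $w$ be the maximal regularity solution of $w' - Aw = f$ with $w(0)=0$. Then $w(2\pi) = \int_0^{2\pi} \mre^{(2\pi-s)A} f(s) \srd s$ lies in the trace space $\rX_\gamma = (\rX_0,\rX_1)_{1-1/p,p}$. I would set
\begin{equation*}
    u_0 \coloneqq (1-\mre^{2\pi A})^{-1} w(2\pi)
\end{equation*}
and $u \coloneqq w + \mre^{\cdot A} u_0$.

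The periodicity $u(2\pi)=u(0)$ then follows from $u(2\pi) = w(2\pi) + \mre^{2\pi A}u_0 = u_0$. For $u$ to lie in $\bE_1$, the semigroup orbit $\mre^{\cdot A}u_0$ must belong to $\rW^{1,p}(0,2\pi;\rX_0)\cap \rL^p(0,2\pi;\rX_1)$, which holds exactly when $u_0 \in \rX_\gamma$. This is the main technical point. I would handle it by writing $u_0 = w(2\pi) + \mre^{2\pi A}(1-\mre^{2\pi A})^{-1}w(2\pi)$. The first summand is in $\rX_\gamma$. For the second, maximal regularity makes the semigroup analytic, so $\mre^{2\pi A}$ maps $\rX_0$ into $\rD(A)\subset \rX_\gamma$.

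Uniqueness follows from the same resolvent condition. The difference $z$ of two periodic solutions satisfies $z(t)=\mre^{tA}z(0)$ with $z(0)=\mre^{2\pi A}z(0)$, hence $z(0)=0$.

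\emph{Necessity.} Assume maximal periodic $\rL^p$-regularity. I would first test with $f(t)=\mre^{\mri k t}x$ for $k\in\mathbb{Z}$, $x \in \rX_0$. Uniqueness forces the solution to be $u(t) = \mre^{\mri kt}(\mri k - A)^{-1}x$, so $\mri\mathbb{Z}\subset\rho(A)$. I would then show $1\in\rho(\mre^{2\pi A})$ by solving $x = (1-\mre^{2\pi A})y$ explicitly. Take $f(t)=\mre^{tA}x/(2\pi)$ and let $u$ be its periodic solution. By variation of constants, $u(2\pi) = \mre^{2\pi A}u(0) + \mre^{2\pi A}x$, and periodicity turns this into $(1-\mre^{2\pi A})u(0) = \mre^{2\pi A}x$. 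This gives surjectivity of $1-\mre^{2\pi A}$ up to bookkeeping. Injectivity comes from uniqueness of periodic solutions with $f=0$.

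Next I would show maximal regularity for the initial value problem with $u(0)=0$. Let $v$ be the periodic solution for $f$ and subtract the orbit $\mre^{\cdot A}v(0)$, which lies in $\bE_1$ because $v(0)\in\rX_\gamma$. This handles the interval $(0,2\pi)$, and I would extend to arbitrary finite intervals by rescaling the period.

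I expect the main obstacle to be the trace-space regularity of $u_0$ in the sufficiency direction. Relying on analyticity of the semigroup, itself a consequence of maximal regularity, is the cleanest route I see.
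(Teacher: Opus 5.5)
Your sufficiency argument is sound, granted Dore's theorem that maximal regularity forces $A$ to generate an analytic semigroup. Your computation $(1-\mre^{2\pi A})(x+u(0))=x$ for surjectivity is also correct. (The paper gives no proof of this lemma and simply cites \cite{AB:02}.)

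The gap is in the necessity direction, at the step ``subtract the orbit $\mre^{\cdot A}v(0)$, which lies in $\bE_1$ because $v(0)\in\rX_\gamma$''. The implication $x\in\rX_\gamma\Rightarrow \mre^{\cdot A}x\in\rW^{1,p}(0,2\pi;\rX_0)\cap\rL^p(0,2\pi;\rX_1)$ holds for analytic semigroups but fails for general $\rC_0$-semigroups. For the translation group on $\rL^q(\bR)$, one has $\mre^{tA}x\in\rD(A)=\rW^{1,q}(\bR)$ only if $x\in\rW^{1,q}(\bR)$, whereas $\rX_\gamma=\rB^{1-1/p}_{qp}(\bR)$ is strictly larger. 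In the necessity direction you only know that $A$ generates a $\rC_0$-semigroup and has maximal periodic regularity. Analyticity is exactly what maximal regularity would give you, so the step is circular as written. So the real obstacle lies here, not in the sufficiency direction as your closing remark suggests.

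The same issue affects, in milder form, your injectivity claim for $1-\mre^{2\pi A}$. If $y=\mre^{2\pi A}y$ with $y\in\rX_0$ only, the orbit $\mre^{\cdot A}y$ need not lie in $\bE_1$, so uniqueness cannot be invoked directly. This is easily repaired: $z\coloneqq(\lambda-A)^{-1}y$ is again a fixed point, and its orbit lies in $\rC^1([0,2\pi];\rX_0)\cap\rC([0,2\pi];\rX_1)\subset\bE_1$. Hence $z=0$, and so $y=0$.

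To close the main gap you have two options.
\begin{enumerate}
\item[(a)] Make your $\mri\mathbb{Z}$-step quantitative. The closed-graph bound $\|u\|_{\bE_1}\le C\|f\|_{\bE_0}$ applied to $f=\mre^{\mri k\cdot}x$ gives $\sup_{k\in\mathbb{Z}}\|k(\mri k-A)^{-1}\|<\infty$. A Neumann series extends this to $\|(\lambda-A)^{-1}\|\lesssim|\im\lambda|^{-1}$ on vertical strips for large $|\im\lambda|$. Together with the Hille--Yosida bounds this yields analyticity of $(\mre^{tA})$, after which your argument goes through.
\item[(b)] Avoid analyticity by placing the forcing in the second half of the period. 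For $f\in\rL^p(0,\pi;\rX_0)$, let $v$ be the periodic solution for $g=0$ on $(0,\pi)$ and $g(t)=f(t-\pi)$ on $(\pi,2\pi)$.
\begin{itemize}
\item On $(0,\pi)$ you have $v(t)=\mre^{tA}v(0)$, so this orbit lies in $\bE_1(0,\pi)$.
\item Since $\mre^{\pi A}$ is bounded on $\rX_0$ and on $\rX_1$ and commutes with $A$, also $\mre^{(\cdot+\pi)A}v(0)=\mre^{\pi A}v(\cdot)\in\bE_1(0,\pi)$.
\item Therefore $w(\tau)\coloneqq v(\tau+\pi)-\mre^{(\tau+\pi)A}v(0)=\int_0^\tau\mre^{(\tau-s)A}f(s)\srd s$ is the $\bE_1$-solution of the Cauchy problem on $(0,\pi)$ with $w(0)=0$, and $\|w\|_{\bE_1(0,\pi)}\le C\|f\|_{\rL^p(0,\pi;\rX_0)}$.
\end{itemize}
Maximal regularity on one bounded interval then yields it on every bounded interval. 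Rescaling the period, as you suggest, does not do this by itself, since it replaces $A$ by a multiple of $A$.
\end{enumerate}
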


By a rescaling argument, the Arendt-Bu theorem remains valid for time intervals of the form $(0,T)$, where $T > 0$.
We also observe that the spectral mapping theorem for generators of analytic semigroups, see, e.g., \cite[Cor.IV.3.12]{EN:00}, shows that the condition $1 \in \rho(\mre^{2 \pi A})$ is equivalent with $0 \in \rho(A)$. 

Below, we discuss the solvability of time-periodic quasilinear abstract Cauchy problems of the form
\begin{equation}\label{eq:time-per quasilin ACP}
    \left\{
    \begin{aligned}
        u'(t) + A(u(t)) u(t)
        &= F(t,u(t)), \tfor t \in (0,T),\\
        u(0) 
        &= u(T),
    \end{aligned}
    \right.
\end{equation}
where for $V \subset \rX_\gamma = (\rX_0,\rX_1)_{1-\nicefrac{1}{p},p}$ open, we suppose that $A \colon V \to \cL(\rX_1,\rX_0)$ and $F \colon [0,T] \times V \to \rX_0$.

We make the following assumptions on the nonlinearities $A$ and $F$, and on the linearization $A(0)$.

\begin{asu}\label{ass:time-per quasilin}
Consider $r_0 > 0$ such that for every $u \in \oB_{\bE_1}(0,r_0)$, it holds that $u(t) \in V$ for all $t \in [0,T]$.
Moreover, suppose that the following are valid.
\begin{enumerate}[(a)]
    \item The operators $A \colon V \to \cL(\rX_1,\rX_0)$ are a family of closed linear operators, and for all $r \in (0,r_0)$, there is $L(r) > 0$ with $\| A(u_1) - A(u_2) \|_{\bE_0} \le L(r) \cdot \| u_1 - u_2 \|_{\bE_1} \cdot \| v \|_{\bE_1}$ for all $u_1$, $u_2 \in \oB_{\bE_1}(0,r)$ and $v \in \bE_1$.
    \item The right-hand side $F \colon [0,T] \times V \to \rX_0$ satisfies $F(\cdot,u(\cdot)) \in \bE_0$ for all $u \in \oB_{\bE_1}(0,r)$, and for each $r \in (0,r_0)$, there exists $C(r)$ such that $\| F(\cdot,u_1(\cdot)) - F(\cdot,u_2(\cdot)) \|_{\bE_0} \le C(r) \cdot \| u_1 - u_2 \|_{\bE_1}$ for all $u_1$, $u_2 \in \oB_{\bE_1}(0,r)$.
    Also, assume that $F(0,u) = F(T,u)$ for all $u \in \oB_{\bE_1}(0,r)$.
    \item For all $p \in (1,\infty)$, the operator $A_0 \coloneqq A(0)$ satisfies $0 \in \rho(0)$, and $A_0$ has maximal $\rL^p$-regularity.
\end{enumerate}
\end{asu}

The following quasilinear existence result can be obtained in a similar way as the quasilinear version of the Arendt-Bu theorem from \cite[Thm.~3.3]{HS:20}, see also \cite[Cor.~7.1.4]{Bra:24}.

\begin{lem}\label{lem:quasilin time per result}
Suppose that \autoref{ass:time-per quasilin} is fulfilled.
Moreover, assume that for some $r \in (0,r_0)$, the constant $C(r) > 0$ satisfies $C(r) < \delta_1$ for $\delta_1 > 0$ sufficiently small.

Then there are $r' \in (0,r)$ and $\delta_2 = \delta_2(r') > 0$ so that if $\| F(\cdot,0) \|_{\bE_0} < \delta_2$, there exists a solution $u \in \oB_{\bE_1}(0,r')$ to \eqref{eq:time-per quasilin ACP}.
Moreover, $u$ is unique in $\oB_{\bE_1}(0,r')$.
\end{lem}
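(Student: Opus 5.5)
The statement is the abstract quasilinear time-periodic existence result, \autoref{lem:quasilin time per result}. I would prove it by a contraction argument built on the maximal periodic $\rL^p$-regularity of $A_0 = A(0)$. By \autoref{ass:time-per quasilin}(c), $A_0$ has maximal $\rL^p$-regularity and $0 \in \rho(A_0)$. Since maximal regularity gives analyticity of the semigroup, the spectral mapping remark after \autoref{lem:Arendt-Bu} yields $1 \in \rho(\mre^{-T A_0})$. The rescaled Arendt-Bu theorem then provides a bounded solution operator $\cL_\per \colon \bE_0 \to \bE_1$: it maps $f$ to the unique $T$-periodic solution of $u' + A_0 u = f$, with $\| \cL_\per f \|_{\bE_1} \le M \| f \|_{\bE_0}$.

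Next I would rewrite the problem as the fixed point equation
\begin{equation*}
    u = \Phi(u) \coloneqq \cL_\per\bigl[(A_0 - A(u))u + F(\cdot,u)\bigr],
\end{equation*}
considered on the closed ball $\oB_{\bE_1}(0,r')$ of the space of functions $u \in \bE_1$ with $u(0) = u(T)$. This space is closed, because $\bE_1 \hookrightarrow \rBUC([0,T];\rX_\gamma)$. For $u$ in this ball, $u(t) \in V$ holds by the choice of $r_0$. A fixed point is exactly a solution of \eqref{eq:time-per quasilin ACP}. The periodicity $F(0,u) = F(T,u)$ is what makes the $T$-periodic extension consistent.

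\textbf{Self-map estimate.} Using \autoref{ass:time-per quasilin}(a) with $u_2 = 0$, I get $\| (A_0 - A(u))u \|_{\bE_0} \le L(r) \| u \|_{\bE_1}^2 \le L(r) r'^2$. Using (b), $\| F(\cdot,u) \|_{\bE_0} \le \| F(\cdot,0) \|_{\bE_0} + C(r) r' < \delta_2 + C(r) r'$. Hence
\begin{equation*}
    \| \Phi(u) \|_{\bE_1} \le M\bigl(L(r) r'^2 + C(r) r' + \delta_2\bigr).
\end{equation*}

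\textbf{Contraction estimate.} I split
\begin{equation*}
    (A_0 - A(u_1))u_1 - (A_0 - A(u_2))u_2 = (A_0 - A(u_1))(u_1 - u_2) + (A(u_2) - A(u_1))u_2 .
\end{equation*}
Applying (a) to each piece gives
\begin{equation*}
    \| \Phi(u_1) - \Phi(u_2) \|_{\bE_1} \le M\bigl(2 L(r) r' + C(r)\bigr) \| u_1 - u_2 \|_{\bE_1}.
\end{equation*}

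\textbf{Choice of constants (main obstacle).} The constants must be chosen in the right order. First I would require $\delta_1 \le 1/(4M)$, so that $M C(r) < M\delta_1 \le \frac14$. This is exactly the hypothesis $C(r) < \delta_1$. The point is that $C(r)$ need not vanish as $r' \to 0$, since $F$ is only Lipschitz. Next I choose $r' < r$ with $M L(r) r' \le \frac18$; the contraction constant is then at most $\frac12$. Finally I take $\delta_2 = r'/(2M)$, which makes the self-map bound at most $r'\bigl(\frac18 + \frac14\bigr) + \frac{r'}{2} < r'$. Banach's fixed point theorem then gives existence and uniqueness of the fixed point in $\oB_{\bE_1}(0,r')$.

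One subtlety remains: I must confirm that $A(u)u$ is measurable and lies in $\bE_0$ for $u \in \bE_1$. I would get this from (a) together with $A(0) \in \cL(\rX_1,\rX_0)$.
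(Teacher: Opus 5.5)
Your proof is correct and follows essentially the route the paper intends when it defers to the quasilinear Arendt--Bu theorem of \cite[Thm.~3.3]{HS:20}: a contraction on a small ball of $\bE_1$ built from the bounded periodic solution operator of $A_0$, with $C(r)<\delta_1$ compensating for the fact that the Lipschitz constant of $F$ need not vanish, and with $r'$ and then $\delta_2$ chosen in the order you describe. The one step to sharpen is $0\in\rho(A_0)\Rightarrow 1\in\rho(\mre^{-TA_0})$: by spectral mapping this actually requires $\frac{2\pi\mri k}{T}\in\rho(A_0)$ for all $k\in\mathbb{Z}$, not just $k=0$, and this holds because maximal $\rL^p$-regularity on $\bR_+$ (the sense used in the paper) makes $(\mre^{-tA_0})$ a bounded analytic semigroup, so together with $0\in\rho(A_0)$ it is exponentially stable and the spectral radius of $\mre^{-TA_0}$ is less than one.
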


\end{document}